\newcommand{\vep}{\varepsilon}
\newcommand{\R}{\mathbb R}
\newcommand{\CC}{\mathbb C}
\definecolor{HW}{rgb}{1,0,0}
\definecolor{HW1}{rgb}{0,0,1}
\numberwithin{equation}{section}
\numberwithin{figure}{section}
\numberwithin{table}{section}
\newtheorem{theorem}{Theorem}[section]
\newtheorem{lemma}{Lemma}[section]
\newtheorem{remark}{Remark}[section]
\newtheorem{corollary}{Corollary}[section]
\newtheorem{definition}{Definition}
\title{High Contrast Transmission and Fabry-P\'erot-type Resonances}
\author{Long Li \thanks{{RICAM, Austrian Academy of Sciences, A-4040, Linz, Austria (long.li@ricam.oeaw.ac.at)}} \; and Mourad Sini \thanks{{RICAM, Austrian Academy of Sciences, A-4040, Linz, Austria (mourad.sini@oeaw.ac.at)}}}
\date{}
\begin{document}
\maketitle
\begin{abstract}

\noindent It is well known, in the acoustic model, that highly contrasting transmission leads to the so-called Minnaert subwavelength resonance. In this work, we show that such highly contrasting transmissions create not only one resonance but a family of infinite resonances located near the real axis where the first one (i.e. the smallest) is indeed the Minnaert one. This family of resonances are the shifts (in the lower complex plan) of the Neumann eigenvalues of the Laplacian. The well known Minneart resonance is nothing but the shift of the trivial (zero) Neumann eigenvalue of the bubble. These resonances, other than the Minnaert ones, are Fabry-P\'erot-type resonances as the generated total fields, in the bubble, are dominated by a linear combination of the Neumann eigenfunctions which, in particular, might create interferences. In addition, we establish the following properties.
\begin{enumerate}
\item We derive the asymptotic expansions, at the second order, of this family of resonances in terms of the contrasting coefficient.

\item In the time-harmonic regime, we derive the  resolvent estimates of the related Hamiltonian and the asymptotics of scattered fields that are uniform in the whole space, highlighting the contributions from this sequence of resonances. 

\item In the time domain regime, we derive the time behavior of the acoustic microresonator at large time-scales inversely proportional to powers of microresonator's radius.

\item The analysis shows that near Fabry-P\'erot resonances, the mircoresonator exhibits pronounced anisotropy. We believe that such a feature may pave the way for designing anisotropic metamaterials from simple configurations of a single microresonator.

\end{enumerate}

\noindent \textbf{Keywords:} acoustic resonators, Fabry-P\'erot resonances, resolvent estimates, large time behavior, asymptotic estimates.
\end{abstract}

\tableofcontents

\section{Introduction}

Let $\Omega \subset \mathbb R^3$ be an open bounded and connected domain. Given a positive real number $\tau$, consider the following Hamiltonian $H_{\rho_\tau,k_\tau}(\Omega)$
\begin{align} 
\left(H_{\rho_\tau, k_\tau}(\Omega)\right) u := -{k_\tau}\nabla \cdot \frac{1}{\rho_\tau} \nabla u \notag
\end{align}
with the domain 
\begin{align}
D(H_{\rho_\tau,k_\tau}(\Omega)):= \bigg\{u \in H^1(\R^3),\; k_\tau \nabla\cdot \frac{1}{\rho_\tau}\nabla u \in L^2(\R^3) \bigg\}, \notag
\end{align}
where $\rho_\tau$ and $k_\tau$ are defined by
\begin{align} \label{eq:0} 
\rho_\tau(x) := 
\begin{cases}
\rho_0,    &  x \in \R^3 \backslash \Omega, \\
{\rho_1}\tau,  & x \in \Omega,
\end{cases}
\quad {\rm{and}} \quad
k_\tau(x):=
\begin{cases}
k_0,       &  x \in \R^3 \backslash \Omega, \\
k_1\tau,   & x \in \Omega,
\end{cases} 
\end{align}
respectively. Here, $\rho_0, k_0, \rho_1$ and $k_1$ are all positive real numbers. 
It is well-established that, for each fixed $\tau$, the Hamiltonian $H_{\rho_\tau, k_\tau}(\Omega): \mathcal H \rightarrow \mathcal H$ with a domain $\mathcal D(H_{\rho_\tau,k_\tau}(\Omega)) \subset \mathcal H$ is a black box Hamiltonian (see Lemma 2.3 and Remark 2.4 in \cite{LSW} for more details). Here, $\mathcal H$ is defined by
\begin{align*}
&\mathcal H := \left\{u \in L^2(\R^3): \int_{\R^3} (k_\tau (x))^{-1} |u(x)|^2 dx < +\infty\right\}.
\end{align*}
For simplicity, we denote $H_{\rho_\tau,k_\tau}(\Omega)$ by $H_{\rho_\tau,k_\tau}$. Therefore, the resolvent of the Hamiltonian $H_{\rho_\tau, k_\tau}$, defined as 
\begin{align}
R_{H_{\rho_\tau, k_\tau}}(z):= \left(H_{\rho_\tau,k_\tau} - z^2 \right)^{-1}, \notag
\end{align}
is a meromorphic family of operators mapping from $\mathcal H_{\rm{comp}}$
to $\mathcal D_{\rm{loc}}(H_{\rho_\tau,k_\tau})$ (see Theorem 4.4 in \cite{DM}), where the spaces $\mathcal H_{\rm{comp}}$ and $\mathcal D_{\rm{loc}}(H_{\rho_\tau,k_\tau})$ are defined by \eqref{eq:164} and \eqref{eq:165}, respectively. This naturally leads to the definition of the scattering resonance, defined as a pole of the meromorphic extension of $R_{H_{\rho_\tau,k_\tau}}(z)$. The above Hamiltonian $H_{\rho_\tau, k_\tau}$ is associated with the acoustic transmission problem, in which the wave speeds inside and outside the penetrable obstacle $\Omega$ are denoted by 
\begin{align} \label{eq:41}
c_1:=\sqrt{k_1/\rho_1}\; \mathrm{and}\; c_0:=\sqrt{k_0/\rho_0},
\end{align}
respectively. When the acoustic frequency approaches the set of scattering resonances, $\Omega$ acts as an effective resonator.

There is a considerable literature on the distribution of the scattering resonance of the Hamiltonian $H_{\rho_\tau, k_\vep}$. In \cite{PV-99}, the authors proved that, when $c_1 > c_0$ and $\Omega$ is $C^{\infty}$ and convex with strictly positive curvature, there exists a resonance-free region near the real axis. This result subsequently was improved in \cite{CPV-99} to a resonance-free strip beneath the real axis. In contrast, when $c_1 < c_0$, the authors of \cite{P-V:1919} showed that there exists a sequence of scattering resonances super-algebraically close to the real axis. Further results on the location and asymptotics of the resonances, in the case where $\Omega$ is $C^{\infty}$ and convex with strictly positive curvature and for both wave speed configurations, were obtained in \cite{CPV-01}. Later, sharp bounds on the location of the resonances under the same geometric assumption were established in \cite{J19}. For the case when $\Omega$ is star-shaped Lipschitz, \cite{M-S:19} verified the existence of resonance-free strip beneath the real axis, assuming certain natural conditions on the ratio of wave speeds.

The current work focuses on the regime in which the coefficients of the Hamiltonian $H_{\rho_\tau, k_\tau}$, namely $\rho_\tau$ and $k_\tau$, exhibit high-contrast property (specifically, $\tau$ is sufficiently small). Our work is motivated by the application involving acoustically driven gas bubbles in liquids, where the mass density and bulk modulus inside the gas bubbles display high contrast compared to the surrounding fluids. It is widely known that such high contrasts give rise to a very low frequency (subwavelength) resonance called the Minnaert resonance, present across all configurations of wave speeds. This resonant phenomenon was first observed by Minnaert in \cite{Mi}, while its rigorous mathematical foundation - including the existence and asymptotic behavior for arbitrarily shaped bubbles - was established in \cite{AZ18}. For extensions to configurations involving multiple bubbles, we refer to \cite{ADHO, ACCS,FH}. Owing to their resonant behavior at subwavelength scales, gas bubbles can be used as agents to facilitate imaging \cite{C-06, DGS-21,FSLT, SS-2024} and also serve as fundamental building blocks in the design of dispersive metamaterials \cite{AFGLZ-17, AFZ-17, AZ-17, LPLL, LBFT, MS-241,MS-242} that support wave phenomena such as band-gap formation, negative refraction, and subwavelength focusing. In the previous works \cite{ADHO, ACCS, AZ18, FH}, the Minnaert resonance is shown to be a frequency for which the corresponding system of integral equations is non injective. A natural question was whether this frequency is indeed a mathematical resonance in the sense that it is a pole of the resolvent of its natural Hamiltonian. A first attempt in this direction was proposed in \cite{MPS}, where the authors stated the acoustic propagator as a frequency-dependent Schr\"{o}dinger operator for which they prove that the related resolvents display a drastically different behavior when the frequency is near or away from the Minnaert one. Recently the authors of \cite{LS-04} showed that the Minnaert resonance is indeed one of the scattering resonance of the natural Hamiltonian $H_{\rho_\tau,k_\tau}$ confirming the resonant character of the Minnaert frequency both physically (i.e., as a frequency near which enhancement of the generated fields) and mathematically (as a pole of the related Hamiltonian).
\bigskip

The main contributions of the current work are outlined as follows:
\begin{enumerate}
\item
Our first key result demonstrates that high contrast gives rise not only to the Minnaert resonance but also to a family of scattering resonances of the Hamiltonian $H_{\rho_\tau, k_\tau}$ near the real axis, uniformly across all wave speed configurations. These scattering resonances, apart from the Minnaert ones, can be regarded as Fabry-P\'erot-type resonances. \footnote{This is a natural designation as the generated total fields, in the bubble, are dominated by a linear combination of the Neumann eigenfunctions which hence might create interferences, see formula (\ref{eq:115}) in Theorem \ref{th:5}, a reminiscent property of the classical Fabry-P\'erot resonators modeled by two reflecting mirrors, see \cite{BW-99, r-17, Vaughan-1989} and references therein.} Specifically, we show that, in the high-contrast regime, each Neumann eigenvalue of the Laplacian on $\Omega$, where $\Gamma$ is $C^{1,1}\text{-smooth}$, splits into at most as many scattering resonances as the dimension of its associated Neumann eigenspace (see Theorem \ref{th:1}). Notably, the Minnaert resonance is associated with the first Neumann eigenvalue (which is $0$). Moreover, we derive the asymptotic expansion of all  Fabry-P\'erot resonances, which are the shifts of the original Neumann eigenvaule into the lower complex plane, with imaginary parts asymptotically proportional to $\tau$, that is $\Theta (\tau)$ (see formula \eqref{eq:38} in Theorem \ref{th:2}). See also Figure \ref{fi1} for the distribution of the  Fabry-P\'erot resonances. The aforementioned results are established by combining the Lippmann-Schwinger equation and the variational method, where the former provides the existence, uniqueness and estimates on the number of the  Fabry-P\'erot resonances, while the latter contributes to their asymptotic analysis.
It should be noted that Fabry-P\'erot resonances are present in all wave speed configurations. This does not contradict the findings of \cite{CPV-99}, where a resonance-free beneath the real axis was identified in the case of $c_1 > c_0$; rather, our result suggests that the width of this strip depends on the contrast, and in the high-contrast regime, it is $O(\tau)$. 

\begin{figure}[htbp]
\centering
\includegraphics{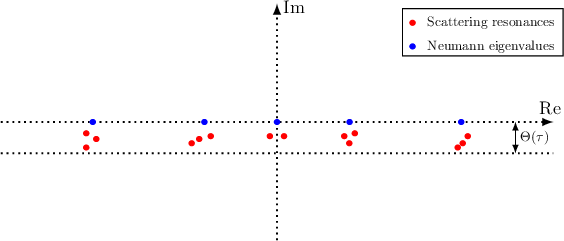}
\caption{Neumann eigenvalues and their associated scattering resonances}\label{fi1}
\end{figure}

\item Our second main result concerns the uniform asymptotics in the whole space for the resolvent estimates of the Hamiltonian $H_{\rho_\tau, k_\tau}$ and for the scattered fields of the associated scattering problem in the time-harmonic regime, emphasizing the contributions from the  Fabry-P\'erot resonances. More precisely, we prove that the resolvent exhibits an enhancement of $\tau^{-1}$ when the real frequency is close to each nonzero Neumann eigenvalue (see Theorem \ref{th:3}). Furthermore, we prove that the scattered field undergoes a transition from an asymptotically trivial (i.e., simpler) limit to a nontrivial one as $\tau \rightarrow 0$ when the incident frequency approaches the nonzero Neumann eigenvalue (see Theorem \ref{th:5}). Here, we consider a single resonator. Handling multiple resonators is interesting with important applications in mind, For this, it should be noted that since the eigenmodes remain approximately constant within the resonators, the calculation of Minnaert resonances for a system of multiple resonators can be performed using the capacitance matrix formalism, see \cite{ADHO, FH}. This provides a discrete approximation of the problem. However, for Fabry-P\'erot resonances, the eigenfunctions exhibit oscillatory behavior within the resonators, see Theorem \ref{th:4} and Theorem \ref{th:6}, making it difficult to extend such a discrete approximation. Such properties are related, in particular, to the high frequency homogenization issues that we intend to consider next.

\item Finally, we consider the microresonator regime, where the size of the resonator $\Omega$ is small while the contrast remains high. In this regime considered here, all  Fabry-P\'erot resonances associated with the nonzero Neumann eigenvalues only contribute at high frequencies, and are therefore referred to as high frequency resonances. Based on scaling transformations and the results previously obtained for fixed-size resonators, we characterize the high-frequency asymptotics of the resolvent and the associated scattered fields corresponding to the microresonator, uniformly throughout the whole space, when both the size of the resonator and the contrast are small (see Theorems \ref{th:6} and \ref{th:4}). The dominating parts, for both the resolvent operator and the scattered field, are characteristic of the highly oscillatory point-scatterer supported at the location of the microresonator, modulated by the far-field pattern of the exterior Dirichlet scattering problem (see formulas \eqref{eq:45} and \eqref{eq:5}). This reveals that, unlike in the Minnaert frequency case - where the asymptotic behavior is isotropic \cite{LS-04,MPS} -each mircoresonator at high frequencies exhibits pronounced anisotropy, see Remark \ref{Anisotropic-small-scatterers} for more details on this notion of anisotropic small scatterers. We believe that such a feature may pave the way for designing anisotropic metamaterials from simple configurations of a single microresonator. Moreover, we derive the time behavior of the acoustic microresonator driven by a causal source with compact support in both time and space (see Theorem \ref{th:7}). Notably, all  Fabry-P\'erot resonances, other than the Minnaert resonances, must tend to infinity as the microresonator's size shrinks to zero. Therefore, for large times that remain within the Minnaert resonance lifetime, the wave evolution in this setting is asymptotically dominated by the Minnaert resonant oscillation, while the amplitude is governed by the associated projection. The precise asymptotic characterization of the Minnaert resonant projection (see Lemma \ref{le:6}) is highly nontrivial due to the fact that the Hamiltonian $H_{\rho_\tau, k_\tau}$ is an unbounded operator. To overcome this difficulty, we show that the Minnaert resonance is simple and make use of the self-adjointness of $H_{\rho_\tau, k_\tau}$ in the space $\mathcal H$. Let us cite the series of works \cite{APL-22,BMV-21, LS-042,MP,Mukh-Si:2023} on wave dynamics of various type of microresonators, where a similar phenomenon-namely, the dominance of subwavelength resonances- was also observed. In comparison, our contribution differs in two key aspects. First, our estimate can be valid for arbitrarily large time scales, specifically of order inverse powers of the microresonator's size, with the exponent depending on the smoothness of the source. This extends beyond the time regimes considered in \cite{LS-042,MP,Mukh-Si:2023}). Second, in contrast to \cite{APL-22,BMV-21}, we analyze the original wave field governed by the wave equation, rather than the truncated inverse Fourier transform of the wave field.
\end{enumerate}

The remaining part of this work is divided as follows. In section \ref{sec:2}, we introduce the basic notations and preliminary results regarding the properties of the scattering resonances. In section \ref{sec:3}, we establish existence and uniqueness of the  Fabry-P\'erot resonances. In section \ref{sec:4} and \ref{sec:5}, we provide a detailed asymptotic analysis for the extended resonator and the microresonator, respectively. Appendix \ref{sec:a} contains auxiliary results and the proofs of certain technical lemmas supporting the main theorems.

\section{Preliminaries} \label{sec:2}
In this section, we begin with basic definitions, followed by the key results on the properties of scattering resonances, both of which will be used throughout the paper.

\subsection{Notations}
Given a domain $\Omega$ whose boundary $\Gamma$ is $C^{1,1}\text{-smooth}$, for each $z\in \CC$, we define the operators
\begin{align*} 
&SL_{\Gamma, z}: H^{-\frac12}(\Gamma) \rightarrow H_{\textrm{loc}}^{2}(\R^3 \backslash \Gamma),  \;\; \left(SL_{\Gamma, z}\phi\right)(x) := \int_{\Gamma} \frac{e^{iz|x-y|}}{4\pi|x-y|}\phi(y)d\sigma(y), \;\; x \in \R^3\backslash \Gamma,\\
&DL_{\Gamma, z}: H^{\frac12}(\Gamma) \rightarrow H_{\textrm{loc}}^{2}(\R^3 \backslash \Gamma),  \;\; \left(SL_{\Gamma, z}\phi\right)(x) := \int_{\Gamma} \partial_{\nu_y}\frac{e^{iz|x-y|}}{4\pi|x-y|}\phi(y)d\sigma(y), \;\; x \in \R^3\backslash \Gamma,\\
&S_{\Gamma, z}: H^{-\frac 12}(\partial  \Omega)\rightarrow H^{\frac12}(\partial  \Omega), \quad \left(S_{\Gamma, z}\phi\right)(x) := \int_{\Gamma} \frac{e^{iz|x-y|}}{4\pi|x-y|}\phi(y)d \sigma(y),\quad x\in \Gamma, \\
&N_{\Omega,z}: L^2(\Omega) \rightarrow H_{\textrm{loc}}^2(\R^3), \quad \left(N_{\Omega, z}\phi\right)(x) := \int_{\Omega}\frac{e^{iz|x-y|}}{4\pi|x-y|}\phi(y)dy, \quad x\in \R^3,\\
&K_{\Gamma,z}^*: H^{-\frac12}(\Gamma) \rightarrow H^{\frac 12}(\Gamma), \quad \left(K_{\Gamma, z}^{*}\phi\right)(x) := \partial_{\nu_x}\int_{\Gamma} \frac{e^{iz|x-y|}}{4\pi|x-y|}\phi(y)d\sigma(y),\quad x\in \Gamma.
\end{align*}
Here, $\nu$ is the outward unit normal to the boundary $\Gamma:= \partial \Omega$. 
For two Banach spaces $X$ and $Y$, denote the space of all linear bounded mapping from $X$ to $Y$ by $\mathcal L(X,Y)$. For simplicity, $\mathcal L(X,X)$ is also denoted by $\mathcal L(X)$. The kernel space and the range of the operator $B \in \mathcal L(X,Y)$ are defined by $\rm{Ker}(B)$ and $\rm{Ran}(B)$, respectively. A linear operator $\mathcal P: X \rightarrow X$ is called a projection if it satisfies: $P^2 = P$, where $\mathcal P^2$ denotes the composition of $P$ with itself. We denote the rank of the projection $P$ by $\rm{dim}\left(\rm{Ran}\left(\mathcal P\right)\right)$. Furthermore, we denote by $\gamma$ the operator that maps a function onto its Dirichlet trace. It is well established that the trace operator $\gamma$ satisfies, {up to a positive bound $C_\Omega$,}
\begin{align*}
\|\gamma \phi\|_{H^{s-\frac 12}(\Gamma)} \le C_{\Omega}\|\phi\|_{H^s(\Omega)}, \quad s>\frac 12.
\end{align*}
Define
\begin{align} 
&\mathcal H_{\textrm{comp}}:= \{\phi\in \mathcal H: \phi|_{\R^3\backslash B_{r_0}} \in  L^2_{\textrm{comp}}(\R^3 \backslash B_{r_0})\}, \label{eq:164}\\
&\textrm{and}\;\mathcal D_{\rm{loc}}(H_{\rho_\tau,k_\tau}):=\notag\\
&\{\phi \in \mathcal H: \phi |_{\R^3\backslash B_{r_0}} \in  L^2_{\textrm{loc}}(\R^3 \backslash B_{r_0})\; \textrm{and}\; \chi \phi \in \mathcal D(H_{\rho_\tau,k_\tau}) \; \textrm{if}\; \chi \in C_c^{\infty}(\R^3)\; \textrm{and}\; \chi|_{B_{r_0}} =1\}. \label{eq:165}
\end{align}
Here $B_{r_0}:=\{x\in \R^3: |x|< r_0\}$ with $r_0>0$ chosen to be large enough such that $\overline{\Omega} \subset B_{r_0}$. 
For $z\in \CC$, define
\begin{align}\label{eq:117}
R_z: L_{\rm{comp}}^2(\R^3) \rightarrow L_{\rm{loc}}^2(\R^3), \;\; \left(R_{z}\phi\right)(x):= \int_{\R^3} \frac{e^{iz|x-y|/c_0}}{4\pi|x-y|} \phi(y) dy, \;\; x\in \R^3,
\end{align}
where $L^2_{\textrm{comp}}(\R^3)$ is defined by
\begin{align*}
L^2_{\textrm{comp}}(\R^3):=\{u\in L^2(\R^3): \exists r>0, |u(x)| = 0\; \textrm{for}\; |x|>r\}.
\end{align*}
Denote $\mathbb L^2(\Omega)$ as $ \left(L^2(\Omega)\right)^3$ with the inner product given by the integral of the dot product of two functions over the domain $\Omega$. Given $\Omega$, for each $z \in \overline{\mathbb C_+}$, let $D_N(z)$ be the exterior Dirichlet to Neumann map at $z$, defined as $\gamma u \rightarrow \partial_\nu u$, where $u$ solves
\begin{align*}
&\Delta u + z^2 u = 0 \quad \textrm{in}\; \R^3 \backslash \Omega, \\
& u\; \textrm{is}\; z-\textrm{outgoing}.
\end{align*}
Here, the condition $z-\rm{outgoing}$ means that there exists $g\in L_{\textrm{comp}}^2(\R^3)$ and $r>0$ such that $u = R_{z} g$ outside $B_r$. 
Furthermore, it is known that $D_N(z)$ can be extended as a meromorphic family of operators mapping from $H^{1/2}(\Gamma)$ to $H^{-1/2}(\Gamma)$ in the whole complex plane. Let $\vep_1,\vep_2$ be two positive constants, we write $\vep_1 = O(\vep_2)$ as $\vep_2\rightarrow 0$, if there exists a constant $C$ such that $\vep_1 <C\vep_2$ for sufficiently small $\vep_2$.
From now on, $\mathbb I$ denotes an identity operator in various spaces, and the constants may be different at different places.

\subsection{Scattering resonances}
In this subsection, we establish that the scattering resonance of the Hamiltonian $H_{\rho_\tau,k_\tau}$ is also a point for which the related system of integral equations is not injective. We recall that the  scattering resonance of the Hamiltonian $H_{\rho_\tau,k_\tau}$ is defined as a pole of the meromorphic extension of $R_{H_{\rho_\tau,k_\tau}}(z)$.
It is worth mentioning that \cite{LS-04} provided another equivalent definition of the scattering resonances of the same type Hamiltonian as $H_{\rho_\tau,k_\tau}$,  as a non-injective point of the related integral equations. Therefore, by using same arguments as employed in the equivalence proof from \cite[section 5]{LS-04}, we immediately obtain the following result.
\begin{lemma}\label{le:5}
A point $z$ is a scattering resonance of the Hamiltonian $H_{\rho_\tau,k_\tau}$ for each fixed $\tau >0$, if and only if it is a point at which the operator $\mathcal A^{H}(z,\tau)$ is not injective. Here, for each $\tau >0$ and $z \in \CC$, the operator $\mathcal A^{H}(z,\tau) \in \mathcal L\left(L^2(\Omega)\times L^{2}(\Gamma)\right)$ is defined by 
\begin{align*}
\mathcal A^{H}(z,\tau):=\begin{bmatrix}
&\mathbb I - \left(\frac{1}{c^2_1}-\frac{1}{c^2_0}\right)z^2~ N_{\Omega, z/c_0} & \left(\frac{\rho_0}{\rho_1\tau}-1\right) SL_{\Gamma, z/c_0}\\
&\left(\frac{1}{c^2_0}-\frac{1}{c^2_1}\right)z^2 \partial_\nu N_{\Omega, z/c_0} &\quad\frac{\rho_0}{\rho_1 \tau}\left(\frac 12\left(1 + \frac {\rho_1\tau}{\rho_0}\right) \mathbb I + \left(1- \frac {\rho_1\tau}{\rho_0}\right)K_{\Gamma, z/c_0}^*\right)
\end{bmatrix},
\end{align*}
where $c_0$ and $c_1$ are specified in \eqref{eq:41}.
\end{lemma}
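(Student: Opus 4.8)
The plan is to carry out the equivalence proof of \cite[section~5]{LS-04} essentially verbatim, changing only the coefficients that differ in $\mathcal A^H(z,\tau)$. The argument reduces the statement to the homogeneous transmission problem: for fixed $\tau>0$ and $z\in\CC$, a solution $u$ of
\begin{align*}
& \Delta u + (z/c_1)^2 u = 0 \ \text{ in }\ \Omega, \qquad \Delta u + (z/c_0)^2 u = 0 \ \text{ in }\ \R^3\setminus\overline{\Omega}, \\
& u \ \text{is}\ (z/c_0)\text{-outgoing},
\end{align*}
supplemented by the transmission conditions $[u]_\Gamma = 0$ and $[\rho_\tau^{-1}\partial_\nu u]_\Gamma = 0$, which are exactly what is encoded in $u\in H^1(\R^3)$ together with $H_{\rho_\tau,k_\tau}u\in L^2_{\mathrm{loc}}$. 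The black box structure of $H_{\rho_\tau,k_\tau}$ (cf.\ \cite{LSW,DM}), together with its self-adjointness and non-negativity on $\mathcal H$, gives that $z$ is a scattering resonance if and only if this homogeneous problem possesses a nontrivial solution; so it remains to identify the latter solvability with the non-injectivity of $\mathcal A^H(z,\tau)$.

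The bridge is the layer-potential ansatz $u = N_{\Omega,z/c_0}[\phi] + SL_{\Gamma,z/c_0}[\psi]$ with $\phi\in L^2(\Omega)$ and $\psi\in L^2(\Gamma)$. Since $(\Delta + (z/c_0)^2)N_{\Omega,z/c_0}[\phi] = -\phi$ while $SL_{\Gamma,z/c_0}[\psi]$ is $(z/c_0)$-outgoing and solves the homogeneous Helmholtz equation off $\Gamma$, such a $u$ is automatically outgoing, continuous across $\Gamma$, and solves the exterior equation; the interior equation is then equivalent to the volume relation $\phi = z^2(c_1^{-2}-c_0^{-2})\,u|_\Omega$, which upon inserting the ansatz — and after the fixed rescaling of $\psi$ by which the paper normalizes the single-layer density — becomes precisely the first row of $\mathcal A^H(z,\tau)$, while the conormal condition $[\rho_\tau^{-1}\partial_\nu u]_\Gamma = 0$, evaluated with $\partial_\nu N_{\Omega,z/c_0}[\phi]$ continuous across $\Gamma$ and with the jump relations $\partial_\nu^{\pm}SL_{\Gamma,z/c_0}[\psi] = (\mp\tfrac12\mathbb I + K_{\Gamma,z/c_0}^*)\psi$, becomes precisely the second row. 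Thus $u$ from the ansatz solves the homogeneous problem if and only if $\mathcal A^H(z,\tau)(\phi,\psi) = 0$. For the converse inclusion, every outgoing solution of the homogeneous problem is of the ansatz form — this is obtained from the Green representation formula applied separately in $\Omega$ and in $\R^3\setminus\overline{\Omega}$ and the transmission relations — so non-injectivity of $\mathcal A^H(z,\tau)$ is implied by a nontrivial homogeneous solution; conversely, given a nonzero $(\phi,\psi)\in\ker\mathcal A^H(z,\tau)$, reading the computation backwards shows that the ansatz produces an outgoing $u$ solving the homogeneous problem, and $u\not\equiv 0$ because $u\equiv 0$ would force $\phi\equiv 0$ by the interior equation and then $\psi\equiv 0$ by the normal-derivative jump of $SL_{\Gamma,z/c_0}[\psi]$, contradicting $(\phi,\psi)\ne 0$. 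Together with the first paragraph this proves the lemma.

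It is worth recording — both for later use of $\mathcal A^H(z,\tau)$ and as a self-contained way to close the argument along the lines of \cite[section~5]{LS-04} — that $z\mapsto\mathcal A^H(z,\tau)$ is a holomorphic family of Fredholm operators of index zero on $L^2(\Omega)\times L^2(\Gamma)$: it equals the invertible operator $\mathrm{diag}\big(\mathbb I,\ \tfrac12(1+\rho_0(\rho_1\tau)^{-1})\mathbb I\big)$ plus an operator whose four entries $N_{\Omega,z/c_0}$, $\partial_\nu N_{\Omega,z/c_0}$, $SL_{\Gamma,z/c_0}$, $K_{\Gamma,z/c_0}^*$ are all compact by elliptic regularity and the Rellich embedding. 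At any $z$ with $z^2\notin[0,+\infty)$ the resolvent $R_{H_{\rho_\tau,k_\tau}}(z)$ is bounded, by self-adjointness, so the homogeneous problem there has only the trivial solution and $\mathcal A^H(z,\tau)$ is injective, hence invertible; the analytic Fredholm theorem then makes $z\mapsto\mathcal A^H(z,\tau)^{-1}$ meromorphic on $\CC$ with poles exactly at the non-injectivity points, and, factoring $R_{H_{\rho_\tau,k_\tau}}(z)$ through $\mathcal A^H(z,\tau)^{-1}$ by two $z$-holomorphic operators, one sees directly that these poles are exactly the scattering resonances.

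The genuinely new input beyond \cite{LS-04} is slight: the interior wavenumber is $z/c_1$ and the density contrast enters only through the positive scalar $\rho_0(\rho_1\tau)^{-1}$, so the Fredholm and meromorphic scaffolding is unchanged, which is why the proof is the same. I expect the only delicate (though still routine) point to be the bookkeeping in the second paragraph — matching the constants and signs of the conormal jump and of the single-layer normalization with the displayed rows of $\mathcal A^H(z,\tau)$ verbatim, and checking both injectivity ($u\equiv 0 \Rightarrow (\phi,\psi)=0$) and surjectivity of the ansatz — together with the harmless exceptional value $z=0$, at which the single-layer rescaling degenerates and the equivalence is instead verified directly on both sides.
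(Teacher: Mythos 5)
Your outline is the one the paper delegates to \cite[section 5]{LS-04}, and it is essentially correct in structure — reduction to the homogeneous transmission problem via the black-box formalism, then the layer-potential translation, jump relations, Fredholm index, and analytic Fredholm theory. However, the bookkeeping in the second paragraph is not quite right as stated. With the ansatz $u = N_{\Omega,z/c_0}[\phi] + SL_{\Gamma,z/c_0}[\psi]$ the interior equation does force $\phi = z^2(c_1^{-2}-c_0^{-2})\,u|_\Omega$, but the resulting consistency relation is $\phi - z^2(c_1^{-2}-c_0^{-2})N_{\Omega,z/c_0}[\phi] - z^2(c_1^{-2}-c_0^{-2})SL_{\Gamma,z/c_0}[\psi] = 0$, whose $SL$ coefficient is $-z^2(c_1^{-2}-c_0^{-2})$ rather than $\rho_0(\rho_1\tau)^{-1}-1$ as in the first row of $\mathcal A^{H}$; and the conormal jump gives a second row with $\partial_\nu N_{\Omega,z/c_0}$ coefficient $\rho_0(\rho_1\tau)^{-1}-1$ rather than $-z^2(c_1^{-2}-c_0^{-2})$. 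The rescaling of $\psi$ that reconciles both rows simultaneously is $\psi \mapsto -(\rho_0(\rho_1\tau)^{-1}-1)\bigl(z^2(c_1^{-2}-c_0^{-2})\bigr)^{-1}\psi$, which is $z$-dependent — degenerate at $z=0$ exactly as you flag — so it is not a \emph{fixed} rescaling, and the derived system is not ``precisely'' $\mathcal A^{H}$ but its conjugate by an invertible $z$-dependent diagonal (which still preserves injectivity for $z\neq 0$, so the lemma does follow once $z=0$ is handled separately). The cleaner route, and the one the paper itself records for resonant states in the remark following Corollary 2.1, is the Green-formula representation $u = z^2(c_1^{-2}-c_0^{-2})N_{\Omega,z/c_0}[\phi] - (\rho_0(\rho_1\tau)^{-1}-1)SL_{\Gamma,z/c_0}[\psi]$ with $\phi = u|_\Omega$ and $\psi = \partial_\nu^- u$; the self-consistency of $u|_\Omega$ and of $\partial_\nu^- u$ under the jump relations then reproduces the two rows of $\mathcal A^{H}$ verbatim and avoids any degeneracy. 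Aside from this coefficient slip, the injectivity check ($u\equiv 0 \Rightarrow (\phi,\psi)=0$), the Fredholm decomposition of $\mathcal A^{H}$, and the meromorphic continuation argument are all correctly set up.
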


In order to characterize the above mentioned resonances, with the aid of Theorem \ref{le:5}, 
it suffices to find $(\phi_z, \psi_z) \in L^2(\Omega) \times L^2(\Gamma)$ such that 
\begin{align}
\mathcal A^{H} (z,\tau)\begin{bmatrix}
 \phi_z \\
\psi_z 
\end{bmatrix} = \begin{bmatrix}
 0\\
 0
\end{bmatrix}. \notag
\end{align}
Clearly, the above equation is equivalent to the subsequent equation 
\begin{align}
\mathcal A(z,\tau)
\begin{bmatrix}
\phi_z\\
{\rho_0}{\tau^{-1}}/\rho_1 \psi_z 
\end{bmatrix} 
= \begin{bmatrix}
 0\\
 0
\end{bmatrix}. \notag
\end{align}
Here, $\mathcal A(z,\tau) \in \mathcal L\left(L^2(\Omega)\times L^{2}(\Gamma)\right)$ is defined by 
\begin{align}\label{eq:54}
\mathcal A(z,\tau):=
\begin{bmatrix}
&\mathbb I - \left(\frac{1}{c^2_1}-\frac{1}{c^2_0}\right)z^2~ N_{\Omega, z/c_0} & \left(1-\frac{\rho_1\tau}{\rho_0}\right) SL_{\Gamma, z/c_0}\\
&\left(\frac{1}{c^2_0}-\frac{1}{c^2_1}\right)z^2 \partial_\nu N_{\Omega, z/c_0} &\quad \frac 12\left(1 + \frac {\rho_1\tau}{\rho_0}\right) \mathbb I + \left(1-\frac {\rho_1\tau}{\rho_0}\right)K_{\Gamma, z/c_0}^*
\end{bmatrix}.
\end{align}

Then, as a by-product of Theorem \ref{le:5}, we obtain the following corollary.

\begin{corollary} \label{co:1}
The scattering resonance of the Hamiltonian $H_{\rho_\tau,k_\tau}$ for each fixed $\tau >0$ if and only if it is a point at which the operator $\mathcal A(z,\tau)$, specified in \eqref{eq:54}, is not injective. 
\end{corollary}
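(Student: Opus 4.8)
The plan is to read the corollary off from Lemma \ref{le:5} by observing that $\mathcal A(z,\tau)$ and $\mathcal A^{H}(z,\tau)$ differ only by an invertible rescaling of the second unknown, hence have simultaneously trivial kernels for every $z$.

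First I would make this rescaling explicit. On $L^2(\Omega)\times L^2(\Gamma)$ introduce the bounded diagonal operator $M:=\mathrm{diag}\!\left(\mathbb I,\ \tfrac{\rho_0}{\rho_1\tau}\,\mathbb I\right)$, that is, $M(\phi,\psi)=(\phi,\tfrac{\rho_0}{\rho_1\tau}\psi)$. Since $\tau>0$ and $\rho_0,\rho_1>0$, the factor $\rho_0/(\rho_1\tau)$ is a nonzero real number, so $M$ is a boundedly invertible bijection with $M^{-1}=\mathrm{diag}\!\left(\mathbb I,\ \tfrac{\rho_1\tau}{\rho_0}\,\mathbb I\right)$. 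A direct entrywise comparison of \eqref{eq:54} with the matrix in Lemma \ref{le:5} --- using $\left(\tfrac{\rho_0}{\rho_1\tau}-1\right)\tfrac{\rho_1\tau}{\rho_0}=1-\tfrac{\rho_1\tau}{\rho_0}$ for the $(1,2)$-entry, $\tfrac{\rho_0}{\rho_1\tau}\cdot\tfrac{\rho_1\tau}{\rho_0}=1$ for the $(2,2)$-entry, and noting that the first column of both operators is untouched by $M$ --- shows that $\mathcal A^{H}(z,\tau)=\mathcal A(z,\tau)\,M$. This is precisely the passage from the first displayed homogeneous system to the second one recorded just before the statement.

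Then I would conclude: from $\mathcal A^{H}(z,\tau)=\mathcal A(z,\tau)M$ and the bijectivity of $M$ one obtains $\operatorname{Ker}\mathcal A^{H}(z,\tau)=M^{-1}\!\left(\operatorname{Ker}\mathcal A(z,\tau)\right)$, so $\mathcal A^{H}(z,\tau)$ fails to be injective at $z$ if and only if $\mathcal A(z,\tau)$ fails to be injective at $z$. Combining this with Lemma \ref{le:5}, which identifies the scattering resonances of $H_{\rho_\tau,k_\tau}$ with the non-injectivity points of $\mathcal A^{H}(z,\tau)$, yields the asserted characterization in terms of $\mathcal A(z,\tau)$.

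There is no genuine obstacle in this argument; the only thing to verify is that $M$ and $M^{-1}$ are indeed bounded on $L^2(\Omega)\times L^2(\Gamma)$, which holds because $\tau$ is a fixed positive parameter, so the scaling constant is finite and nonzero. This is also why the statement is phrased for each fixed $\tau>0$: the equivalence degenerates as $\tau\to 0$, which is exactly the high-contrast regime treated in the remainder of the paper.
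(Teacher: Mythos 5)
Your argument is correct and is essentially the one the paper gives (informally, just before stating Corollary~\ref{co:1}): passing from $\mathcal A^{H}(z,\tau)$ to $\mathcal A(z,\tau)$ by rescaling the second unknown by $\rho_0/(\rho_1\tau)$, which for fixed $\tau>0$ is a boundedly invertible operation, so the kernels are nontrivial simultaneously, and Lemma~\ref{le:5} finishes the job. Writing out the rescaling explicitly as $\mathcal A^{H}(z,\tau)=\mathcal A(z,\tau)\,M$ with $M=\mathrm{diag}(\mathbb I,\,\tfrac{\rho_0}{\rho_1\tau}\mathbb I)$ makes the equivalence of kernels immediate and is a clean way to package what the paper states as "clearly equivalent".
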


\begin{remark} 
If $z$ is not scattering resonance of the Hamiltonian $H_{\rho_\tau,k_\tau}$, for each $f\in L^2_{\textrm{comp}}(\R^3)$, we have
\begin{align} 
R_{H_{\rho_\tau, k_\tau}}(z) f = & v^f_z(x) +
\begin{bmatrix}
\left(\frac{1}{c^2_0}-\frac{1}{c^2_1}\right)z^2~ N_{\Omega, z/c_0} & \left(1-\frac{\rho_1 \tau}{\rho_0}\right)SL_{\Gamma, z/c_0}\\
\left(\frac{1}{c^2_0}-\frac{1}{c^2_1}\right)z^2 \partial_\nu N_{\Omega, z/c_0} & \left(1-\frac{\rho_1 \tau}{\rho_0}\right)K_{\Gamma, z/c_0}^*
\end{bmatrix} &\mathcal A^{-1}(z,\tau)
\begin{pmatrix}
& g_z^{f}\\
& \partial_\nu g^f_z
\end{pmatrix} \notag\\
& + \left({c^{-2}_1}-{c^{-2}_0}\right) N_{\Omega, z/c_0} f. \label{eq:62}
\end{align}
Here, $v^f_z$ is denoted as
\begin{align} \label{eq:81}
v^f_{z}:= -\frac{1}{c_0^2} R_{z/c_0} f
\end{align}
with the operator $R_{z/c_0}$ defined by \eqref{eq:117}
and $g_z^f:= v_z^{f} + \left({c^{-2}_1}-{c^{-2}_0}\right) N_{\Omega, z/c_0} f$. We note that the inverse of $\mathcal A(z,\tau)$ exists by Corollary \ref{co:1}. Formula \eqref{eq:62} is easily derived from the subsequent Lippmann-Schwinger equation
\begin{align}
& u^f_{z,\tau}(x)= v^f_z(x) + \left(\frac{1}{c^2_1} - \frac{1}{c^2_0} \right)z^2\int_{\Omega} \frac{e^{i{z}|x-y|/c_0}}{4\pi|x-y|}u^f_{z,\tau}(y)dy + \left(\frac{1}{c^2_1}-\frac{1}{c^2_0}\right)\int_{\Omega}\frac{e^{i{z}|x-y|/c_0}}{4\pi|x-y|}f(y)dy \notag\\
&\quad\quad \quad \;-\left(\frac{\rho_0}{\rho_1\tau}-1\right)\int_{\Gamma}\frac{e^{i{z}{|x-y|}/c_0}}{4\pi|x-y|} \partial_\nu u^f_{z,\tau}(y)d\sigma(y), \quad x\in \R^3 \backslash \Gamma, \label{eq:75}
 \end{align}
where  
\begin{align} \label{eq:72}
u^f_{z,\tau}:= R_{H_{\rho_\tau, k_\tau}}(z) f,
\end{align}
with its value within $\Omega$ and its interior normal derivative $\partial_\nu u^f_{z, \tau}$ on $\Gamma$ solving
\begin{align}
\mathcal A(z,\tau) 
\begin{pmatrix}
& u^f_{z,\tau} \\
&\frac{\rho_0}{\rho_1 \tau}\partial_\nu u^f_{z,\tau}
\end{pmatrix}
=
\begin{pmatrix}
& g_z^{f}\\
& \partial_\nu g^f_z
\end{pmatrix}. \notag
\end{align}
The above representation of the resolvent, based on the Lippmann-Schwinger equation \eqref{eq:75}, is instrumental in deriving its asymptotics in section \ref{sec:4}.
\end{remark}

It is well established that $H_{\rho_\tau,k_\tau}$ admits two Minnaert resonances, whose asymptotics are presented as follows.

\begin{lemma}[Statement (b) of Lemma 5.1 in \cite{LS-04}] \label{le:10}
There exhibit two scattering resonances $z_{\pm}(\tau)$ of of $H_{\rho_\tau, k_\tau}$, denoted by $z_+(\tau)$ and $z_-(\tau)$ around $0$. Furthermore, these two resonances satisfy the following asymptotic expansion 
\begin{align}\label{eq:65}
z_{\pm}(\tau) = \pm \omega_M -i\frac{\omega^2_M \mathcal C_\Omega }{8\pi c_0} + O(\tau^{3/2}), \quad \textrm{as}\; \tau \rightarrow 0,
\end{align}
where $c_0:=\sqrt{k_0/\rho_0}$, and
\begin{align} \label{eq:70}
\omega_M:= \sqrt{\frac {\mathcal C_\Omega k_1}{|\Omega|\rho_0}}\sqrt \tau
\end{align}
denotes the related Minnaert frequency generated by the micro-bubble and $\mathcal C_\Omega$, defined by 
\begin{align}
\mathcal C_\Omega:= \int_\Gamma \left(S^{-1}_{\Gamma, 0}1\right)(x) d\sigma(x), \notag
\end{align}
represents the capacitance of $\Omega$.
\end{lemma}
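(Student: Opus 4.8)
The plan is to identify, via Corollary \ref{co:1}, the Minnaert resonances near $z=0$ as the points at which $\mathcal{A}(z,\tau)$ fails to be injective, and to reduce this to a scalar equation by a Schur-complement argument that exploits the near-degeneracy, as $\tau\to 0$, of the lower-right block of $\mathcal{A}(z,\tau)$. Writing $\mathcal{A}(z,\tau)=\big(B_{ij}(z,\tau)\big)_{1\le i,j\le 2}$ with the blocks read off from \eqref{eq:54}, for $z$ in a fixed neighbourhood of $0$ the block $B_{11}=\mathbb{I}-(c_1^{-2}-c_0^{-2})z^{2}N_{\Omega,z/c_0}$ is invertible on $L^2(\Omega)$ (it is $\mathbb{I}$ plus a compact operator of norm $O(z^{2})$), so $\mathcal{A}(z,\tau)(\phi,\psi)^{T}=0$ is equivalent to $\phi=-B_{11}^{-1}B_{12}\psi$ together with $M(z,\tau)\psi=0$, where $M(z,\tau):=B_{22}-B_{21}B_{11}^{-1}B_{12}\in\mathcal{L}(L^2(\Gamma))$. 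Expanding the layer operators for small argument ($K^*_{\Gamma,z/c_0}=K^*_{\Gamma,0}+O(z^{2})$, since the $O(z)$ term is the normal derivative of a constant; $SL_{\Gamma,z/c_0}=SL_{\Gamma,0}+\tfrac{iz}{4\pi c_0}\int_\Gamma(\cdot)\,d\sigma+O(z^{2})$; likewise for $N_{\Omega,z/c_0}$) and setting $\delta:=\rho_1\tau/\rho_0$, one finds
\[
M(z,\tau)=M_0+\delta\big(\tfrac12\mathbb{I}-K^*_{\Gamma,0}\big)+z^{2}E(z,\tau),\qquad M_0:=\tfrac12\mathbb{I}+K^*_{\Gamma,0},
\]
with $E(z,\tau)$ explicit and uniformly bounded for small $z,\tau$. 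Here $M_0$ is Fredholm of index $0$ with one-dimensional kernel $\mathrm{span}(\psi_0)$, $\psi_0:=S^{-1}_{\Gamma,0}1$, and the functional $\ell(\cdot):=\int_\Gamma(\cdot)\,d\sigma$ annihilates $\mathrm{Ran}(M_0)$ (because $\int_\Gamma K^*_{\Gamma,0}\psi\,d\sigma=-\tfrac12\int_\Gamma\psi\,d\sigma$); moreover $\ell(\psi_0)=\mathcal{C}_\Omega$ and $K^*_{\Gamma,0}\psi_0=-\tfrac12\psi_0$.

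Next, perform a Lyapunov--Schmidt reduction: splitting $\psi$ along $\mathrm{span}(\psi_0)$ and a complement, the projection of $M(z,\tau)\psi=0$ onto $\mathrm{Ran}(M_0)$ is uniquely solvable and produces a correction of size $O(\tau)$ which, to leading order, lies in $\mathrm{Ran}(M_0)=\ker\ell$ and is therefore invisible to $\ell$ at the orders needed; the remaining scalar equation is $\mathcal{F}(z,\tau):=\ell\big(M(z,\tau)\psi_0\big)+o(\tau^{3/2})=0$. The ingredients are $\ell(M_0\psi_0)=0$, $\ell\big(\delta(\tfrac12\mathbb{I}-K^*_{\Gamma,0})\psi_0\big)=\delta\mathcal{C}_\Omega$, and the $z^{2}$- and $z^{3}$-order pieces coming from $K^*_{\Gamma,z/c_0}$ and from $B_{21}B_{11}^{-1}B_{12}$; evaluating them uses $SL_{\Gamma,0}\psi_0\equiv 1$ on $\Omega$, $\int_\Gamma\partial_\nu N_{\Omega,0}(1)\,d\sigma=-|\Omega|$, $\int_\Gamma\partial_{\nu_x}|x-y|\,d\sigma(x)=8\pi\,(N_{\Omega,0}1)(y)$, $\int_\Gamma\partial_{\nu_x}|x-y|^{2}\,d\sigma(x)=6|\Omega|$, and the key relation $\partial_\nu\big(SL_{\Gamma,0}\psi_0\big)\big|_{+}=-\psi_0$, which gives $\int_\Gamma\psi_0\,\gamma N_{\Omega,0}(1)\,d\sigma=|\Omega|$ and collapses all $z^{2}$-order real contributions into $-z^{2}|\Omega|/c_1^{2}$. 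One arrives at
\[
\mathcal{F}(z,\tau)=\delta\,\mathcal{C}_\Omega-\frac{|\Omega|}{c_1^{2}}\,z^{2}-\frac{i\,\mathcal{C}_\Omega|\Omega|}{4\pi c_0 c_1^{2}}\,z^{3}+O(\tau^{2}).
\]

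Finally, solve the scalar equation. The leading balance $\delta\mathcal{C}_\Omega=|\Omega|z^{2}/c_1^{2}$ gives $z^{2}=\omega_M^{2}+O(\tau^{3/2})$, since $\delta\mathcal{C}_\Omega c_1^{2}/|\Omega|=\mathcal{C}_\Omega k_1\tau/(\rho_0|\Omega|)=\omega_M^{2}$; a Rouch\'e comparison of $\mathcal{F}(\cdot,\tau)$ with its leading part on a circle of radius $\sim\tau^{1/3}$ about $0$ then yields exactly two zeros there, one near each of $\pm\omega_M$, and injectivity of $\mathcal{A}(z,\tau)$ elsewhere in a fixed neighbourhood of $0$. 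Inserting the ansatz $z=\pm\omega_M+w$ into $\mathcal{F}=0$ and using $z^{3}=\pm\omega_M^{3}+O(\tau^{3/2})$ gives $\pm2\omega_M w=\mp\, i\,\mathcal{C}_\Omega\omega_M^{3}/(4\pi c_0)+O(\tau^{2})$, hence $z_\pm(\tau)=\pm\omega_M-i\,\omega_M^{2}\mathcal{C}_\Omega/(8\pi c_0)+O(\tau^{3/2})$, which is \eqref{eq:65}.

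I expect the principal obstacle to be the correct extraction of the $O(\tau^{3/2})$ imaginary coefficient: it requires the $z^{3}$-order expansions of two distinct layer operators and tracking a partial cancellation among the three $z^{3}$-contributions, and it rests on the non-obvious boundary identities above — in particular $\partial_\nu(SL_{\Gamma,0}\psi_0)|_{+}=-\psi_0$ and $\int_\Gamma\psi_0\,\gamma N_{\Omega,0}(1)\,d\sigma=|\Omega|$ — which are already needed to see that the $z^{2}$-order real part reduces to the clean term $-z^{2}|\Omega|/c_1^{2}$, i.e.\ that the leading balance reproduces the interior Neumann eigenvalue $z^{2}/c_1^{2}=0$ shifted by $\delta\mathcal{C}_\Omega/|\Omega|$. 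Establishing these identities and the precise $L^2(\Gamma)$-mapping properties that make the Schur and Lyapunov--Schmidt reductions rigorous for a $C^{1,1}$ boundary is the technical core; the remainder is the formal expansion sketched above, and it coincides with the argument of \cite[Lemma 5.1]{LS-04}.
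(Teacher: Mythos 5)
The paper does not prove Lemma~\ref{le:10}: it is imported verbatim, by citation, as Statement~(b) of Lemma~5.1 in \cite{LS-04}, so there is no internal proof to compare against. Your proposal is therefore a reconstruction of the cited argument, and it should be judged on its own merits.

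On those merits, the argument is correct and the structure — characterize resonances near $0$ as the non-injective points of $\mathcal A(z,\tau)$ via Corollary~\ref{co:1}, invert the $(1,1)$-block to pass to the Schur complement $M(z,\tau)$ on $L^2(\Gamma)$, then do a Lyapunov--Schmidt reduction around $\ker(\tfrac12\mathbb I+K^*_{\Gamma,0})=\mathrm{span}(\psi_0)$ and read off the resonances from the resulting scalar equation — is the natural one and, to my knowledge, is the one used in \cite{LS-04}. I verified the key ingredients: $\ell=\int_\Gamma(\cdot)\,d\sigma$ annihilates $\mathrm{Ran}(M_0)$ because $K_{\Gamma,0}1=-\tfrac12$; $\partial_\nu^+(SL_{\Gamma,0}\psi_0)=-\psi_0$ since $\partial_\nu^-(SL_{\Gamma,0}\psi_0)=(\tfrac12\mathbb I+K^*_{\Gamma,0})\psi_0=0$; a Green identity in the exterior then gives $\int_\Gamma\psi_0\,\gamma N_{\Omega,0}(1)\,d\sigma=|\Omega|$; and the two $z^2$-contributions combine as $-\tfrac{z^2}{c_0^2}|\Omega|+(c_0^{-2}-c_1^{-2})z^2|\Omega|=-\tfrac{z^2|\Omega|}{c_1^2}$, while the two $z^3$-contributions combine as $-\tfrac{iz^3|\Omega|\mathcal C_\Omega}{4\pi c_0^3}+(c_0^{-2}-c_1^{-2})\tfrac{iz^3|\Omega|\mathcal C_\Omega}{4\pi c_0}=-\tfrac{iz^3|\Omega|\mathcal C_\Omega}{4\pi c_0 c_1^2}$, exactly as you state. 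Solving the resulting cubic then reproduces \eqref{eq:65} with $\omega_M$ as in \eqref{eq:70}.

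Two points you should tighten if you write this out in full. First, the statement that the Lyapunov--Schmidt correction $\psi_\perp$ is ``invisible to $\ell$ at the orders needed'' is best phrased as: taking the complement of $\mathrm{span}(\psi_0)$ to be $\ker\ell$, one has $\ell(\psi_\perp)=0$ exactly and $\ell(M_0\psi_\perp)=0$ exactly; what remains is $\ell\big((\delta(\tfrac12\mathbb I-K^*_{\Gamma,0})+z^2E)\psi_\perp\big)$, and since $\ell((\tfrac12\mathbb I-K^*_{\Gamma,0})\psi_\perp)=\ell(\psi_\perp)=0$ this is $O(z^2\|\psi_\perp\|)=O(z^2(\delta+z^2))=O(\tau^2)$ at the resonance scale, which is below $O(\tau^{3/2})$. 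Second, a small bookkeeping detail: when you insert $z=\pm\omega_M+w$, you discard the $|\Omega|w^2/c_1^2$ term and the $O(\tau^2)$ remainder against the linear term $\mp 2|\Omega|\omega_M w/c_1^2$; to close this self-consistently you should first note, e.g.\ from the Rouch\'e step, that $w=O(\tau)$, after which $w^2=O(\tau^2)$ is indeed subleading and the claimed $O(\tau^{3/2})$ error in \eqref{eq:65} is justified.
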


We proceed to introduce the resonant projection $\Pi_{H_{\rho_\tau,k_\tau}}(z) \subset \mathcal L \left(\mathcal H_{\textrm{comp}}, \mathcal D_{\textrm{loc}}(H_{\rho_\tau,k_\tau})\right)$, defined as:
\begin{align} \label{eq:64}
\Pi_{H_{\rho_\tau,k_\tau}}(z) = -\frac{1}{2\pi i}\int_{B(z)} R_{H_{\rho_\tau, k_\tau}}(\lambda) 2\lambda d\lambda.
\end{align}
Here, $B(z)$ is a circle in the complex plane, which contains $z$ while excludes other resonances.
The following lemma establishes the connection between the resonant projection and the resolvent $R_{H_{\rho_\tau,k_\tau}}$.

\begin{lemma}[{\cite[Theorem 4.7]{DM}}] \label{le:3}
Let $\tau > 0$ be fixed. Assume that $\lambda_\tau$ is any resonance of the Hamiltonian $H_{\rho_\tau,k_\tau}$. There exists a positive integer $M_{\lambda_\tau}\le m_R(\lambda_\tau)$ such that the resolvent $R_{H_{\rho_\tau,k_\tau}}$ has the expansion near $\lambda_\tau$
\begin{align*}
R_{H_{\rho_\tau,k_\tau}}(z) = - \sum^{M_{\lambda_\tau}}_{l=1} \frac{(H_{\rho_\tau, k_{\tau}}-\lambda_\tau^2)^{l-1}}{\left(z^2-\lambda_\tau^2\right)^{l}} \Pi_{H_{\rho_\tau,k_\tau}}({\lambda_\tau})  + B(z, \lambda_\tau),
\end{align*}
where $z \rightarrow B(z,\lambda_\tau)$ is holomorphic near $\lambda_{\tau}$ and  $\Pi_{{H_{\rho_\tau,k_\tau}}}({\lambda_\tau})$ satisfies
\begin{align*}
\left(H_{\rho_\tau, k_\tau} - \lambda_\tau^2 \right)^{M_{\lambda_\tau}} \Pi_{{H_{\rho_\tau,k_\tau}}}({\lambda_\tau}) = 0.
\end{align*}
Here, $\Pi_{{H_{\rho_\tau,k_\tau}}}({\lambda_\tau})$ is given by \eqref{eq:64}, and 
\begin{align}\label{eq:118}
m_R(\lambda_\tau) = {\rm{dim}}\left({\rm{Ran}}\left(\Pi_{{H_{\rho_\tau,k_\tau}}}({\lambda_\tau})\right)\right). 
\end{align}
\end{lemma}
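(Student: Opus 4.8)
\textbf{Proof plan for Lemma \ref{le:3}.}

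The statement is attributed to \cite[Theorem 4.7]{DM}, so the plan is essentially to transcribe the black-box resonance machinery to the present setting; the only genuine work is to check that $H_{\rho_\tau,k_\tau}$ fits the hypotheses of that abstract result. First I would recall, from the discussion surrounding \eqref{eq:164}--\eqref{eq:165} and the cited Lemma~2.3 / Remark~2.4 in \cite{LSW}, that for each fixed $\tau>0$ the Hamiltonian $H_{\rho_\tau,k_\tau}$ is a black-box Hamiltonian in the sense of Sj\"ostrand--Zworski (self-adjoint, bounded below, agreeing with $-c_0^2\Delta$ outside $B_{r_0}$, with the relatively-compact resolvent condition on the reference space $\mathcal H$). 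This is exactly the framework in which the resolvent $R_{H_{\rho_\tau,k_\tau}}(z)=(H_{\rho_\tau,k_\tau}-z^2)^{-1}$ continues meromorphically from $\mathcal H_{\mathrm{comp}}$ to $\mathcal D_{\mathrm{loc}}(H_{\rho_\tau,k_\tau})$, as already invoked via \cite[Theorem 4.4]{DM}.

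Given this, fix a resonance $\lambda_\tau$ and a small circle $B(\lambda_\tau)$ enclosing $\lambda_\tau$ and no other resonance. The projection $\Pi_{H_{\rho_\tau,k_\tau}}(\lambda_\tau)$ defined by the contour integral \eqref{eq:64} is finite-rank because the singular part of $R_{H_{\rho_\tau,k_\tau}}$ at $\lambda_\tau$ is finite-rank (a standard consequence of the meromorphic Fredholm structure of the black-box resolvent); set $m_R(\lambda_\tau)$ to be its rank as in \eqref{eq:118}. The Laurent expansion of a meromorphic operator-valued function with a pole at $\lambda_\tau^2$ in the variable $z^2$ then yields the finite sum $-\sum_{l=1}^{M_{\lambda_\tau}}(z^2-\lambda_\tau^2)^{-l}A_l$ plus a holomorphic remainder $B(z,\lambda_\tau)$; the order $M_{\lambda_\tau}$ of the pole is at most $m_R(\lambda_\tau)$. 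The identification of the coefficients $A_l$ with $(H_{\rho_\tau,k_\tau}-\lambda_\tau^2)^{l-1}\Pi_{H_{\rho_\tau,k_\tau}}(\lambda_\tau)$, and the nilpotency relation $(H_{\rho_\tau,k_\tau}-\lambda_\tau^2)^{M_{\lambda_\tau}}\Pi_{H_{\rho_\tau,k_\tau}}(\lambda_\tau)=0$, follow from multiplying the resolvent identity $(H_{\rho_\tau,k_\tau}-z^2)R_{H_{\rho_\tau,k_\tau}}(z)=\mathbb I$ by the Laurent series and matching powers of $(z^2-\lambda_\tau^2)$ — equivalently, from applying the holomorphic functional calculus to the isolated singularity, exactly as in the classical Ribari\v{c}--Vidav / Gohberg--Sigal theory and as carried out in \cite[Theorem 4.7]{DM}.

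The only point requiring care — and the one I would flag as the main obstacle — is that $H_{\rho_\tau,k_\tau}$ is \emph{unbounded}, so the algebraic manipulations with $(H_{\rho_\tau,k_\tau}-\lambda_\tau^2)^{l-1}$ must be justified on the appropriate domains rather than formally. Concretely, one checks that $\mathrm{Ran}(\Pi_{H_{\rho_\tau,k_\tau}}(\lambda_\tau))$ is contained in $\mathcal D_{\mathrm{loc}}(H_{\rho_\tau,k_\tau})$ and is invariant under $H_{\rho_\tau,k_\tau}$ (localized appropriately), so that the powers of $(H_{\rho_\tau,k_\tau}-\lambda_\tau^2)$ act on a fixed finite-dimensional space on which everything is bounded; the nilpotency then reduces to the statement that this generalized eigenspace is finite-dimensional with the compression of $H_{\rho_\tau,k_\tau}$ to it having the single eigenvalue $\lambda_\tau^2$. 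All of this is precisely the content of \cite[Theorem 4.7]{DM} for black-box Hamiltonians, and since $H_{\rho_\tau,k_\tau}$ has been shown to be such a Hamiltonian for each fixed $\tau$, the lemma follows by direct citation. I would therefore present the proof as a one-paragraph verification that the hypotheses of \cite[Theorem 4.7]{DM} are met, followed by invoking that theorem.
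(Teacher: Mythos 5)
Your proposal matches the paper's treatment: the paper states this lemma as a direct citation of \cite[Theorem 4.7]{DM}, with the applicability guaranteed by the earlier observation that $H_{\rho_\tau,k_\tau}$ is a black-box Hamiltonian (via \cite[Lemma~2.3, Remark~2.4]{LSW}) and that its resolvent continues meromorphically from $\mathcal H_{\mathrm{comp}}$ to $\mathcal D_{\mathrm{loc}}(H_{\rho_\tau,k_\tau})$ by \cite[Theorem~4.4]{DM}. Your one-paragraph verification of hypotheses followed by citation is exactly the intended argument; the paper simply compresses it to the citation itself.
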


We conclude this subsection by introducing the definition of resonant states and generalized resonant states. 

\begin{definition}
Let $\tau>0$ be fixed. Assume that $\lambda_\tau$ is a resonance of the Hamiltonian $H_{\rho_\tau,k_\tau}$.
We call an element $u_{\lambda_\tau}$ is a resonant state corresponding to the resonance $\lambda_\tau \ne 0$ if $u_{\lambda_\tau} \in \Pi_{{H_{\rho_\tau,k_\tau}}}({\lambda_\tau})\mathcal H_{\rm{comp}}$ and $H_{\rho_\tau, k_\tau} u_{\lambda_\tau} - \lambda_\tau^2 u_{\lambda_\tau} = 0$. Furthermore, we call an element $v_{\lambda_\tau}$ is a generalized resonant state corresponding to the resonance $\lambda_\tau \ne 0$ if $v_{\lambda_\tau} \in \Pi_{{H_{\rho_\tau,k_\tau}}}({\lambda_\tau}) \mathcal H_{\rm{comp}}$ but $v_{\lambda_\tau}$ is not a resonant state.
\end{definition}

\begin{remark}[Characterization of the resonant states] \label{re:0}
Let $\lambda_\tau$ with $\tau>0$ be any resonance of the Hamiltonian $H_{\rho_\tau,k_\tau}$. With the aid of \cite[Theorem 4.9]{DM}, we obtain that  $u_{\lambda_\tau} \in {\rm{Ran}}\left(\Pi_{H_{\rho_\tau,k_\tau}}({\lambda_\tau})\right)$ is a resonant state if and only if 
\begin{align*}
H_{\rho_\tau, k_\tau} u_{\lambda_\tau} - \lambda_\tau^2 u_{\lambda_\tau} = 0
\end{align*}
and there exist $g_{\lambda_\tau}\in L^2_{{\rm{comp}}}(\R^3)$ and $r > 0$ such that 
\begin{align*}
u_{\lambda_\tau}|_{\R^3\backslash B_{r}} = R_{\lambda_\tau/c_0} g_{\lambda_\tau}|_{\R^3\backslash B_{r}},
\end{align*}
i.e.,
\begin{align} 
&\Delta u_{\lambda_\tau} + \lambda_\tau^2c_0^{-2} u_{\lambda_\tau} = 0 \quad {\rm{in}}\; \R^3 \backslash \Omega, \label{eq:141}\\
&\Delta u_{\lambda_\tau} + \lambda_\tau^2 c_1^{-2} u_{\lambda_\tau} = 0 \quad {\rm{in}}\; \Omega,\\
& \gamma_+ u_{\lambda_\tau} = \gamma_- u_{\lambda_\tau}, \quad \partial^+_{\nu} u_{\lambda_\tau} = \frac{\rho_0}{\rho_1\tau}\partial^-_{\nu} u_{\lambda_\tau} \quad {\rm{on}}\; \Gamma, \\
& u_{\lambda_\tau}\; {\rm{is}}\; \lambda_\tau/c_0-{\rm{outgoing}}. \label{eq:142}
\end{align}
Here, $\gamma_\pm$ and $\partial_\nu^\pm$ denote the Dirichlet trace and normal derivative operators taken from the exterior and interior of $\Omega$, respectively.
Moreover, when $u_{\lambda_\tau}$ is a resonant state, utilizing \eqref{eq:141}--\eqref{eq:142} and Green formulas, we arrive at 
\begin{align}
& u_{\lambda_\tau}(x) = \left(\frac{1}{c^2_1} - \frac{1}{c^2_0}\right)\left(\lambda_\tau\right)^2 \left(N_{\Omega, \lambda_\tau /c_0} u_{\lambda_\tau}\right)(x) \notag \\
&\quad\quad \quad -\left(\frac{\rho_0}{\rho_1\tau}-1\right)\left(SL_{\Gamma,\lambda_\tau/c_0}\partial_\nu u_{\lambda_\tau}\right)(x), \quad x\in \R^3 \backslash \Gamma.  \label{eq:77}
 \end{align}
Thus, the value of $u_{\lambda_\tau}$ within $\Omega$ and the multiplication of the constant $\rho_0/(\rho_1\tau)$ and its normal derivative on $\Gamma$ constitute a nonzero solution of $\mathcal A(\lambda_\tau,\tau)$.
\end{remark}

\section{Existence and uniqueness of  Fabry-P\'erot resonances} \label{sec:3}

This section is devoted to investigating the existence and uniqueness of the  Fabry-P\'erot resonances. With the aid of Corollary \ref{co:1}, it is sufficient to study the non-injective points of $\mathcal A(z,\tau)$, specified in \eqref{eq:54}. For this aim, we observe that 
\begin{align}
&\mathcal A(z,\tau) = \begin{bmatrix}
&\mathbb I - \left(\frac{1}{c^2_1}-\frac{1}{c^2_0}\right)z^2~ N_{\Omega, z/c_0} & SL_{\Gamma, z/c_0} \notag \\
&\left(\frac{1}{c^2_0}-\frac{1}{c^2_1}\right)z^2 \partial_\nu N_{\Omega, z/c_0} &\quad \frac{\mathbb I}2 + K_{\Gamma, z/c_0}^*
\end{bmatrix}  + \frac{\rho_1\tau}{\rho_0}\begin{bmatrix}
&0 & -SL_{\Gamma, z/c_0}\\
&0 &\quad \frac 12 \mathbb I - K_{\Gamma, z/c_0}^*
\end{bmatrix} \notag \\
&\qquad\quad\;=:\mathcal A_0(z) + \frac{\rho_1\tau}{\rho_0}\mathcal A_1(z). \label{eq:40}
\end{align}

The following result demonstrates the injective properties of $\mathcal A_0(z)$ and $\mathcal A(z,\tau)$.

\begin{lemma} \label{le:0}
Let $\tau>0$ and $z\in \CC$. Assume that the operators $\mathcal A_0(z)$ and $\mathcal A(z,\tau)$ are given by \eqref{eq:40} and \eqref{eq:54}, respectively. The following arguments hold true. 
\begin{enumerate}[(a)]
\item \label{f1} 
The operator $\mathcal A_0(z)$ is not injective at $z_0 \in \R$ if and only if $z^2_0/c^2_1$ is a Neumann eigenvalue of the domain $\Omega$.

\item \label{f2}
Assume that $z_0 \in \mathbb R$ such that $z^2_0/c^2_1$ is a Neumann eigenvalue of the domain $\Omega$. There exists $\delta_{z_0}> 0$ such that when $\tau \in (0,\delta_{z_0})$, we can identify a finite set of points, whose the total number is independent of $\tau$, in the neighborhood of $z_0$ such that $\mathcal A(z,\tau)$ fails to be injective. Furthermore, these points all tend to $z_0$ when $\tau \rightarrow 0$.

\item \label{f3}
Let $I \subset \R$ be a bounded interval. There exists a constant $\delta_I> 0$ independent of $\tau$, and a constant $\tau_{\delta_I} > 0$ dependent on $\delta_I$, such that when $\tau \in (0, \tau_{\delta_I})$, $\mathcal A(z,\tau)$ is invertible in $ \mathcal I_I(\delta_I)$, which is defined by
\begin{align}
\mathcal I_I(\delta_I):= \left\{z\in \mathbb C: {\rm{Re}}(z) \in I,\; |{\rm{Im}}(z)| <\delta_I\right\} \backslash \bigcup_{z_0\in I \cap \Lambda_N} B_{\delta_I}(z_0). \label{eq:74}
\end{align}
Here, 
\begin{align} \label{eq:82} 
\Lambda_{N}:=\{z_0\in \mathbb R: z^2_0/c^2_1\; \textrm{is a Neumann eigenvalue of the domain}\; \Omega\}.
\end{align}
\end{enumerate}

\end{lemma}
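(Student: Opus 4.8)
The plan is to analyze the three statements in turn, exploiting the decomposition $\mathcal A(z,\tau) = \mathcal A_0(z) + \tfrac{\rho_1\tau}{\rho_0}\mathcal A_1(z)$ from \eqref{eq:40} together with the fact that $\mathcal A_0(z)$ is a Fredholm operator of index zero (it is a compact perturbation of a block-triangular operator whose diagonal blocks, $\mathbb I - (c_1^{-2}-c_0^{-2})z^2 N_{\Omega,z/c_0}$ and $\tfrac12\mathbb I + K_{\Gamma,z/c_0}^*$, are themselves $\mathbb I$ plus compact). For part \eqref{f1}, I would show that a nontrivial kernel element $(\phi,\psi)$ of $\mathcal A_0(z_0)$ corresponds, via the layer-potential representation $u := N_{\Omega,z_0/c_0}$-type volume potential plus $SL_{\Gamma,z_0/c_0}\psi$, to a function satisfying the \emph{free} transmission conditions (i.e. $\tau=0$, hence $\partial_\nu^+ u = 0$): the second row forces the exterior normal trace to vanish while the first row identifies $u|_\Omega$ as a Helmholtz function with wavenumber $z_0/c_1$. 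Thus $u|_\Omega$ is a Neumann eigenfunction of $-\Delta$ on $\Omega$ with eigenvalue $z_0^2/c_1^2$, and conversely any such eigenfunction, extended by $0$ outside (consistent because the exterior equation with zero Cauchy data on $\Gamma$ gives $u\equiv 0$ there by uniqueness of Helmholtz — here the hypothesis $z_0\in\R$, or more precisely the exterior uniqueness with outgoing/zero data, is what is used), produces a kernel element. The one subtlety is checking that the correspondence between kernel elements and eigenfunctions is a bijection onto the eigenspace, so that the \emph{geometric} multiplicity of $z_0$ as a non-injectivity point of $\mathcal A_0$ equals the dimension of the Neumann eigenspace; this uses injectivity of the relevant layer/volume potentials off $\Gamma$ and the jump relations.

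For part \eqref{f2}, the idea is a Gohberg–Sigal / generalized Rouché argument applied to the analytic operator-valued family $z\mapsto \mathcal A(z,\tau)$. Since $\mathcal A_0(z)$ is Fredholm of index $0$ and invertible for $z$ in a punctured neighborhood of $z_0$ on the real axis (by part \eqref{f1}, as there are no other Neumann eigenvalues nearby), and since $\tfrac{\rho_1\tau}{\rho_0}\mathcal A_1(z)$ is a small perturbation in operator norm as $\tau\to 0$, the total algebraic multiplicity (in the Gohberg–Sigal sense, i.e. the winding number of $\det$-like quantity, or the multiplicity of the characteristic value) of $\mathcal A(\cdot,\tau)$ inside a fixed small circle $\partial B_r(z_0)$ is, for $\tau$ small enough, equal to the total multiplicity of $\mathcal A_0$ there, namely the dimension $d$ of the Neumann eigenspace. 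Hence for $\tau\in(0,\delta_{z_0})$ there are finitely many non-injectivity points of $\mathcal A(\cdot,\tau)$ in $B_r(z_0)$, with total multiplicity exactly $d$ independent of $\tau$; and as $r$ may be taken arbitrarily small (shrinking $\delta_{z_0}$ accordingly), all these points converge to $z_0$ as $\tau\to 0$. I would phrase this via the stability of the multiplicity under analytic perturbation, citing the Gohberg–Sigal theorem; the main point to verify carefully is that the characteristic-value count for $\mathcal A_0$ at $z_0$ (a priori involving also generalized kernel vectors / Jordan chains) is finite and controls the count for the perturbed family — this is exactly where Fredholmness of index $0$ plus invertibility on a punctured disc is essential.

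For part \eqref{f3}, I would argue by contradiction/compactness. Fix the bounded interval $I$. First, the set $I\cap\Lambda_N$ is finite (Neumann eigenvalues of $-\Delta$ on $\Omega$ form a discrete sequence tending to $+\infty$, and $\Lambda_N$ is their image under $z\mapsto c_1\sqrt{\lambda}$). On the compact set $\{z:\mathrm{Re}(z)\in \bar I,\ |\mathrm{Im}(z)|\le 1\}\setminus\bigcup_{z_0\in I\cap\Lambda_N}B_{\delta}(z_0)$, the family $\mathcal A_0(z)$ is invertible for every $z$: on the real slice this is part \eqref{f1} (no Neumann eigenvalue outside the removed balls), and for $z$ with $\mathrm{Im}(z)\ne 0$ one checks $\mathcal A_0(z)$ is injective — a kernel element again gives, via layer potentials, an outgoing Helmholtz solution in $\R^3\setminus\Omega$ with vanishing exterior Neumann data, forcing $u\equiv 0$ outside by exterior uniqueness, and then $u|_\Omega$ solves a Helmholtz equation with zero Cauchy data, hence is $0$; since injective Fredholm-index-$0$ operators are invertible, $\mathcal A_0(z)^{-1}$ exists. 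By continuity of $z\mapsto\|\mathcal A_0(z)^{-1}\|$ on this compact set we get a uniform bound $M_\delta$. Then a Neumann-series perturbation argument: $\mathcal A(z,\tau)=\mathcal A_0(z)\bigl(\mathbb I + \tfrac{\rho_1\tau}{\rho_0}\mathcal A_0(z)^{-1}\mathcal A_1(z)\bigr)$ is invertible as soon as $\tfrac{\rho_1\tau}{\rho_0}M_\delta\sup_z\|\mathcal A_1(z)\| < 1$, i.e. for $\tau<\tau_\delta$; choosing $\delta_I=\delta$ and $\tau_{\delta_I}=\tau_\delta$ gives the claim. The main obstacle across all three parts is the same technical core — establishing the dictionary between the integral operator $\mathcal A_0$ (and its perturbation) and the Neumann/transmission boundary value problems, including the exterior uniqueness needed for complex $z$ near (but off) the real axis, and making the multiplicity bookkeeping in \eqref{f2} rigorous via Gohberg–Sigal; once that dictionary is in place, the Fredholm theory and Neumann-series estimates are routine.
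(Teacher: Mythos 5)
Your overall strategy (layer-potential dictionary for (a), Gohberg--Sigal / generalized Rouch\'e for (b), compactness plus Neumann series for (c)) is the right one, and it is essentially the paper's. However there is a genuine gap in part (a) and a flawed step in part (c).

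For part (a), the forward direction is fine. The converse direction --- given a Neumann eigenfunction $\phi_{z_0}$, produce $\psi_{z_0}$ so that $(\phi_{z_0},\psi_{z_0})\in\operatorname{Ker}\mathcal A_0(z_0)$ --- reduces to finding $\psi_{z_0}\in H^{-1/2}(\Gamma)$ with $SL_{\Gamma,z_0/c_0}\psi_{z_0}=-\phi_{z_0}-(c_0^{-2}-c_1^{-2})z_0^2 N_{\Omega,z_0/c_0}\phi_{z_0}$ in $\Omega$. Your remark that one should ``extend by $0$ outside'' is not the right construction: the correct kernel element has $\psi_{z_0}=D_N(z_0/c_0)\gamma\phi_{z_0}$, which is nonzero in general since $\gamma\phi_{z_0}\not\equiv 0$. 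More importantly, you do not address the case where $z_0^2/c_0^2$ is a Dirichlet eigenvalue of $\Omega$: there $S_{\Gamma,z_0/c_0}$ is not invertible, the right-hand side above is not automatically a single-layer potential, and representing it as one requires a careful analysis (characterizing $\operatorname{Ker}(S_{\Gamma,z_0/c_0})$ via normal derivatives of Dirichlet eigenfunctions, proving $\operatorname{Ran}(S_{\Gamma,z_0/c_0})=\operatorname{Ker}(S_{\Gamma,z_0/c_0})^\perp$, and then correcting the naive solution by Dirichlet-eigenfunction components). This ``Case 2'' is the technical core of the paper's proof of (a), and your proposal silently assumes it away.

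For part (c), your argument that $\mathcal A_0(z)$ is injective for $\operatorname{Im}(z)\ne 0$ has an error. Tracking the jump relations, a kernel element $(\phi,\psi)$ of $\mathcal A_0(z)$ gives $u:=(c_0^{-2}-c_1^{-2})z^2 N_{\Omega,z/c_0}\phi+SL_{\Gamma,z/c_0}\psi$ with $u=-\phi$ in $\Omega$ and $\partial_\nu^- u=0$; it is the \emph{interior} Neumann data that vanishes, not the exterior ($\partial_\nu^+ u=-\psi$ by the jump of the single layer). Also, for $\operatorname{Im}(z)<0$ the $z$-outgoing condition allows exponential growth, so Rellich-type exterior uniqueness does not apply. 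The paper's argument is both simpler and correct: any kernel element forces $\phi$ to be a Neumann eigenfunction with eigenvalue $z^2/c_1^2$; Neumann eigenvalues are real nonnegative, so $z$ must be real (and the $\phi\equiv 0$ degenerate case must be excluded separately). After replacing your exterior-uniqueness step with this observation, the compactness plus Neumann-series (or Rouch\'e) conclusion goes through as you describe. Part (b) is correct and matches the paper.
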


\begin{proof}
\eqref{f1} Assume that $z_0 \in \R$ is a point where $A_0(z_0)$ fails to be injective. Then, there exists $(\phi_{z_0}, \psi_{z_0}) \in L^2(\Omega) \times L^2(\Gamma)$ such that 
\begin{align} \label{eq:3}
\mathcal A_0(z_0) \begin{bmatrix}
\phi_{z_0}\\
\psi_{z_0} 
\end{bmatrix} = \begin{bmatrix}
0\\
0
\end{bmatrix},
\end{align}
which implies 
\begin{align*}
&\left(\frac{1}{c_0^{2}} - \frac{1}{c_1^{2}}\right) z_0^2 N_{\Omega,z_0/c_0} \phi_{z_0}+ SL_{\Gamma, z_0/c_0} \psi_{z_0} = -\phi_{z_0} \quad \textrm{in}\; \Omega,\\
& \partial_\nu \left(\left({c_0^{-2}} - {c_1^{-2}}\right) z_0^2 N_{\Omega,z_0/c_0} \phi_{z_0}+ SL_{\Gamma, z_0/c_0} \psi_{z_0} \right) = 0  \quad \textrm{on}\; \Gamma.
\end{align*}
Therefore, we easily obtain
\begin{align}
&\Delta \phi_{z_0} + z_0^2/c^2_0\phi_{z_0} = \left(\frac{1}{c_0^{2}} - \frac{1}{c_1^{2}}\right) z_0^2\phi_{z_0} \quad \textrm{in}\; \Omega, \label{eq:67}\\
& \partial_\nu \phi_{z_0} = 0 \quad \textrm{on}\; \Gamma. \label{eq:68}
\end{align}
This means that $z^2_0/c^2_1$ is a Neumann eigenvalue of the domain $\Omega$.

Conversely, given any $z_0 \in \R$ such that $z^2_0/c^2_1$ is a Neumann eigenvalue of the domain $\Omega$, we proceed to construct a nonzero solution such that \eqref{eq:3} holds. Since \eqref{eq:3} is equivalent to the following equation
\begin{align}
\mathcal A_0(-z_0) \begin{bmatrix}
\overline{\phi_{z_0}}\\
\overline{\psi_{z_0}}
\end{bmatrix} = \begin{bmatrix}
0\\
0
\end{bmatrix}, \notag
\end{align}
it sufficient to focus on the case of $z_0 \in \R_+$. To do so, we aim to construct $\psi_{z_0} \in H^{-1/2}(\Gamma)$ satisfying
\begin{align} 
SL_{\Gamma,z_0/c_0}\psi_{z_0} + (\phi_{z_0} + \left({c_0^{-2}} - {c_1^{-2}}\right) z_0^2 N_{\Omega,z_0/c_0}\phi_{z_0}) = 0 \quad \textrm{in}\; \Omega. \label{eq:11}
\end{align}
Here, $\phi_{z_0}$ is any nonzero Neumann eigenfunction corresponding to $z^2_0/c^2_1$. By the regularity of the boundary $\Gamma$, it is known that $\phi_{z_0} \in H^2(\Omega)$. Once \eqref{eq:11} is verified, we have that 
\begin{align}\label{eq:6}
\left(\frac{1}2 + K^*_{\Gamma,z_0/c_0}\right)\psi_{z_0} = \partial_\nu (-\left({c_0^{-2}} - {c_1^{-2}}\right) z_0^2 N_{\Omega,z_0/c_0}\phi_{z_0}) \quad \textrm{on}\; \Gamma
\end{align}
and that
\begin{align} \label{eq:59}
\left(-\frac{1}2 + K^*_{\Gamma,z_0/c_0}\right)\psi_{z_0} = D_N(z_0/c_0)(-\gamma\left(\phi_{z_0}\right) - \left({c_0^{-2}} - {c_1^{-2}}\right) z_0^2 \partial_\nu N_{\Omega,z_0/c_0}\phi_{z_0}) \quad \textrm{on}\; \Gamma,
\end{align}
due to the well-posedness of the exterior Dirichlet obstacle scattering problem and the jump relations of $S_{\Gamma,\lambda}$. Furthermore, subtracting \eqref{eq:6} and \eqref{eq:59}, we have 
\begin{align} \label{eq:84}
\psi_{z_0} = D_N(z_0/c_0) \gamma\left(\phi_{z_0}\right).
\end{align}
In conjunction with $\phi_{z_0} \in H^{2}(\Omega)$, we obtain $\psi_{z_0} \in L^2(\Gamma)$. Therefore, building upon \eqref{eq:11} and \eqref{eq:6}, $(\phi_{z_0}, \psi_{z_0}) \in L^2(\Omega) \times L^2(\Gamma)$ solves \eqref{eq:3}. In order to verify \eqref{eq:11}, we distinguish between the following two cases.

\textbf{Case 1}: 
If $z^2_0/c^2_0$ is not a Dirichlet eigenvalue of $\Omega$, by using the well-known fact that $S_{\Gamma,z_0/c_0}$ is invertible from $H^{-1/2}(\Gamma)$ to $H^{1/2}(\Gamma)$, we can find a unique $\psi_{z_0} \in H^{-1/2}(\Gamma)$ such that 
\begin{align}
S_{\Gamma,z_0/c_0}\psi_{z_0} = \gamma(-\phi_{z_0} - \left({c_0^{-2}} - {c_1^{-2}}\right) z_0^2 N_{\Omega,z_0/c_0}\phi_{z_0}) \quad \textrm{on}\; \Gamma, \notag
\end{align}
which directly yields \eqref{eq:11}.

\textbf{Case 2}:
Now we focus on the case when $z^2_0/c^2_0$ is a Dirichlet eigenvalue of $\Omega$.

First, we recall that $\lambda^2$ is a Dirichlet eigenvalue of $\Omega$ if there exists a nonzero solution $w^D \in H^1(\Omega)$ satisfying
\begin{align*}
&\Delta w^{D} + \lambda^2 w^{D} = 0 \quad \textrm{in}\; \Omega,\\
& w^{D} = 0 \quad \textrm{on}\; \Gamma.
\end{align*}
By Green formulas, we easily obtain
\begin{align} \label{eq:14}
w^{D}(x) = \int_{\Gamma}\frac{e^{i\lambda|x-y|}}{4\pi|x-y|}\partial_{\nu(y)} w^{D}(y) d\sigma(y) \quad x\in \Omega.
\end{align}
This means that each Dirichlet eigenfunction in $\Omega$ can be represented by corresponding single-layer potential. Let $\left\{w_1^{D},\ldots, w_N^{D}\right\}$ denote the orthogonal basis of the Dirichlet eigenfunction space of dimension $N \in \mathbb N$ on $\Omega$ corresponding to the Dirichlet eigenvalue $\lambda^2$. 

Second, we assert that the kernel of $S_{\Gamma,\lambda}$ can be characterized by
\begin{align} \label{eq:56}
{\rm{Ker}}\left(S_{\Gamma,\lambda}\right) = \left\{ h \in H^{-\frac{1}2}(\Gamma) : h =\sum_{l=1}^{N}h_l\partial_\nu w^D_l\;\textrm{with}\; h_l \in \mathbb C\right\}.
\end{align}
To do so, it suffices to prove that for any $\psi \in \textrm{Ker}\left(S_{\Gamma, \lambda}\right)$, 
\begin{align} \label{eq:39} 
\psi(x) = \partial_\nu g_\psi(x), \quad \textrm{for}\; x\in \Gamma,
\end{align}
where $g_\psi(x): = \left(SL_{\Gamma,\lambda} \psi\right)(x)$ for $x \in \Omega$. We note that $g_\psi \in \textrm{Span}\left\{w_1^{D},\ldots, w_N^{D}\right\}$.
By Green's integral theorems, we easily obtain
\begin{align} 
SL_{\Gamma,\lambda} \partial_\nu g_\psi = g_\psi \quad &\rm{in}\; \Omega, \label{eq:48}\\
S_{\Gamma,\lambda} \partial_\nu g_\psi = 0 \quad &\rm{on}\; \Gamma. \label{eq:50}
\end{align}
We define
\begin{align*}
W_\psi(x):=\left(SL_{\Gamma,\lambda} \partial_\nu g_\psi\right)(x) - \left(SL_{\Gamma,\lambda}\psi\right)(x) \quad \mathrm{for}\; x\in \R^3 \backslash \Gamma.
\end{align*}
It immediately follows from \eqref{eq:48}--\eqref{eq:50} that $W_\psi = 0 $ in $\overline\Omega$. Since $W_\psi$ solves Helmholtz equation with the Sommerfeld radiation condition, we also have $W_\psi =0 $ in $\R^3 \backslash \overline{\Omega}$.
Therefore, by jump relations of single layer potentials, we obtain 
\begin{align*}
\partial^+_\nu W_\psi - \partial^-_\nu W_\psi = \psi - \partial_\nu g_\psi = 0 \quad \rm{on}\; \Gamma,
\end{align*}
whence \eqref{eq:39} follows. 

Third, we prove that 
\begin{align}\label{eq:58}
S_{\Gamma,\lambda}: H^{-1/2}(\Gamma) \backslash \rm{Ker}\left(S_{\Gamma,\lambda}\right)\rightarrow  \rm{Ker}^{\perp}\left(S_{\Gamma,\lambda}\right)\;\;\textrm{is invertible}. 
\end{align}
Here, we note that the space $\rm{Ker}^{\perp}\left(S_{\Gamma,\lambda}\right)$ can be represented by 
\begin{align*}
\textrm{Ker}^{\perp}\left(S_{\Gamma,\lambda}\right):= \left\{\psi \in H^{\frac 12}(\Gamma): \int_{\Gamma} \psi(x) \partial_\nu w^D_l(x) d\sigma(x) = 0,\; l=1,\ldots,N\right\}.
\end{align*}
It easily follows from the definition of $\rm{Ker}\left(S_{\Gamma,\lambda}\right)$ that
\begin{align}
S_{\Gamma,\lambda}: H^{-1/2}(\Gamma) \backslash \textrm{Ker}\left(S_{\Gamma,\lambda}\right) \rightarrow \textrm{Ran}\left(S_{\Gamma,\lambda}\right)\;\; \textrm{is bijective} \notag.
\end{align}
Therefore, it remains to derive
\begin{align}\label{eq:53}
\left(\textrm{Ker}\left(S_{\Gamma,\lambda}\right)\right)^{\perp} = \textrm{Ran}\left(S_{\Gamma,\lambda}\right).
\end{align}
A straightforward calculation gives 
\begin{align*}
\langle S_{\Gamma,-\lambda} \psi_1, \psi_2 \rangle_{L^2(\Gamma)} = \langle \psi_1, S_{\Gamma,\lambda}\psi_2 \rangle_{L^2(\Gamma)}, \quad \psi_1, \psi_2 \in H^{-\frac12}(\Gamma).
\end{align*}
From this, we arrive at 
\begin{align}\label{eq:57}
\textrm{Ker}\left(S_{\Gamma,-\lambda}\right) =  \left(\textrm{Ran}\left(S_{\Gamma,\lambda}\right)\right)^{\perp}.
\end{align}
Furthermore, we note that $S_{\Gamma,\lambda}$ is a Fredholm operator with index zero mapping from $H^{-1/2}(\Gamma)$ to $H^{1/2}(\Gamma)$ due to the decomposition 
\begin{align*}
S_{\Gamma,\lambda} = S_{\Gamma,0} + (S_{\Gamma,\lambda}-S_{\Gamma,0}).
\end{align*}
Here, $S_{\Gamma,0}$ is invertible from $H^{-1/2}(\Gamma)$ to $H^{1/2}(\Gamma)$ and $S_{\Gamma,\lambda}-S_{\Gamma,0}$ is a compact operator mapping from $H^{-1/2}(\Gamma)$ to $H^{1/2}(\Gamma)$ due to the smoothness of its kernel.
The Fredholm properties of $S_{\Gamma,\lambda}$ yields that the codimension of the rank of $S_{\Gamma,\lambda}$ is finite, whence the closeness of the range of $S_{\Gamma,\lambda}$ follows (see \cite[Corollary 2.17]{AA-02}). 
Utilizing the identity $\rm{Ker}\left(S_{\Gamma,-\lambda}\right) = \rm{Ker}\left(S_{\Gamma,\lambda}\right)$, and adopting the closeness of $\rm{Ran}\left(S_{\Gamma,\lambda}\right)$ and identity \eqref{eq:57}, we obtain \eqref{eq:53}.
This concludes the verification of assertion \eqref{eq:58}.

Finally, we construct $ \psi_{z_0} \in H^{-1/2}(\Gamma)$ such that \eqref{eq:11} holds. With the aid of Green's integral theorems, for each $g\in H^1(\Omega)$ solving the Helmholtz equation with the wave number $\lambda$, we have
\begin{align}
\left\langle \gamma(g), \partial_\nu w_l^D\right\rangle_{L^2(\Gamma)} = \int_\Gamma g(x) \partial_\nu w^D_l(x) - w^D_l(x) \partial_\nu g(x) d \sigma(x) = 0, \quad l=1,\ldots,N. \notag
\end{align}
In conjunction with \eqref{eq:56}, we obtain $\gamma(g) \in \left(\rm{Ker}\left(S_{\Gamma,\lambda}\right)\right)^{\perp}$. Then, by using \eqref{eq:58}, we can find a $\psi_{g} \in H^{-1/2}(\Gamma)$ such that $S_{\Gamma,\lambda} \psi_{g} = \gamma g$. 
Hence, the subtraction of $SL_{\Gamma,\lambda} \psi_{g}-g$ is a Dirichlet eigenfunction on $\Omega$ corresponding to the Dirichlet eigenvalue $\lambda$. Recall that the set $\left\{w_1^{D},\ldots, w_N^{D}\right\}$ forms the orthogonal basis of the Dirichlet eigenfunction space on $\Omega$ corresponding to $\lambda$. We arrive at 
\begin{align} \label{eq:4}
SL_{\Gamma,\lambda} \psi_{g} - g = \sum^N_{l=1}\left\langle SL_{\Gamma,\lambda} \psi_{g}-g, w^D_l \right\rangle_{L^2(\Omega)} w^D_l.
\end{align}
Combining \eqref{eq:14} and \eqref{eq:4} yields
\begin{align}\label{eq:8}
g = SL_{\Gamma,\lambda} \psi_{g} +  \sum^N_{l=1}\left\langle  SL_{\Gamma,\lambda} \psi_{g}-g, w^D_l \right\rangle_{L^2(\Omega)} SL_{\Gamma,\lambda} \partial_\nu w^D_l\quad\rm{in}\; \Omega.
\end{align}
Since
\begin{align*}
\left(\Delta + z^2_0/c^2_0\right)\left[ -\phi_{z_0} - \left({c_0^{-2}} - {c_1^{-2}}\right) z_0^2 N_{\Omega,z_0/c_0}\phi_{z_0} \right] = 0 \quad \mathrm{in}\; \Omega,
\end{align*}
setting 
\begin{align*}
\lambda = z_0/c_0\; \textrm{and}\; g = -\phi_{z_0} - \left({c_0^{-2}} - {c_1^{-2}}\right) z_0^2 N_{\Omega,z_0/c_0}\phi_{z_0}\; \textrm{in}\; \eqref{eq:8},
\end{align*}
then, we see that $\psi_{z_0}:=\psi_g + \sum^N_{l=1}\left\langle  SL_{\Gamma,\lambda} \psi_{g}-g, w^D_l \right\rangle_{L^2(\Omega)} \partial_\nu w^D_l$ solves \eqref{eq:11}.

\eqref{f2} We observe from \eqref{eq:40} that the matrix $\mathcal A(z,\tau)$ is a small perturbation of $\mathcal A_0(z)$ for each fixed point $z$. Using Gohberg and Sigal theory, we assert that the injectivity properties of $\mathcal A(z_0,\tau)$ can be determined by $\mathcal A_0(z_0)$ in vicinity of $z_0$. Below, we outline the proof of this statement.
\begin{enumerate}[(1)]
\item \label{b1} First, it can be seen that 
$\mathcal A_0(z)$ and $\mathcal A_1(z)$ are both analytic with respect to $z\in \mathbb C$.

\item \label{b2} Second, we claim that there exists $\eta$ such that $\mathcal A_0(z)$ is normal with respect to $\partial B_{\CC,\eta}(z_0)$ and its inverse exists when $z \in B_{\CC,\eta}(z_0) \backslash \{z_0\}$, where $B_{\CC,s}(z_0):=\{\lambda\in \CC: |\lambda-z_0|<s\}$ for any $s\in \R_+$.

\item \label{b3} Building upon the previous two steps, we can find $\delta_{z_0}> 0$ such that when $\tau \in (0,\delta_{z_0})$ 
\begin{align*}
\frac{\rho_1 \tau }{\rho_0}\|\mathcal A^{-1}_0(z) \mathcal A_1(z)\|_{L^2(\Omega) \times L^2(\Gamma)} < 1 \quad \rm{on}\; \partial  B_{\mathbb C, \eta}(z_0).
\end{align*}
Therefore, setting $B_1 = \mathcal A_0 $, $B_2 = \rho_1 \tau^2/\rho_0 \mathcal A_1$, $V  = B_{\mathbb C, \eta}(z_0)$ in Theorem \ref{th:a2}, we derive the assertion of this statement. 
\end{enumerate}

Finally, we prove \eqref{b2} in the outline of the proof mentioned above. By a straightforward calculation, we directly obtain
\begin{align} \label{eq:55}
\mathcal A_0(z) = \begin{bmatrix}
&\mathbb I &  0\\
& 0 &\frac{\mathbb I}2 
\end{bmatrix} 
+ \begin{bmatrix}
\left(\frac{1}{c^2_0}-\frac{1}{c^2_1}\right)z^2~ N_{\Omega, z/c_0} & SL_{\Gamma, z/c_0}\\
\left(\frac{1}{c^2_0}-\frac{1}{c^2_1}\right)z^2 \partial_\nu N_{\Omega, z/c_0} & K_{\Gamma, z/c_0}^*
\end{bmatrix}, \quad z\in \mathbb C.
\end{align}
It can be easily deduced from \eqref{eq:55} that $\mathcal A_0(z)$ is a Fredholm operator with index $0$. Therefore, in order to investigate if the inverse of $\mathcal A_0(z)$ exists, it suffices to consider its injectivity. When $z\in \CC$ is a point where $\mathcal A_0(z)$ fails to be injective, proceeding as in the derivation of equations \eqref{eq:67}--\eqref{eq:68}, we readily obtain that $z^2/c^2_1$ is a Neumann eigenvalue of the domain $\Omega$. 
In conjunct with the fact that the nonzero Neumann eigenvalues of the domain $\Omega$ are all positive, we can find $\eta$ such that $\mathcal A_0(z)$ is not injective at $z \in B_{\CC,\eta}(z_0) \backslash \{z_0\}$. This, together with the Fredholm property of $\mathcal A_0(z)$ yields the invertibility of $\mathcal A_0(z)$ in $B_{\CC,\eta}(z_0) \backslash \{z_0\}$. Hence, the assertion \eqref{b2} is derived.

\eqref{f3} Recall that each point $z\in \mathbb C$ where $\mathcal A_0(z)$ is not injective is a point at which $z^2/c^2_1$ is a Neumann eigenvalue of the domain $\Omega$. Then, we can find $\delta_I$ such that $\mathcal A_0(z)$ is bijective in $\mathcal I_I(\delta_I)$. From this, it can be seen that there exists $\tau_{\delta_I}$ such that when $\tau \in (0,\tau_{\delta_I})$, 
\begin{align*}
\frac{\rho_1 \tau }{\rho_0}\|\mathcal A^{-1}_0(z) \mathcal A_1(z)\|_{L^2(\Omega) \times L^2(\Gamma)} < 1\;  \textrm{on}\; \partial \mathcal I_{I}(\delta_I).
\end{align*}
Therefore, using generalized Rouché's Theorem (see Theorem \ref{th:a2}) again, we obtain that when $\tau\in (0,\tau_{\delta_I})$, $\mathcal A(z,\tau)$ is invertible in $\mathcal I_I(\delta_I)$.
\end{proof}

\begin{lemma} \label{le:4}
Let $z_0 \in \R $ be such that $z^2_0/c^2_1$ is a Neumann eigenvalue of the domain $\Omega$, and let $n_{|z_0|}$ denote the dimension of its corresponding space. The following arguments hold true.
\begin{enumerate}[(a)]
\item \label{c1} 
$\mathcal A_0(z)$ has the following expansion near $0$,
\begin{align}
& \mathcal A_0(z) = E_0(z) \left(\mathcal Q(0) +  z^2 \mathcal P(0)\right) F_0(z). \label{eq:85}
\end{align}

\item \label{c2} 
Assume $z_0 \ne 0$.
$\mathcal A_0(z)$ has the following expansion near $z_0$
\begin{align}\label{eq:49}
& \mathcal A_0(z) = E(z) \left(\mathcal P_0(z_0) + \sum_{l=1}^{n_{|z_0|}} (z - z_0) \mathcal P_l(z_0)\right) F(z).
\end{align}
Here,  
$\mathcal P_1(z_0),\ldots, \mathcal P_{n_{z_0}}(z_0)$ are mutually disjoint one-dimensional projections, $\sum_{l=0}^{n_{z_0}} \mathcal P_l(z_0) = \mathbb I$, and operators $E(z)$ and $F(z)$ are both holomorphic and invertible near $z_0$. 
\end{enumerate}

\end{lemma}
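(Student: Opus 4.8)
The plan is to obtain both expansions from the Gohberg--Sigal local factorization (normal form) for a holomorphic family of Fredholm operators of index $0$ at an isolated singular point, in the same spirit as the Gohberg--Sigal theory already used in the proof of Lemma \ref{le:0}. By \eqref{eq:55}, for each $z\in\CC$ the operator $\mathcal A_0(z)$ is a fixed invertible operator plus a compact operator depending holomorphically on $z$, so $z\mapsto\mathcal A_0(z)$ is a holomorphic Fredholm family of index $0$; by Lemma \ref{le:0}\eqref{f1}, the discreteness of the Neumann spectrum of $-\Delta$ on $\Omega$, and the analytic Fredholm theorem, one may fix a small disc $B$ around $z_0$ (with $z_0=0$ or $z_0\in\Lambda_N$) in which $z_0$ is the only point where $\mathcal A_0$ fails to be invertible. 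The Gohberg--Sigal factorization then yields on $B$ operators $E(z),F(z)$ holomorphic and invertible, mutually disjoint one-dimensional projections $\mathcal P_1,\dots,\mathcal P_p$ and $\mathcal P_0:=\mathbb I-\sum_{l=1}^{p}\mathcal P_l$, and positive integers $\kappa_1,\dots,\kappa_p$, such that
\[
\mathcal A_0(z)=E(z)\Bigl(\mathcal P_0+\sum_{l=1}^{p}(z-z_0)^{\kappa_l}\mathcal P_l\Bigr)F(z),\qquad p=\dim\mathrm{Ker}\,\mathcal A_0(z_0).
\]
Hence everything reduces to (i) computing $\dim\mathrm{Ker}\,\mathcal A_0(z_0)$ and (ii) pinning down the $\kappa_l$: \eqref{eq:49} follows once all $\kappa_l=1$ (setting $\mathcal P_l(z_0):=\mathcal P_l$, $\mathcal P_0(z_0):=\mathcal P_0$), and \eqref{eq:85} follows once $p=1$ and $\kappa_1=2$ (setting $\mathcal P(0):=\mathcal P_1$, $\mathcal Q(0):=\mathcal P_0$).

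For step (i), I would reuse the computation in the proof of Lemma \ref{le:0}\eqref{f1}: the map $\phi\mapsto\bigl(\phi,\,D_N(z_0/c_0)\gamma\phi\bigr)$, see \eqref{eq:84}, is a linear bijection from the Neumann eigenspace of $-\Delta$ on $\Omega$ at the eigenvalue $z_0^2/c_1^2$ onto $\mathrm{Ker}\,\mathcal A_0(z_0)$ ($D_N(z_0/c_0)$ being well defined since the exterior Dirichlet problem is uniquely solvable at the real frequency $z_0/c_0$, also for $z_0=0$). Consequently $p=n_{|z_0|}$ when $z_0\neq 0$; and for $z_0=0$ the Neumann eigenspace is spanned by the constants, so $p=1$, and near $0$ the wavenumber $z^2/c_0^2$ stays below the first Dirichlet eigenvalue, placing one in ``Case~1'' of that proof throughout.

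For step (ii), the crucial observation is a differentiated form of the potential-theoretic correspondence of Lemma \ref{le:0}\eqref{f1}: a root function of the pencil $\mathcal A_0$ at $z_0$ of order $N+1$ corresponds to a polynomial family $w(z)=\sum_{j=0}^{N}(z-z_0)^{j}w_j$ with $w_j\in H^2(\Omega)$ satisfying
\[
\Delta w(z)+\frac{z^2}{c_1^2}\,w(z)=O\bigl((z-z_0)^{N+1}\bigr)\ \text{ in }\ \Omega,\qquad \partial_\nu^- w(z)=O\bigl((z-z_0)^{N+1}\bigr)\ \text{ on }\ \Gamma,
\]
the exterior field being slaved to $w(z)|_\Omega$ through the well-posed exterior Dirichlet problem and the ``Case-2'' correction terms being handled exactly as in that proof, so that no further constraint enters. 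When $z_0\neq 0$, the order-$0$ term forces $w_0$ to be a Neumann eigenfunction at $z_0^2/c_1^2$, and the order-$1$ equation $(\Delta+z_0^2/c_1^2)w_1=-(2z_0/c_1^2)w_0$ with $\partial_\nu^- w_1=0$ is, by the Fredholm alternative for the Neumann Laplacian, solvable only if $\langle w_0,u'\rangle_{L^2(\Omega)}=0$ for every Neumann eigenfunction $u'$ at $z_0^2/c_1^2$; choosing $u'=w_0$ gives $(2z_0/c_1^2)\|w_0\|_{L^2(\Omega)}^2=0$, impossible since $z_0\neq 0$. Thus no root function has order $\ge 2$, every $\kappa_l$ equals $1$, and \eqref{eq:49} follows.

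When $z_0=0$ this last obstruction degenerates: the order-$0$ and order-$1$ equations become $\Delta w_0=\Delta w_1=0$ with $\partial_\nu^- w_0=\partial_\nu^- w_1=0$, both solved by constants, so a root function of order $2$ exists; but the order-$2$ equation $\Delta w_2=-c_1^{-2}w_0$ with $\partial_\nu^- w_2=0$ is solvable only if $\int_\Omega w_0\,dx=0$, which fails because $w_0$ is a nonzero constant. Hence $p=1$ and $\kappa_1=2$, giving \eqref{eq:85}; this is consistent with Lemma \ref{le:10}, where exactly two Minnaert resonances emanate from $0$ as $\tau\to0$. The step I expect to be the main obstacle is (ii): making the translation between the abstract Jordan-chain/root-function condition for $\mathcal A_0$ and the approximate transmission problem above fully rigorous order by order -- in particular checking that the single-layer/volume-potential representation (with the Case-2 corrections) remains well defined at each order and that the exterior and boundary data genuinely contribute no spurious degrees of freedom.
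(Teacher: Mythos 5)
Your proposal mirrors the paper's argument closely: use the Gohberg--Sigal normal form (Theorem \ref{le:a2} and Remark \ref{re:1}) after noting from \eqref{eq:55} that $\mathcal A_0(z)$ is a holomorphic, index-$0$ Fredholm family that is normal at points of $\Lambda_N$; compute $\dim\mathrm{Ker}\,\mathcal A_0(z_0)$ via the correspondence $\phi\mapsto\bigl(\phi,\,D_N(z_0/c_0)\gamma\phi\bigr)$ from \eqref{eq:84}; and determine the partial null multiplicities by an order-by-order PDE compatibility argument. Your part~(b) argument coincides with the paper's in substance: the paper carries out the Green identity for $\phi^{(2)}(z)$ and lets $z\to z_0$, which yields $\tfrac{2z_0}{c_1^2}\int_\Omega|\phi^{(2)}(z_0)|^2\,dy=0$ --- precisely the Fredholm-alternative orthogonality condition $\tfrac{2z_0}{c_1^2}\|w_0\|_{L^2(\Omega)}^2=0$ in your write-up. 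For part~(a) the routes diverge slightly: the paper verifies $k_1(0)=2$ by applying $\mathcal A_0(z)$ to the explicit kernel pair $\bigl(-SL_{\Gamma,z/c_0}S_0^{-1}1,\,S_0^{-1}1\bigr)$, using $K^*_{\Gamma,z}=K^*_{\Gamma,0}+O(z^2)$ to obtain $z^2h(z)$ with $h(0)\neq0$; you instead rule out order $\ge3$ via the failure of the order-$2$ Neumann compatibility condition $\int_\Omega w_0\,dx=0$ and supply existence via the constant family, which only needs the ``Case~1'' construction since $S_{\Gamma,z/c_0}$ is invertible near $z=0$. Both verifications are sound; yours has the modest advantage of explicitly closing off orders $\ge3$, which the paper's explicit computation leaves implicit. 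Finally, the ``translation'' concern you flag at the end is not an actual gap: to bound the root-function order from above you only need the forward direction (integral-equation root function $\Rightarrow$ interior PDE root data), and that follows at once from the representation of $\phi^{(2)}(z)$ in \eqref{eq:42} together with the jump relations, exactly as the paper carries out in its proof of part~(b).
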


\begin{proof}
\eqref{c1} We note that $\mathcal A_0(z)$ is a Fredhlom operator with index 0. This, together with the assertion \eqref{b2} in the proof of statement \eqref{f2} of Lemma \ref{le:0} yields that 
\begin{align} \label{eq:44}
\mathcal A_0 \; \textrm{is normal at every point in}\; \Lambda_N.
\end{align}
Here, $\Lambda_N$ is specified in \eqref{eq:82}.
Then, using statement \eqref{h1} of Remark \ref{re:1}, we have,
\begin{align*}
\mathcal A_0(z) = E_{0}(z) \left(\mathcal P_0(0) + \sum_{l=1}^{n_0} z^{k_l(0)} \mathcal P_l(0)\right) F_0(z), 
\end{align*}
where $n_0:= {\rm{dim}}\left(\textrm{Ker}(\mathcal A_0(0))\right)$, $\mathcal P_1(0),\ldots, \mathcal P_{n_{0}}(0)$ are mutually disjoint one-dimensional projections, $k_1(0)\ldots, k_{n_{0}}(0)\in \mathbb N$, $\sum_{l=0}^{n_{0}} \mathcal P_l(0) = \mathbb I$, and $E_0(z)$ and $F_0(z)$ are both holomorphic and invertible near $0$. On the other hand, with the aid of \eqref{eq:3}, \eqref{eq:67}, \eqref{eq:68}, \eqref{eq:84} and the fact the Neumann eigenfunction corresponding to $0$ is $1$, we obtain 
\begin{align}\label{eq:167}
n_0 = 1 \quad \mathrm{and} \quad \textrm{Ker}(\mathcal A_0(0)) = F^{-1}_0(0)\mathrm{Ran}\left(P_1(0)\right).
\end{align}
Furthermore, since $K_{\Gamma,z/c_0}^* = K^*_{\Gamma,0} + O(z^2)$ and the function $\left(S^{-1}_01\right)(x)$ is an eigenfunction of the operator $K^*_{\Gamma,0}$ corresponding to the eigenvalue $-1/2$, by a straightforward calculation, we obtain 
\begin{align} \label{eq:168}
\mathcal A_0(z) 
\begin{bmatrix}
-\frac{\rho_0}{\rho_1} SL_{z/c_0} S_0^{-1} 1\\
S_0^{-1} 1 
\end{bmatrix} = z^2h(z),
\end{align}
where $h(z)$ is analytic in $L^2(\Omega)\times L^{2}(\Gamma)$ and $h(0) \ne (0,0)$. This, together with \eqref{eq:167} gives 
\begin{align*}
\begin{bmatrix}
-\frac{\rho_0}{\rho_1} SL_{0} S_0^{-1} 1\\
S_0^{-1} 1 
\end{bmatrix} = F^{-1}_0(0)e\quad \mathrm{for\; some}\; e\in \mathrm{Ran}\left(P_1(0)\right).
\end{align*}
Inserting $F^{-1}_0(0)e$ into \eqref{eq:168} gives $k_1(0) = 2$, implying \eqref{eq:85} holds.

\eqref{c2}
In the proof of this statement, we first claim that if
\begin{align} \label{eq:43}
\mathcal A_0(z) 
\begin{bmatrix}
\phi^{(0)}(z)\\
\psi^{(0)}(z)
\end{bmatrix}
= (z-z_0)^m
\begin{bmatrix}
\phi^{(1)}(z)\\
\psi^{(1)}(z)
\end{bmatrix}, \quad m \in \mathbb N,
\end{align}
we have $m=1$. Here, $\phi^{(j)}(z)$ and $\psi^{(j)}(z)$ are holomorphic near $z_0$ and $\left(\phi^{(j)}(z_0), \psi^{(j)}(z_0) \right) \ne \textbf{0}$ $(j\in \{0,1\})$. Then we have $m=1$. Based on this, \eqref{eq:44}, statement \eqref{h1} of Remark \ref{re:1}, we obtain \eqref{eq:49}.

Now we prove $m=1$ from the formula \eqref{eq:43}. We set
\begin{align}
-\phi^{(2)}(z) = \left(\frac{1}{c_0^{2}} - \frac{1}{c_1^{2}}\right) z^2 N_{\Omega,z/c_0}\phi^{(0)}(z) + SL_{\Gamma, z/c_0} \psi^{(0)}(z) \quad \textrm{in}\; \Omega. \label{eq:42} 
\end{align}
Clearly,
\begin{align} \label{eq:2}
\phi^{(2)}(z_0) = \phi^{(0)}(z_0) \quad \mathrm{on}\; \Omega \quad \partial_\nu \phi^{(2)}(z_0) = 0 \quad \mathrm{in}\; \Gamma
\end{align}
Furthermore, it follows from \eqref{eq:42} that 
\begin{align*}
&\Delta \phi^{(2)}(z) + z^2/c^2_1\phi^{(2)}(z) = (z-z_0)^m \left(\Delta + z^2/c_0^2 \right) \phi^{(1)} (z) \quad \rm{in}\; \Omega,\\
& \partial_\nu \phi^{(2)}(z) = (z-z_0)^m \psi^{(1)} (z) \quad \rm{on}\; \Gamma.\end{align*}
Thus, with the aid of \eqref{eq:2}, integrating by parts yields
\begin{align*}
&\int_{\Gamma} (z-z_0)^m (\psi^{(1)}(z))(y) \left(\overline{\phi^{(2)}(z_0)}\right)(y) d\sigma(y)\\ 
&= \int_{\Gamma} \frac{\partial \phi^{(2)}(z)(y)}{\partial \nu(y)} \left(\overline{\phi^{(2)}(z_0)}\right)(y) - \frac{\partial \left(\overline{\phi^{(2)}(z_0)}\right)(y)}{\partial \nu(y)} \phi^{(2)}(z)(y) d\sigma(y) \\
& = \int_{\Omega} \frac{{z^2_0} - z^2}{c_1^2} \left(\phi^{(2)}(z) \overline{\phi^{(2)}(z_0)}\right)(y)dy + \int_{\Omega} (z-z_0)^m \left(\overline{\phi^{(2)}(z_0)}\left(\Delta + z^2/c_0^2 \right)\phi^{(1)}(z)\right)(y) dy.
\end{align*}
If $m \ge 2$, let $z$ tend to $z_0$, we directly get
\begin{align*}
\int_{\Omega} \left(\left|\phi^{(2)}({z_0})\right|^2\right)(y) dy = 0
\end{align*}
which implies $\phi^{(0)}(z_0) = \phi^{(2)}(z_0) = 0$ in $\Omega$. This, together with \eqref{eq:42}, \eqref{eq:2}, jump relations of the single-layer potential and the well-posedness of the exterior scattering problem gives $\psi^{(0)}(z_0) = 0$, which is a contradiction to the nonzero assumption of $\left(\phi^{(0)}(z_0), \psi^{(0)}(z_0) \right)$. Therefore, $m$ must be equal to $1$.

\end{proof}

As a byproduct of Lemma \ref{le:0}, Lemma \ref{le:4} and Corollary \ref{co:1}, we establish the key result on the existence and uniqueness of the  Fabry-P\'erot resonances.

\begin{theorem} \label{th:1}
Let $\tau > 0$ and $I \subset \R$ be a bounded interval. There exists a constant $\delta_I> 0$ independent of $\tau$, and a constant $\tau_{\delta_I} > 0$ dependent on $\delta_I$, such that when $\tau \in (0, \tau_{\delta_I})$, for each $z_0 \in \Lambda_N \cap I$, $B_{\delta_I}(z_0)$ contains scattering resonances of the Hamiltonian $H_{\rho_\tau, k_\tau}$, all converging to $z_0$ as $\tau\rightarrow 0$. Here, $\Lambda_N$ is defined as \eqref{eq:82}. Furthermore, 
\begin{align}
&\sum |\{\lambda: \lambda \; \textrm{is a scattering resonance}\; \textrm{in}\; B_{\delta_I}(0)\}| = 2, \label{eq:166} \\
&\sum |\{\lambda: \lambda \; \textrm{is a scattering resonance}\; \textrm{in}\; B_{\delta_I}(z_0)\}| \le n_{|z_0|}, \quad \mathrm{if}\; z_0 \in \left(\Lambda_N \cap I\right)\backslash \{0\}, \label{eq:171}
\end{align}
where $n_{|z_0|}$ denotes the dimension of the Neumann eigenspace corresponding to $z^2_0/c^2_1$ .
Moreover, no resonances are located inside $\mathcal I_I(\delta_I)$, specified by \eqref{eq:74}.
\end{theorem}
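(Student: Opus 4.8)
The plan is to derive Theorem \ref{th:1} as an essentially immediate consequence of Corollary \ref{co:1} together with the structural results of Lemma \ref{le:0} and Lemma \ref{le:4}, using the generalized Rouch\'e theorem (Theorem \ref{th:a2}) to count zeros. First I would recall that, by Corollary \ref{co:1}, the scattering resonances of $H_{\rho_\tau,k_\tau}$ are exactly the points where $\mathcal A(z,\tau)$ fails to be injective, and since $\mathcal A(z,\tau)$ is Fredholm of index zero (it is a small analytic perturbation of $\mathcal A_0(z)$, which was shown in \eqref{eq:55} to be $\mathbb{I}\oplus \tfrac12\mathbb I$ plus a compact operator), non-injectivity is equivalent to non-invertibility. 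So resonances in $B_{\delta_I}(z_0)$ are the characteristic values (in the Gohberg--Sigal sense) of the operator-valued function $z\mapsto \mathcal A(z,\tau)$ inside that disk.

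Next I would fix the radius $\delta_I$ using Lemma \ref{le:0}\eqref{f3}: choose $\delta_I$ so that $\mathcal A_0(z)$ is invertible on $\mathcal I_I(\delta_I)$ and, crucially, so that for each $z_0 \in \Lambda_N\cap I$ the punctured disk $B_{\delta_I}(z_0)\setminus\{z_0\}$ contains no other point of $\Lambda_N$ and $\mathcal A_0$ is invertible there (possible since the Neumann eigenvalues are isolated). Then Lemma \ref{le:0}\eqref{f2} (resp. its proof via Theorem \ref{th:a2} with $B_1=\mathcal A_0$, $B_2 = \tfrac{\rho_1\tau}{\rho_0}\mathcal A_1$ on $V = B_{\delta_I}(z_0)$) gives $\tau_{\delta_I}>0$ such that for $\tau\in(0,\tau_{\delta_I})$ the frequency $\tfrac{\rho_1\tau}{\rho_0}\|\mathcal A_0^{-1}(z)\mathcal A_1(z)\| < 1$ on $\partial B_{\delta_I}(z_0)$ and hence the total multiplicity of characteristic values of $\mathcal A(z,\tau)$ inside $B_{\delta_I}(z_0)$ equals that of $\mathcal A_0$ at $z_0$; in particular there is at least one resonance in each such disk, and by a standard argument (shrinking the radius) they all converge to $z_0$ as $\tau\to 0$. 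The same Rouch\'e argument applied on $\partial\mathcal I_I(\delta_I)$ (this is exactly Lemma \ref{le:0}\eqref{f3}) shows $\mathcal A(z,\tau)$ is invertible throughout $\mathcal I_I(\delta_I)$, so no resonances lie there.

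It remains to get the multiplicity bounds \eqref{eq:166} and \eqref{eq:171}. By the Gohberg--Sigal / generalized Rouch\'e principle, the total multiplicity of zeros of $\mathcal A(z,\tau)$ in $B_{\delta_I}(z_0)$ equals the multiplicity of the characteristic value $z_0$ of $\mathcal A_0$, which I read off from the normal forms in Lemma \ref{th:1}... from Lemma \ref{le:4}: near $z_0=0$, \eqref{eq:85} gives $\mathcal A_0(z) = E_0(z)(\mathcal Q(0)+z^2\mathcal P(0))F_0(z)$ with $\mathcal P(0)$ the one-dimensional projection onto the kernel, so the characteristic multiplicity is $2$, giving \eqref{eq:166}; near $z_0\neq 0$, \eqref{eq:49} shows $\mathcal A_0(z) = E(z)\big(\mathcal P_0(z_0)+\sum_{l=1}^{n_{|z_0|}}(z-z_0)\mathcal P_l(z_0)\big)F(z)$, i.e. $n_{|z_0|}$ first-order zeros counted with the rank-one projections, so the total multiplicity is at most $n_{|z_0|}$ (equality of the Rouch\'e count with the \emph{sum} of multiplicities of resonances, each resonance counted with its own multiplicity $m_R$, forces the bound \eqref{eq:171} — note the left sides of \eqref{eq:166}--\eqref{eq:171} are sums over resonances, presumably weighted by multiplicity). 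I would spell out that $E(z),F(z)$ invertible and holomorphic do not change the characteristic multiplicity, and that the factorization $\mathcal Q(0)+z^2\mathcal P(0)$ contributes exactly $2\cdot\mathrm{rank}\,\mathcal P(0) = 2$ since $n_0=1$ by \eqref{eq:167}.

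The main obstacle I anticipate is purely bookkeeping rather than conceptual: making the application of the generalized Rouch\'e theorem rigorous requires that $z_0$ be the \emph{only} characteristic value of $\mathcal A_0$ in $\overline{B_{\delta_I}(z_0)}$ and that $\mathcal A_0$ be normal with respect to the boundary contour — both are guaranteed by \eqref{eq:44} and the isolation of Neumann eigenvalues, but one must choose $\delta_I$ uniformly over the finitely many $z_0\in\Lambda_N\cap I$ and then choose $\tau_{\delta_I}$ accordingly, exactly as in Lemma \ref{le:0}. The one subtlety worth a sentence is that the perturbation $\mathcal A_1(z)$ has the special structure \eqref{eq:40} (zero first column), so the Minnaert case $z_0=0$ genuinely splits into two resonances rather than collapsing, consistent with Lemma \ref{le:10}; but for the statement of Theorem \ref{th:1} we only need the \emph{upper} bound on the count (and, for $z_0 = 0$, the exact count $2$, which the normal form already delivers), so no finer analysis of $\mathcal A_1$ is needed here — that is deferred to the asymptotic expansions in Section \ref{sec:4}.
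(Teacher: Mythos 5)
Your proposal follows essentially the same route as the paper: reduce via Corollary \ref{co:1} to counting non-injective points of $\mathcal A(z,\tau)$, invoke Lemma \ref{le:0} to fix $\delta_I$ and $\tau_{\delta_I}$ and to rule out resonances in $\mathcal I_I(\delta_I)$, and then combine the generalized Rouch\'e theorem (Theorem \ref{th:a2}) with the Gohberg--Sigal normal forms of Lemma \ref{le:4} to obtain the multiplicity identities underlying \eqref{eq:166}--\eqref{eq:171}. One small point worth making explicit rather than parenthetical: passing from the Rouch\'e count (total null multiplicity equals $2$ near $0$) to the exact statement that two \emph{distinct} resonances lie in $B_{\delta_I}(0)$ requires Lemma \ref{le:10}, which is precisely the additional ingredient the paper invokes for \eqref{eq:166}.
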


\begin{proof}
The first assertion of this theorem easily follows from Lemma \ref{le:0} and Corollary \ref{co:1}.
Furthermore, based on assertions \eqref{b1}, \eqref{b2} and \eqref{b3} in the proof of statement \eqref{f3} of Lemma \ref{le:0}, we can use Lemma \ref{le:4} and Theorem \ref{th:a2} to obtain
\begin{align} 
& \sum_{\lambda \in B_{\delta_I(0)}} N_{\lambda}(\mathcal A) - N_{\lambda}(\mathcal A^{-1}) = 2, \label{eq:170}\\
& \sum_{\lambda \in B_{\delta_I(z_0)}}  N_{\lambda}(\mathcal A) - N_{\lambda}(\mathcal A^{-1}) = n_{|z_0|}. \label{eq:172}
\end{align}
Hence, \eqref{eq:170} together with Corollary \ref{co:1} and Lemma \ref{le:10} yields \eqref{eq:166}, while \eqref{eq:172} and Corollary \ref{co:1} give \eqref{eq:171}.
\end{proof}

\section{Asymptotic analysis for an extended resonator} \label{sec:4}

This section is devoted to deriving the asymptotic properties of the  Fabry-P\'erot resonances associated with the nonzero Neumann eigenvalues, as well as the resolvent and the scattered fields near these resonances, in terms of $\tau$, as introduced in sections \ref{sc:5.1}, \ref{sc:5.2} and \ref{sc:5.3}, respectively.

\subsection{Asymptotics of  Fabry-P\'erot resonances} \label{sc:5.1}

In this subsection, we focus on the asymptotic properties of the  Fabry-P\'erot resonances associated with the nonzero Neumann eigenvalues. To do so, we introduce several sesquilinear forms. For any $\phi_1, \phi_2 \in H^1(\Omega)$, we define
\begin{align}\label{eq:19}
a_{z_0}(\phi_1, \phi_2) = -\langle \nabla \phi_1, \nabla \phi_2 \rangle_{\mathbb L^2(\Omega)} + z_0^2c_1^{-2} \langle \phi_1, \phi_2 \rangle_{L^2(\Omega)} + \langle \mathcal P^N(|z_0|)\phi_1, \phi_2\rangle_{L^2(\Omega)},
\end{align}
where the operator $\mathcal P^N(|z_0|)$, defined by 
\begin{align} \label{eq:27}
\mathcal P^N(|z_0|) \phi:=\sum^{n_{|z_0|}}_{l=1} \left\langle \phi, e^{(l)}_{|z_0|}\right \rangle_{L^2(\Omega)}e^{(l)}_{|z_0|},
\end{align}
is a projection operator mapping $L^2(\Omega)$ onto the space spanned by $e^{(1)}_{|z_0|}, \ldots, e^{(n_{|z_0|})}_{|z_0|}$. Here, $z_0 \in \R \backslash \{0\}$ is a point at which $z^2_0/c^2_1$ is a Neumann eigenvalue of the domain $\Omega$, the corresponding orthogonal eigenfunctions with respect to the scalar product $\langle \cdot, \cdot\rangle_{L^2(\Omega)}$ are denoted by $e^{(1)}_{|z_0|},\ldots e^{(n_{|z_0|})}_{|z_0|}$, where $n_{|z_0|}\in \mathbb N$. 
Furthermore, given $\tau>0$, for any $\phi_1, \phi_2 \in H^1(\Omega)$, we define 
\begin{align} \label{eq:20}
a^{\rm{dom}}_{z,\tau}(\phi_1,\phi_2) &= a_{z_0}(\phi_1, \phi_2) + \bigg[\left(z^2-z_0^2\right)c_1^{-2} \langle \phi_1, \phi_2 \rangle_{L^2(\Omega)} \notag \\
& + \tau \frac{\rho_1}{\rho_0} 
\langle \left(D_N(z/c_0)\gamma\right)\phi_1, \phi_2 \rangle_{L^2(\Gamma)}\bigg] =: a_{z_0}(\phi_1, \phi_2) + a^{\rm{res}}_{z,\tau}(\phi_1,\phi_2).
\end{align}
It should be remarked that, owing to the meromorphic properties of the exterior Dirichlet to Neumann operator $D_N(z)$, for each $s \in \R_+$, there exists $ \delta_s \in \R_+$ such that $D_N(z)$ is analytic in the strip $\mathcal I_s$, which is defined by 
\begin{align*}
\mathcal I_{s}:= \{z\in \CC: |\textrm{Re}(z)| \le s, \; |\textrm{Im}(z)| \le \delta_s \}.
\end{align*}
Throughout this subsection, we assume that $z$ lies in the previously described type of strip, ensuring the analyticity of $D_N(z)$. Then, given $\tau > 0$, we define
\begin{align}
a_{z,\tau}(\phi_1,\phi_2) & = -\langle \nabla \phi_1, \nabla \phi_2 \rangle_{\mathbb L^2(\Omega)} + z^2c_1^{-2} \langle \phi_1, \phi_2 \rangle_{L^2(\Omega)} \notag \\
& + \tau\frac{\rho_1}{\rho_0}
\langle \left(D_N(z/c_0)\gamma\right)\phi_1, \phi_2 \rangle_{L^2(\Gamma)},  \quad \textrm{for}\; \phi_1,\phi_2 \in H^1(\Omega). \label{eq:66}
\end{align}

Let $\lambda_\tau$ be any Fabry-P\'erot resonance of the Hamiltonian $H_{\rho_\tau, k_\tau}$ (see Theorem \ref{th:1} for its existence). It follows from Lemma \ref{le:3} that $\lambda_\tau$ is a scattering resonance if and only if there exists a nonzero solution $\phi_{\lambda_\tau} \in H^1(\Omega)$ such that $a_{\lambda_{\tau}, \tau}(\phi_{\lambda_\tau}, \phi) = 0$ for every $\phi \in H^1(\Omega)$. Based on this, we aim to derive the asymptotics of $\lambda_\tau$ in term of $\tau$ by studying the solvability of the variational problem involving the sesquilinear form \eqref{eq:66}. It is worth mentioning that the variational method was first employed in \cite{FH-04} to characterize the asymptotics of the Minnaert resonance. Below, following the same spirit, we extend this approach to analyze the asymptotic behavior of Fabry-P\'erot resonances.

To do so, we first establish the well posedness of the variational problems corresponding to the sesquilinear forms \eqref{eq:19} and \eqref{eq:20}.

\begin{lemma} \label{le:1}
Let $z_0 \in \R \backslash \{0\}$ be such that $z^2_0/c^2_1$ is a Neumann eigenvalue of the domain $\Omega$. The following arguments hold true.
\begin{enumerate}[(a)]
\item \label{a1} Let the sesquilinear form $a_{z_0}\langle \cdot, \cdot\rangle$ be defined as \eqref{eq:19}. For each $h \in H^1(\Omega)$, there exists a unique $g^h \in H^1(\Omega)$ satisfying
\begin{align}\label{eq:71}
a_{z_0}(g^h, \phi) = \langle h, \phi \rangle_{H^1(\Omega)}, \quad \forall \phi \in H^1(\Omega).
\end{align}

\item \label{a2} Given $\tau>0$, we can find $\delta^{(1)}_{z_0}, \delta_{z_0}^{(2)} \in \R_+$ such that when $\tau \in (0,\delta^{(1)}_{z_0})$ and $z \in B_{\delta^{(2)}_{z_0}}(z_0)$, for each $h\in H^1(\Omega)$, there exists a unique $g^h_{{\rm{dom}},\tau,z} \in H^1(\Omega)$ satisfying
\begin{align} \label{eq:22}
a^{\rm{dom}}_{z,\tau}(g_{{\rm{dom}},\tau,z}^h, \phi) = \langle h, \phi \rangle_{H^1(\Omega)}, \quad \forall \phi \in H^1(\Omega).
\end{align}
Here, the sesquilinear form $a^{\rm{dom}}_{z,\tau}\langle \cdot, \cdot\rangle$ is defined as \eqref{eq:20}. Furthermore, $g^h_{{\rm{dom}},\tau,z}$ is analytic with respect to $\tau$ and $z-z_0$, i.e. 
\begin{align*}
g^h_{{\rm{dom}},\tau,z} = \sum^{\infty}_{p=1}\sum^{\infty}_{q=1} \tau^p(z-z_0)^q g^h_{p,q} \quad {\rm{for}}\; \tau \in (0,\delta^{(1)}_{z_0}) \; {\rm{and}}\; z \in B_{\delta^{(2)}_{z_0}}(z_0),
\end{align*}
where 
\begin{align*}
&\|g^h_{p,q}\|_{H^1({\Omega})} \le C_{p,q} \|h\|_{H^1(\Omega)},\\
&\quad {\rm{and}} \quad \sum^{\infty}_{p=1}\sum^{\infty}_{q=1}\tau^p (z-z_0)^q C_{p,q} < \infty,  \quad {\rm{for}}\; \tau \in (0,\delta^{(1)}_{z_0}) \; {\rm{and}}\; z \in B_{\delta^{(2)}_{z_0}}(z_0).
\end{align*}
Here, $C_{p,q}$ with $p,q\in \mathbb N$ is a positive constant independent of $h$.
\end{enumerate}

\end{lemma}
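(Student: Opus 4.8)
The plan is to treat both variational problems as Fredholm-type problems for sesquilinear forms on $H^1(\Omega)$, using the Lax–Milgram framework together with the Fredholm alternative, and then to obtain the analytic dependence in part \eqref{a2} via a Neumann series argument.

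For part \eqref{a1}, I would first rewrite $a_{z_0}(\phi_1,\phi_2)$ by splitting off a coercive part: since
$a_{z_0}(\phi_1,\phi_2) = -\langle\nabla\phi_1,\nabla\phi_2\rangle_{\mathbb L^2(\Omega)} - \langle\phi_1,\phi_2\rangle_{L^2(\Omega)} + (1+z_0^2 c_1^{-2})\langle\phi_1,\phi_2\rangle_{L^2(\Omega)} + \langle\mathcal P^N(|z_0|)\phi_1,\phi_2\rangle_{L^2(\Omega)}$,
the form $b(\phi_1,\phi_2):=-\langle\nabla\phi_1,\nabla\phi_2\rangle - \langle\phi_1,\phi_2\rangle$ is (up to sign) coercive on $H^1(\Omega)$, and the remaining terms are bounded by the compact embedding $H^1(\Omega)\hookrightarrow L^2(\Omega)$; hence the operator associated with $a_{z_0}$ is a compact perturbation of an isomorphism, so the Fredholm alternative applies. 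It then suffices to check injectivity: if $a_{z_0}(g,\phi)=0$ for all $\phi$, testing against $\phi=g$ and taking real/imaginary parts (all data here are real, so one may also argue directly) shows $g$ is a Neumann eigenfunction for the eigenvalue $z_0^2/c_1^2$, i.e. $g\in\mathrm{Ran}(\mathcal P^N(|z_0|))$; but then the extra term $\langle\mathcal P^N(|z_0|)g,g\rangle_{L^2(\Omega)}=\|g\|_{L^2(\Omega)}^2$ forces $g=0$. (This is precisely the role of the projection term: it shifts the would-be kernel away from zero.) Thus $a_{z_0}$ is injective, hence an isomorphism, giving existence and uniqueness of $g^h$.

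For part \eqref{a2}, I would write $a^{\mathrm{dom}}_{z,\tau}(\phi_1,\phi_2) = a_{z_0}(\phi_1,\phi_2) + a^{\mathrm{res}}_{z,\tau}(\phi_1,\phi_2)$ as in \eqref{eq:20}, let $A_0:H^1(\Omega)\to H^1(\Omega)$ be the (invertible) operator associated with $a_{z_0}$ via the Riesz representation, and let $A^{\mathrm{res}}_{z,\tau}$ be the operator associated with $a^{\mathrm{res}}_{z,\tau}$. The term $(z^2-z_0^2)c_1^{-2}\langle\phi_1,\phi_2\rangle_{L^2(\Omega)}$ has operator norm $O(|z-z_0|)$, and the boundary term $\tau\frac{\rho_1}{\rho_0}\langle (D_N(z/c_0)\gamma)\phi_1,\phi_2\rangle_{L^2(\Gamma)}$ has norm $O(\tau)$ uniformly for $z$ in the analyticity strip (using boundedness of $\gamma:H^1(\Omega)\to H^{1/2}(\Gamma)$, of $D_N(z/c_0):H^{1/2}(\Gamma)\to H^{-1/2}(\Gamma)$ analytic in $z$, and the dual pairing); so $\|A^{\mathrm{res}}_{z,\tau}\|\le C(\tau + |z-z_0|)$. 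Choosing $\delta^{(1)}_{z_0},\delta^{(2)}_{z_0}$ small enough that $\|A_0^{-1}\|\cdot\|A^{\mathrm{res}}_{z,\tau}\| < 1$, the operator $A^{\mathrm{dom}}_{z,\tau} = A_0(\mathbb I + A_0^{-1}A^{\mathrm{res}}_{z,\tau})$ is invertible with inverse given by the convergent Neumann series $(\mathbb I + A_0^{-1}A^{\mathrm{res}}_{z,\tau})^{-1}A_0^{-1} = \sum_{n\ge 0}(-A_0^{-1}A^{\mathrm{res}}_{z,\tau})^n A_0^{-1}$, which defines $g^h_{\mathrm{dom},\tau,z}$. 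For the analyticity and the stated coefficient bounds, I would note that $A^{\mathrm{res}}_{z,\tau}$ is, by \eqref{eq:20}, a jointly analytic function of $\tau$ and $z-z_0$ on a polydisc (the $\tau$-dependence is affine, the $(z-z_0)$-dependence is analytic since $z\mapsto z^2$ and $z\mapsto D_N(z/c_0)$ are analytic), vanishing at $(\tau, z-z_0)=(0,0)$; substituting its double power series into the Neumann series and collecting terms yields $g^h_{\mathrm{dom},\tau,z}=\sum_{p\ge 1}\sum_{q\ge 1}\tau^p(z-z_0)^q g^h_{p,q}$ with $\|g^h_{p,q}\|_{H^1(\Omega)}\le C_{p,q}\|h\|_{H^1(\Omega)}$, and the majorant series $\sum \tau^p(z-z_0)^q C_{p,q}$ converges on the polydisc because it is dominated by a geometric series in $\|A_0^{-1}\|\,C(\tau+|z-z_0|)$.

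The main obstacle I anticipate is the bookkeeping in part \eqref{a2}: verifying that the Neumann series really does rearrange into a double power series in $(\tau, z-z_0)$ with the stated separate estimates on the coefficients $g^h_{p,q}$ and a convergent majorant — in particular getting the indices to start at $p,q\ge 1$ requires checking that $A^{\mathrm{res}}_{z,\tau}$ has no constant term \emph{and} no pure-$\tau$ or pure-$(z-z_0)$ term in the relevant expansion, which follows from \eqref{eq:20} only after one inspects that the $(z-z_0)$-linear part of $a^{\mathrm{res}}$ carries a factor $(z^2-z_0^2)$ (hence $\sim (z-z_0)$) and the boundary term carries a factor $\tau$, so every monomial in the product expansion indeed has both a positive power of $\tau$ and a positive power of $(z-z_0)$ once one multiplies pairs from the two parts — \textbf{unless} a term is pure in one variable, which must be ruled out or the claimed index range adjusted. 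Establishing injectivity of $a_{z_0}$ in part \eqref{a1}, by contrast, is routine once the role of $\mathcal P^N(|z_0|)$ is identified.
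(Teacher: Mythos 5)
Your approach to part \eqref{a1} is structurally close to the paper's --- both split $a_{z_0}$ into a coercive (Lax--Milgram) piece and a compact perturbation coming from the compact embedding of $H^1(\Omega)$ into $L^2(\Omega)$, so that the Fredholm alternative reduces everything to injectivity --- but you argue injectivity directly, whereas the paper sets up the more abstract framework with $\mathcal B_1,\mathcal B_2$ and adds the projection onto $\mathrm{Ker}\big((1+z_0^2/c_1^2)^{-1}\mathbb I + \mathcal B_1^{-1}\mathcal B_2\big)$ to obtain invertibility. The direct route is fine, but your specific injectivity step has a gap: testing $a_{z_0}(g,\cdot)=0$ against $\phi=g$ alone yields only the single identity $-\|\nabla g\|^2 + z_0^2 c_1^{-2}\|g\|^2 + \|\mathcal P^N(|z_0|)g\|^2 = 0$, which does \emph{not} by itself show that $g$ is a Neumann eigenfunction. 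The clean way is to test against each $e^{(l)}_{|z_0|}$: Green's identity (using $\partial_\nu e^{(l)}_{|z_0|}=0$ and $\Delta e^{(l)}_{|z_0|}=-z_0^2c_1^{-2}e^{(l)}_{|z_0|}$) gives $a_{z_0}(g, e^{(l)}_{|z_0|}) = \langle \mathcal P^N(|z_0|)g, e^{(l)}_{|z_0|}\rangle_{L^2(\Omega)}$, so $\mathcal P^N(|z_0|)g=0$; with that, the variational equation reduces to the Neumann eigenvalue problem at $z_0^2/c_1^2$, and $g$ is an eigenfunction orthogonal to the whole eigenspace, forcing $g=0$. Note this is the \emph{opposite} of what you assert: one shows $g\perp\mathrm{Ran}(\mathcal P^N(|z_0|))$, not $g\in\mathrm{Ran}(\mathcal P^N(|z_0|))$; the two are simultaneously true only when $g=0$.

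Part \eqref{a2} is the same strategy as the paper's (``Born series inversion'' there is precisely the Neumann series you wrote). The caveat you raise about the index range is correct and in fact exposes a typo in the statement: $a^{\mathrm{res}}_{z,\tau}$ contains a term pure in $(z-z_0)$, namely $(z^2-z_0^2)c_1^{-2}\langle\cdot,\cdot\rangle_{L^2(\Omega)}$, and a term pure in $\tau$, namely the boundary term evaluated at $z=z_0$; moreover the zeroth-order term $A_0^{-1}h=g^h\ne 0$ of the Neumann series sits at $(p,q)=(0,0)$. So the double series must run over $p,q\ge 0$, not $p,q\ge 1$, with $g^h_{0,0}=g^h$. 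This is exactly how the paper uses the lemma later: compare \eqref{eq:30}, where the expansion of $\eta_l(\tau,z)$ starts at $p=q=0$, and \eqref{eq:88}, where the $(0,0)$, $(1,0)$, $(0,1)$ coefficients are separated explicitly.
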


The proof of Lemma \ref{le:1} is provided in Appendix \ref{sec:a21}. Using this Lemma, we obtain the following result regarding solvability of the variational problem defined by the sesquilinear form \eqref{eq:66}.

\begin{lemma}\label{le:2}
Assume that $z_0 \in \R \backslash \{0\}$ is a point such that $z^2_0/c^2_1$ is a Neumann eigenvalue of the domain $\Omega$. 
Suppose that $\tau \in (0,\delta^{(1)}_{z_0})$ for some $\delta^{(1)}_{z_0} \in \R_+$ and that $z\in B_{\delta^{(2)}_{z_0}}(z_0)$ for some $\delta^{(2)}_{z_0} \in \R_+$, such that statement \eqref{a2} of Lemma \ref{le:1} holds. Let the sesquilinear forms $a^{\rm{dom}}_{z,\tau}\langle \cdot, \cdot\rangle$ and $a_{z,\tau}\langle \cdot, \cdot\rangle$ be defined as \eqref{eq:20} and \eqref{eq:66}, respectively.
For any $h\in H^1(\Omega)$, there exists a unique $g^h_{\tau,z}\in H^1(\Omega)$ satisfying 
\begin{align} \label{eq:23}
a_{z,\tau}(g^h_{\tau,z}, \phi) = \langle h, \phi \rangle_{H^1(\Omega)}, \quad \forall \phi \in H^1(\Omega),
\end{align}
if and only if the matrix $\mathbb I - \mathcal C(\tau,z)$ is invertible, where $\mathcal C(\tau,z)$ is a $n_{|z_0|} \times n_{|z_0|}$ matrix whose $(j,l)-$ element is given by
\begin{align} \label{eq:25}
\mathcal C_{j,l}(\tau,z) := \langle \eta_j(\tau, z), e^{(l)}_{|z_0|}\rangle_{L^2(\Omega)}, \quad j,l\in \left\{1,\ldots,n_{|z_0|} \right\}.
\end{align}
Here, $\eta_l(\tau, z)$ is the unique solution of 
\begin{align} \label{eq:26}
a^{{\rm{dom}}}_{z,\tau}(\eta_{l}(\tau, z),\phi) = \langle \phi, e^{(l)}_{|z_0|} \rangle_{L^2(\Omega)}, \quad \forall \phi \in H^1(\Omega).
\end{align}
Furthermore, when $\mathbb I-\mathcal C(\tau,z)$ is invertible, the solution of \eqref{eq:23} has the representation of 
\begin{align}
g^h_{\tau,z} & = g_{{\rm{dom}},\tau,z}^h + \left(\eta_1(\tau,z), \ldots, \eta_{n_{|z_0|}}(\tau,z)\right) \notag \\
&\left(\mathbb I-\mathcal C(\tau,z)\right)^{-1} \left( \left\langle g_{{\rm{dom}},\tau,z}^h , e^{(1)}_{|z_0|}\right\rangle_{L^2(\Omega)}, \ldots, \left\langle g_{{\rm{dom}},\tau,z}^h, e^{(n_{|z_0|})}_{|z_0|}\right\rangle_{L^2(\Omega)}\right)^{T}. \label{eq:24}
\end{align}
Here, $g_{\mathrm{dom},\tau,z}^h$ is the solution of \eqref{eq:22}. Moreover, when $\mathbb I-\mathcal C(\tau,z)$ is not invertible, for any $(b_1,\ldots b_{n_{|z_0|}}) \in \mathrm{Ker}(\mathbb I - \mathcal C(\tau,z))$,
\begin{align}
a_{z,\tau}\left(\sum^{n_{|z_0|}}_{l=1} b_l \eta_l(\tau , z), \phi\right) = 0, \quad \forall \phi \in H^1(\Omega). \label{eq:143}
\end{align}
\end{lemma}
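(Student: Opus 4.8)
The plan is to reduce the variational equation \eqref{eq:23} for $a_{z,\tau}$ to the already well-posed problem \eqref{eq:22} for $a^{\rm{dom}}_{z,\tau}$ by treating the difference $a_{z,\tau} - a^{\rm{dom}}_{z,\tau}$ as a finite-rank perturbation. Comparing \eqref{eq:20} and \eqref{eq:66}, this difference is $-\langle \mathcal P^N(|z_0|)\phi_1,\phi_2\rangle_{L^2(\Omega)}$, i.e. minus the rank-$n_{|z_0|}$ projection onto $\mathrm{span}\{e^{(1)}_{|z_0|},\dots,e^{(n_{|z_0|})}_{|z_0|}\}$. So the equation $a_{z,\tau}(g,\phi)=\langle h,\phi\rangle_{H^1(\Omega)}$ for all $\phi$ is equivalent to
\begin{align*}
a^{\rm{dom}}_{z,\tau}(g,\phi) = \langle h,\phi\rangle_{H^1(\Omega)} + \sum_{l=1}^{n_{|z_0|}} \langle g, e^{(l)}_{|z_0|}\rangle_{L^2(\Omega)} \langle e^{(l)}_{|z_0|},\phi\rangle_{L^2(\Omega)}, \quad \forall \phi \in H^1(\Omega),
\end{align*}
since $\langle \mathcal P^N(|z_0|)\phi_1,\phi_2\rangle_{L^2(\Omega)} = \sum_l \langle\phi_1,e^{(l)}_{|z_0|}\rangle_{L^2(\Omega)}\langle e^{(l)}_{|z_0|},\phi_2\rangle_{L^2(\Omega)}$ and the eigenfunctions are real-valued (or one conjugates appropriately).

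Next I would invoke statement \eqref{a2} of Lemma \ref{le:1} to invert $a^{\rm{dom}}_{z,\tau}$: writing $c_l := \langle g, e^{(l)}_{|z_0|}\rangle_{L^2(\Omega)}$, the right-hand side functional is $\langle h,\cdot\rangle_{H^1(\Omega)} + \sum_l c_l \langle e^{(l)}_{|z_0|},\cdot\rangle_{L^2(\Omega)}$; by the Riesz representation theorem each $\langle e^{(l)}_{|z_0|},\cdot\rangle_{L^2(\Omega)}$ corresponds to some fixed element of $H^1(\Omega)$, and the unique solution supplied by \eqref{eq:22} is
\begin{align*}
g = g^h_{{\rm{dom}},\tau,z} + \sum_{l=1}^{n_{|z_0|}} c_l\, \eta_l(\tau,z),
\end{align*}
where $\eta_l(\tau,z)$ is exactly the solution of \eqref{eq:26} and $g^h_{{\rm{dom}},\tau,z}$ solves \eqref{eq:22}. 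This is a consistency relation, not yet a formula for $g$, because the $c_l$ are themselves $L^2$-inner products of $g$. To close the loop I would take the $L^2(\Omega)$ inner product of the displayed identity for $g$ against each $e^{(j)}_{|z_0|}$, obtaining the linear system $c_j = \langle g^h_{{\rm{dom}},\tau,z}, e^{(j)}_{|z_0|}\rangle_{L^2(\Omega)} + \sum_l c_l \langle \eta_l(\tau,z), e^{(j)}_{|z_0|}\rangle_{L^2(\Omega)}$, i.e.\ $(\mathbb I - \mathcal C(\tau,z))\,\mathbf{c} = \big(\langle g^h_{{\rm{dom}},\tau,z}, e^{(j)}_{|z_0|}\rangle_{L^2(\Omega)}\big)_j$ with $\mathcal C(\tau,z)$ as in \eqref{eq:25} (using $\mathcal C_{j,l} = \langle \eta_l(\tau,z), e^{(j)}_{|z_0|}\rangle_{L^2(\Omega)}$, matching the indexing up to the transpose convention in \eqref{eq:24}).

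From here the three claims follow quickly. Existence and uniqueness of $g^h_{\tau,z}$ holds iff this $n_{|z_0|}\times n_{|z_0|}$ system is uniquely solvable, i.e.\ iff $\mathbb I - \mathcal C(\tau,z)$ is invertible; for one direction one checks that distinct solutions $g$ would force distinct coefficient vectors $\mathbf c$ (since $g$ is determined by $\mathbf c$ and $h$), and conversely any solution $\mathbf c$ of the system produces a genuine solution $g$ by the displayed formula — one must verify that this $g$ indeed has $\langle g,e^{(j)}_{|z_0|}\rangle_{L^2(\Omega)} = c_j$, which is precisely what the system encodes, and that substituting back recovers \eqref{eq:23}. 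Plugging $\mathbf c = (\mathbb I-\mathcal C(\tau,z))^{-1}\big(\langle g^h_{{\rm{dom}},\tau,z}, e^{(l)}_{|z_0|}\rangle_{L^2(\Omega)}\big)_l$ into $g = g^h_{{\rm{dom}},\tau,z} + \sum_l c_l\eta_l(\tau,z)$ gives \eqref{eq:24}. For the non-invertible case, if $(b_1,\dots,b_{n_{|z_0|}}) \in \mathrm{Ker}(\mathbb I - \mathcal C(\tau,z))$ then $\sum_l b_l\eta_l(\tau,z)$ has its own $L^2$-projections onto the $e^{(j)}_{|z_0|}$ equal to $\sum_l b_l\mathcal C_{j,l} = b_j$, so by the same finite-rank identity it solves $a_{z,\tau}(\sum_l b_l\eta_l,\phi) = 0$, giving \eqref{eq:143}. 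The one point requiring genuine care — and the main obstacle — is bookkeeping the adjoint/conjugation conventions: the sesquilinear form $a_{z,\tau}$ is not symmetric (the $D_N(z/c_0)$ term and the complex factor $z^2c_1^{-2}$ break symmetry), so I must be careful whether the finite-rank perturbation and the Riesz identifications use $e^{(l)}_{|z_0|}$ in the first or second slot, and match this to the transpose appearing in \eqref{eq:24} and the index order in \eqref{eq:25}; once the convention is fixed consistently, every step is routine linear algebra built on Lemma \ref{le:1}.
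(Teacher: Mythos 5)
Your proposal matches the paper's proof: decompose $a_{z,\tau} = a^{\mathrm{dom}}_{z,\tau} - \langle\mathcal P^N(|z_0|)\,\cdot\,,\,\cdot\,\rangle_{L^2(\Omega)}$, invert the dominant form via statement \eqref{a2} of Lemma \ref{le:1}, obtain $g^h_{\tau,z} = g^h_{\mathrm{dom},\tau,z} + \sum_l \langle g^h_{\tau,z}, e^{(l)}_{|z_0|}\rangle_{L^2(\Omega)}\,\eta_l(\tau,z)$, and project onto the eigenspace to reduce to the finite linear system, exactly as in the derivation of \eqref{eq:126}. The transpose and conjugation bookkeeping you flag at the end is present in the paper itself — given the indexing of \eqref{eq:25}, the natural inverse in \eqref{eq:24} is $(\mathbb I-\mathcal C^T)^{-1}$ — but since $\mathbb I-\mathcal C$ is invertible precisely when $\mathbb I-\mathcal C^T$ is, this has no bearing on the conclusion.
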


\begin{proof}
By a straightforward calculation, we deduce from \eqref{eq:19} and \eqref{eq:20} that 
\begin{align*}
a_{z,\tau}(\phi_1, \phi_2) = a^{\textrm{dom}}_{z,\tau}(\phi_1, \phi_2) - \sum^{n_{|z_0|}}_{l=1} \left\langle \phi_1, e^{(l)}_{|z_0|} \right\rangle_{L^2(\Omega)}\left\langle \phi_2, e^{(l)}_{|z_0|} \right\rangle_{ L^2(\Omega)}, \quad \phi_1,\phi_2 \in H^1(\Omega).
\end{align*}
We note that $g_{\textrm{dom},\tau,z}^h$ satisfies \eqref{eq:22}. Therefore, with the aid of \eqref{eq:23}, we have 
\begin{align} \label{eq:21}
a^{\textrm{dom}}_{z,\tau}(g_{\tau,z}^h,\phi) - a^{\textrm{dom}}_{z,\tau}(g^h_{\textrm{dom},\tau,z},\phi) - \sum^{n_{|z_0|}}_{l=1} a^{\textrm{dom}}_{z,\tau}(\eta_l(\tau, z),\phi) \left\langle g_{\tau,z}^h, e^{(l)}_{|z_0|}\right\rangle_{L^2(\Omega)} = 0.
\end{align}
By statement \eqref{a2} of Lemma \ref{le:1}, \eqref{eq:21} implies 
\begin{align} \label{eq:126}
g_{\tau,z}^h = g_{\textrm{dom},\tau,z}^h + \sum^{n_{|z_0|}}_{l=1}\eta_l(\tau,z)\left\langle g_{\tau,z}^h,e^{(l)}_{|z_0|}\right\rangle_{L^2(\Omega)} \quad \textrm{in}\; \Omega,
\end{align}
which is equivalent to 
\begin{align*}
&\left\langle g_{\tau,z}^h, e^{(j)}_{|z_0|}\right\rangle_{L^2(\Omega)} = \left\langle g_{\mathrm{dom},\tau, z}^h, e^{(j)}_{|z_0|}\right\rangle_{L^2(\Omega)} + \sum^{n_{|z_0|}}_{l=1} \left\langle \eta_l(\tau, z), e^{(j)}_{|z_0|}\right\rangle_{L^2(\Omega)} \left\langle g_{\tau,z}^h, e^{(l)}_{|z_0|}\right\rangle_{L^2(\Omega)}
\end{align*}
for each $j \in \left\{1,\ldots,n_{|z_0|} \right\}$. With the aid of \eqref{eq:126}, we obtain that \eqref{eq:23} is solvable if and only if $\mathbb I - \mathcal C(\tau,z) $ is invertible and that $g^h_{\tau,z}$ satisfies \eqref{eq:24} when $\mathbb I - \mathcal C(\tau,z)$ is invertible. 

Furthermore, when $\mathbb I - \mathcal C(\tau,z)$ is not invertible, utilizing  \eqref{eq:126} again, we obtain that for every $(b_1,\ldots, b_{n_{|z_0|}}) \in \mathrm{Ker}(\mathbb I - \mathcal C(\tau,z))$, \eqref{eq:143} holds. 
\end{proof}

In the sequel, we derive the asymptotic expansion of $\mathcal C(\tau,z)$, where $\mathcal C(\tau,z)$ is specified in \eqref{eq:25}. Before proceeding with this objective, we define a $n_{|z_0|} \times n_{|z_0|}$ matrix $\mathcal M(z_0)$ whose $(j,l)-$th element is given by
\begin{align} \label{eq:17}
\mathcal M_{jl}(z_0) := 
-\left\langle D_N({z_0/c_0}) \gamma e^{(j)}_{|z_0|}, e^{(l)}_{|z_0|}\right\rangle_{L^2(\Gamma)}, 
\quad j,l \in \left\{1,\ldots,n_{|z_0|}\right\}.
\end{align} 

\begin{lemma} \label{le:7} 
Under the same assumptions as in Lemma \ref{le:2}, the matrix $\mathcal C(\tau,z)$ has the asymptotic expansion near $z_0$, 
\begin{align}\label{eq:16}
& \mathcal C(\tau,z) = 
\mathbb I - 2(z-z_0)\frac{z_0}{c_1^2} \mathbb I + \tau \frac{\rho_1}{\rho_0}  \mathcal M(z_0) + O(|z-z_0|^2 + \tau^2)\quad \mathrm{as}\; \tau \rightarrow 0.
\end{align}
Here, $\mathcal M(z_0)$ is given by \eqref{eq:17}.
\end{lemma}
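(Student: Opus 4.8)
The plan is to compute $\mathcal{C}_{j,l}(\tau,z) = \langle \eta_j(\tau,z), e^{(l)}_{|z_0|}\rangle_{L^2(\Omega)}$ by extracting the leading terms of the Taylor/perturbation expansion of $\eta_l(\tau,z)$ guaranteed by statement \eqref{a2} of Lemma \ref{le:1}. First I would exploit the defining identity \eqref{eq:26}, namely $a^{\mathrm{dom}}_{z,\tau}(\eta_l(\tau,z),\phi) = \langle \phi, e^{(l)}_{|z_0|}\rangle_{L^2(\Omega)}$ for all $\phi \in H^1(\Omega)$, and test it against $\phi = e^{(j)}_{|z_0|}$. Recalling from \eqref{eq:20} that $a^{\mathrm{dom}}_{z,\tau} = a_{z_0} + a^{\mathrm{res}}_{z,\tau}$ with $a^{\mathrm{res}}_{z,\tau}(\phi_1,\phi_2) = (z^2-z_0^2)c_1^{-2}\langle\phi_1,\phi_2\rangle_{L^2(\Omega)} + \tau\frac{\rho_1}{\rho_0}\langle (D_N(z/c_0)\gamma)\phi_1,\phi_2\rangle_{L^2(\Gamma)}$, the key observation is that at $\tau = 0$, $z = z_0$ the form reduces to $a_{z_0}$, and since $e^{(l)}_{|z_0|}$ is a Neumann eigenfunction with eigenvalue $z_0^2/c_1^2$, one checks directly that $a_{z_0}(e^{(l)}_{|z_0|}, \phi) = \langle \mathcal{P}^N(|z_0|)e^{(l)}_{|z_0|}, \phi\rangle_{L^2(\Omega)} = \langle e^{(l)}_{|z_0|}, \phi\rangle_{L^2(\Omega)}$ for all $\phi$, because the gradient term and the $z_0^2 c_1^{-2}$ mass term cancel upon integration by parts against the Neumann boundary condition. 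Hence $\eta_l(0,z_0) = e^{(l)}_{|z_0|}$, which immediately gives the leading term $\mathcal{C}_{j,l}(0,z_0) = \delta_{jl}$, i.e. $\mathcal{C}(\tau,z) = \mathbb{I} + (\text{lower order})$.

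Next I would compute the first-order corrections. Write $\eta_l(\tau,z) = e^{(l)}_{|z_0|} + (z-z_0)\,\eta_l^{(1)} + \tau\,\eta_l^{(2)} + O(|z-z_0|^2 + \tau^2)$ using the analyticity from Lemma \ref{le:1}\eqref{a2}. Substituting into \eqref{eq:26} and matching orders: at order $(z-z_0)$, $a_{z_0}(\eta_l^{(1)},\phi) + \partial_z[a^{\mathrm{res}}_{z,\tau}]|_{(z_0,0)}(e^{(l)}_{|z_0|},\phi) = 0$, where $\partial_z a^{\mathrm{res}}|_{(z_0,0)} = 2z_0 c_1^{-2}\langle\cdot,\cdot\rangle_{L^2(\Omega)}$ (the $D_N$ term carries a factor $\tau$ and drops out). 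At order $\tau$, $a_{z_0}(\eta_l^{(2)},\phi) + \frac{\rho_1}{\rho_0}\langle(D_N(z_0/c_0)\gamma)e^{(l)}_{|z_0|},\phi\rangle_{L^2(\Gamma)} = 0$. Then I would take the $L^2(\Omega)$ inner product with $e^{(j)}_{|z_0|}$ in each and use the same cancellation identity $a_{z_0}(v, e^{(j)}_{|z_0|}) = \langle \mathcal{P}^N(|z_0|)v, e^{(j)}_{|z_0|}\rangle_{L^2(\Omega)} = \langle v, e^{(j)}_{|z_0|}\rangle_{L^2(\Omega)}$ (valid for any $v \in H^1(\Omega)$ since $\mathcal{P}^N(|z_0|)$ is the orthogonal projection onto $\mathrm{span}\{e^{(m)}_{|z_0|}\}$ and $e^{(j)}_{|z_0|}$ lies in that span). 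This yields $\langle \eta_l^{(1)}, e^{(j)}_{|z_0|}\rangle_{L^2(\Omega)} = -2z_0 c_1^{-2}\langle e^{(l)}_{|z_0|}, e^{(j)}_{|z_0|}\rangle_{L^2(\Omega)} = -2z_0 c_1^{-2}\delta_{jl}$ and $\langle \eta_l^{(2)}, e^{(j)}_{|z_0|}\rangle_{L^2(\Omega)} = -\frac{\rho_1}{\rho_0}\langle(D_N(z_0/c_0)\gamma)e^{(l)}_{|z_0|}, e^{(j)}_{|z_0|}\rangle_{L^2(\Gamma)} = \frac{\rho_1}{\rho_0}\mathcal{M}_{jl}(z_0)$ by the definition \eqref{eq:17} of $\mathcal{M}(z_0)$ (noting the sign convention there). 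Assembling gives exactly \eqref{eq:16}.

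The remainder estimate $O(|z-z_0|^2 + \tau^2)$ follows from the quantitative analyticity in Lemma \ref{le:1}\eqref{a2}: the double power series $\sum_{p,q}\tau^p(z-z_0)^q g^h_{p,q}$ converges with the stated coefficient bounds, so grouping all terms with $p + q \ge 2$ (and also the mixed $p=q=1$ term, which is $O(\tau|z-z_0|) = O(|z-z_0|^2+\tau^2)$ by Young's inequality) gives a tail controlled uniformly in $H^1(\Omega)$, hence in $L^2(\Omega)$ after pairing with the fixed functions $e^{(j)}_{|z_0|}$; boundedness of $\gamma: H^1(\Omega)\to H^{1/2}(\Gamma)$ and of $D_N(z/c_0)$ on the analyticity strip handles the boundary terms.

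The main obstacle I anticipate is bookkeeping the sign and normalization conventions consistently—particularly the interplay between the $-\langle\nabla\phi_1,\nabla\phi_2\rangle$ sign in $a_{z_0}$, the cancellation with the $z_0^2 c_1^{-2}$ mass term via the Neumann condition, and the sign in front of $D_N$ inside $\mathcal{M}(z_0)$ versus inside $a^{\mathrm{res}}_{z,\tau}$—so that the final matrix \eqref{eq:16} comes out with precisely the coefficients $-2(z-z_0)\frac{z_0}{c_1^2}\mathbb{I}$ and $+\tau\frac{\rho_1}{\rho_0}\mathcal{M}(z_0)$. A secondary technical point is justifying that the key identity $a_{z_0}(v,e^{(j)}_{|z_0|}) = \langle v, e^{(j)}_{|z_0|}\rangle_{L^2(\Omega)}$ holds for all $v\in H^1(\Omega)$ (not just smooth $v$), which follows from density and the $C^{1,1}$ regularity of $\Gamma$ ensuring $e^{(j)}_{|z_0|}\in H^2(\Omega)$ so that integration by parts is valid.
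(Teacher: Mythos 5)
Your proposal is correct and follows essentially the same route as the paper: both expand $\eta_l(\tau,z)$ via the analyticity guaranteed by Lemma \ref{le:1}\eqref{a2}, match the orders $(0,0)$, $(0,1)$, $(1,0)$, and then test against $e^{(j)}_{|z_0|}$ using the cancellation $a_{z_0}(v,e^{(j)}_{|z_0|})=\langle v,e^{(j)}_{|z_0|}\rangle_{L^2(\Omega)}$. The only cosmetic difference is that the paper first rewrites \eqref{eq:26} as the strong PDE system \eqref{eq:28}--\eqref{eq:29} and identifies $\eta_{l,0,0}$, $\eta_{l,0,1}$ explicitly before projecting, whereas you stay in the weak form throughout and read off only the projections $\langle\eta_l^{(1)},e^{(j)}\rangle$, $\langle\eta_l^{(2)},e^{(j)}\rangle$ — a minor shortcut that yields the same expansion.
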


\begin{proof}
Due to \eqref{eq:25}, we focus on analyzing the asymptotic behavior of $\eta_l(\tau, z)$, where $\eta_l(\tau,z)$ solves \eqref{eq:26} in the neighborhood of $z_0$. It follows from statement \eqref{a2} of Lemma \ref{le:1} that $\eta_l(\tau, z)$ satisfies
\begin{align}
&\left[\Delta + z_0^2 c^{-2}_1  + (z-z_0)(2z_0 + (z-z_0))c^{-2}_1 + \mathcal P^N(|z_0|) \right] \eta_l(\tau, z) = e^{(l)}_{|z_0|} \quad  \textrm{in}\; \Omega, \label{eq:28}\\
&\partial_{\nu} \eta_l(\tau, z) = \tau \frac{\rho_1}{\rho_0} D_N(z/c_0)\gamma\eta_l(\tau, z) \quad \textrm{on}\; \Gamma. \label{eq:29}
\end{align}
Here, the projection operator $\mathcal P^N(|z_0|)$ is specified by \eqref{eq:27} and $\eta_l(\tau, z)$ has the expansion 
\begin{align}
&\eta_l(\tau, z) = \sum^{\infty}_{p=0}\sum^{\infty}_{q=0} \tau^p(z-z_0)^q \eta_{l,p,q} \quad \textrm{in some neighborhood of}\; (0,z_0) \label{eq:30}.
\end{align}
Furthermore, due to the analyticity of $D_N(z)$, we have 
\begin{align}
D_N(z/c_0) = \sum^{+\infty}_{p=1} \frac{{(z-z_0)}^{p}}{c^{p}_0 p!} \partial^p_z D_N(z)|_{z_0/c_0} \quad \textrm{in some neighborhood of}\; z_0. \label{eq:31}
\end{align}
Inserting \eqref{eq:30} and \eqref{eq:31} into equations \eqref{eq:28}--\eqref{eq:29}, and equating the coefficients for $\tau^p(z-z_0)^q$ with $(p,q) = (0,0), (0,1), (1,0)$, we obtain that
\begin{align}
&\Delta \eta_{l,0,0} + z_0^2 c^{-2}_1 \eta_{l,0,0} + \mathcal P^N({|z_0|}) \eta_{l,0,0} = e^{(l)}_{|z_0|} \quad \rm{in}\; \Omega, \label{eq:32} \\
&\partial_\nu \eta_{l,0,0} = 0 \quad \rm{on}\; \Gamma, \label{eq:33}
\end{align}
\begin{align}
&\Delta \eta_{l,0,1} + z_0^2 c^{-2}_1 \eta_{l,0,1} + 2{z_0}{c^{-2}_1} \eta_{l,0,0} + \mathcal P^N({|z_0|}) \eta_{l,0,1} = 0 \quad \rm{in}\; \Omega, \label{eq:34} \\
&\partial_\nu \eta_{l,0,1} = 0 \quad \rm{on}\; \Gamma, \label{eq:35}
\end{align}
and
\begin{align}
&\Delta \eta_{l,1,0} + z_0^2 c^{-2}_1 \eta_{l,1,0} + \mathcal P^N({|z_0|}) \eta_{l,1,0} = 0 \quad \rm{in}\; \Omega, \label{eq:36}\\
&\partial_\nu \eta_{l,1,0} = \frac{\rho_1}{\rho_0} D_N(z_0/c_0)\gamma\eta_{l,0,0}\quad \rm{on}\; \Gamma. \label{eq:37}
\end{align}
In conjunction with \eqref{eq:32}, \eqref{eq:33}, \eqref{eq:34} and \eqref{eq:35}, we find
\begin{align} \label{eq:7}
\eta_{l,0,0} = e^{(l)}_{|z_0|}, \quad \eta_{l,0,1} = -\frac{2z_0}{c^2_1} e^{(l)}_{|z_0|}.
\end{align}
Moreover, utilizing \eqref{eq:36} and \eqref{eq:37}, we have
\begin{align*}
&-\left\langle \nabla \eta_{l,1,0},\nabla \phi \right \rangle_{\mathbb L^2(\Omega)} + z_0^2c_1^{-2} \left\langle \eta_{l,1,0}, \phi \right \rangle_{L^2(\Omega)}  \\
&+ \left\langle \frac{\rho_1}{\rho_0} D_N(z_0/c_0) \gamma \eta_{l,0,0}, \phi \right \rangle_{L^2(\Gamma)} + \left\langle \mathcal P^N(|z_0|)\eta_{l,1,0}, \phi \right\rangle_{L^2(\Omega)} = 0, \quad \rm{for}\; \phi\in H^1(\Omega).
\end{align*}
Setting $\phi = e^{(j)}_{|z_0|}$ with $j \in \left\{1,\ldots,n_{|z_0|} \right\} $ in the above equation, we arrive at
\begin{align*}
 \left\langle \eta_{l,1,0}, e^{(j)}_{|z_0|}\right\rangle_{L^2(\Omega)} = \left\langle \mathcal P^N(|z_0|) \eta_{l,1,0}, e^{(j)}_{|z_0|} \right\rangle_{L^2(\Omega)} = - \left\langle \frac{\rho_1}{\rho_0} D_N(z_0/c_0) \gamma \eta_{l,0,0}, e^{(j)}_{|z_0|} \right \rangle_{L^2(\Gamma)}. 
\end{align*}
This, together with \eqref{eq:25}, \eqref{eq:30} and \eqref{eq:7} gives the asymptotic expansion \eqref{eq:16}.
\end{proof}

We also need the following lemma.

\begin{lemma} \label{le:8}
Let $z_0 \in \R \backslash \{0\}$ be such that $z^2_0/c^2_1$ is a Neumann eigenvalue of the domain $\Omega$. Assume that $\lambda_{\mathcal M_{z_0}} \in \mathbb C$ is any eigenvalue of the matrix $\mathcal M(z_0)$, where the matrix $\mathcal M(z_0)$ is given by \eqref{eq:17}.
Then, we have 
\begin{align} \label{eq:86}
{\rm{Im}}(\lambda_{\mathcal M({z_0})}) < 0.
\end{align}
\end{lemma}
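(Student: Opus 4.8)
The plan is to relate the eigenvalues of the matrix $\mathcal M(z_0)$ to the exterior Dirichlet-to-Neumann map $D_N(z_0/c_0)$ and to exploit the radiation condition, which forces a strict sign on the relevant imaginary part. Concretely, suppose $\lambda_{\mathcal M(z_0)}$ is an eigenvalue of $\mathcal M(z_0)$ with eigenvector $(c_1,\dots,c_{n_{|z_0|}}) \in \mathbb C^{n_{|z_0|}} \setminus \{0\}$, and set $w := \sum_{l=1}^{n_{|z_0|}} c_l\, e^{(l)}_{|z_0|}$, a nonzero Neumann eigenfunction on $\Omega$ associated with $z_0^2/c_1^2$. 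By the definition \eqref{eq:17} of $\mathcal M(z_0)$ and the orthonormality of the $e^{(l)}_{|z_0|}$, one gets $\lambda_{\mathcal M(z_0)} \|w\|_{L^2(\Omega)}^2 = \langle \mathcal M(z_0)(c_1,\dots,c_{n_{|z_0|}}), (c_1,\dots,c_{n_{|z_0|}})\rangle = -\langle D_N(z_0/c_0)\gamma w, \gamma w\rangle_{L^2(\Gamma)}$. So it suffices to show that $\operatorname{Im}\langle D_N(z_0/c_0)\gamma w, \gamma w\rangle_{L^2(\Gamma)} > 0$ for this particular Dirichlet datum $\gamma w$.

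First I would let $u$ be the $z_0/c_0$-outgoing solution of $\Delta u + (z_0/c_0)^2 u = 0$ in $\mathbb R^3 \setminus \overline\Omega$ with $\gamma u = \gamma w$ on $\Gamma$, so that by definition $D_N(z_0/c_0)\gamma w = \partial_\nu u$ with $\nu$ the outward normal to $\Omega$ (equivalently, the inward normal to the exterior domain). Applying Green's identity to $u$ and $\bar u$ on the exterior region $B_R \setminus \overline\Omega$ and letting $R \to \infty$, the Helmholtz equation kills the volume term coming from $\Delta$, and the boundary term at $|x| = R$ converges, via the Sommerfeld radiation condition and the standard Rellich-type computation, to $2 i (z_0/c_0) \int_{S^2} |u_\infty(\hat x)|^2\, d\hat x$, where $u_\infty$ is the far-field pattern of $u$. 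Keeping careful track of orientations, this yields $\langle \partial_\nu u, \gamma u\rangle_{L^2(\Gamma)} - \overline{\langle \partial_\nu u, \gamma u\rangle_{L^2(\Gamma)}} = 2 i (z_0/c_0) \|u_\infty\|_{L^2(S^2)}^2$, i.e. $\operatorname{Im}\langle D_N(z_0/c_0)\gamma w, \gamma w\rangle_{L^2(\Gamma)} = (z_0/c_0)\|u_\infty\|_{L^2(S^2)}^2$. Since $z_0 \neq 0$, this is $\ge 0$, and after possibly replacing $z_0$ by $|z_0|$ (the eigenvalue problem only depends on $z_0^2$, and the $e^{(l)}$ are real), we get $\ge 0$ with sign $+$.

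The remaining point, which I expect to be the main obstacle, is to promote this nonnegativity to strict positivity, i.e. to rule out $u_\infty \equiv 0$. If $u_\infty \equiv 0$, then by Rellich's lemma and unique continuation $u \equiv 0$ in the unbounded component of $\mathbb R^3 \setminus \overline\Omega$, hence $\gamma u = \gamma w = 0$ on $\Gamma$ (using that $\Gamma$ is the full boundary of the connected domain $\Omega$). But then $w$ would be a nonzero Neumann eigenfunction of $\Omega$ with vanishing Dirichlet trace, so it would simultaneously be a Dirichlet eigenfunction with the same eigenvalue $z_0^2/c_1^2$ and with $\partial_\nu w = 0$ on $\Gamma$; by Green's identity $\|\nabla w\|_{L^2(\Omega)}^2 = (z_0^2/c_1^2)\|w\|_{L^2(\Omega)}^2$ and also $\int_\Gamma (\gamma w)(\partial_\nu w) = 0$ is automatic, so one instead uses Holmgren's uniqueness theorem: $w$ solves the Helmholtz equation in $\Omega$ with zero Cauchy data ($w = 0$ and $\partial_\nu w = 0$) on the $C^{1,1}$ hypersurface $\Gamma$, hence $w \equiv 0$ in $\Omega$, contradicting $w \neq 0$. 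Therefore $u_\infty \not\equiv 0$, and \eqref{eq:86} follows. I would double-check the orientation conventions in the Green's identity step (the sign of the normal in the definition of $D_N$ relative to the exterior domain) and confirm that the far-field asymptotics can be invoked here even though $z_0/c_0$ is real and positive — which is exactly the physical scattering regime, so the classical theory applies directly.
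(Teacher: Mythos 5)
Your proof is correct and takes essentially the same route as the paper: reduce the claim to the sign of $\operatorname{Im}\langle D_N(z_0/c_0)\gamma w, \gamma w\rangle_{L^2(\Gamma)}$ for $w=\sum_l c_l e^{(l)}_{|z_0|}$, use the outgoing radiation condition (Rellich/far-field identity) to show this equals $(z_0/c_0)\|u_\infty\|^2_{L^2(\mathbb S^2)}\geq 0$, and rule out $u_\infty\equiv 0$ by Rellich's lemma plus Holmgren's theorem applied to the vanishing Cauchy data of the Neumann eigenfunction $w$. The paper phrases the same content as a proof by contradiction (assume $\operatorname{Im}\lambda_{\mathcal M(z_0)}\ge 0$, deduce $\operatorname{Im}\int_\Gamma \partial_\nu w^{sc}\,\overline{w^{sc}}\le 0$, apply Rellich's lemma to get $w^{sc}\equiv 0$, then Holmgren). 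One caution: your parenthetical that the eigenvalue problem ``only depends on $z_0^2$'' is not literally right, since on the real axis $D_N(-\kappa)=\overline{D_N(\kappa)}$ and the $e^{(l)}_{|z_0|}$ are real, so $\mathcal M(-z_0)=\overline{\mathcal M(z_0)}$ and the imaginary parts of its eigenvalues change sign; the inequality \eqref{eq:86} is really only valid for $z_0>0$, which is the case the paper actually uses (and the paper's own Rellich step implicitly assumes $z_0/c_0>0$ as well).
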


\begin{proof}
We denote by $V_{\lambda_{\mathcal M{(z_0)}}}$ the eigenspace corresponding to $\lambda_{\mathcal M{(z_0)}}$. For any nonzero vector $(b_1,\ldots, b_{n_{|z_0|}}) \in V_{\lambda_{\mathcal M{(z_0)}}}$, then 
\begin{align}
\mathcal M({z_0})(b_1,\ldots, b_{n_{|z_0|}}) = \lambda_{\mathcal M({z_0})} (b_1,\ldots, b_{n_{|z_0|}}). \notag
\end{align}
This directly implies 
\begin{align} \label{eq:18}
(\overline{b_1},\ldots, \overline{b_{n_{|z_0|}}}) \mathcal M(z_0) (b_1,\ldots, b_{n_{|z_0|}})^{T} = \lambda_{\mathcal M({z_0})}\sum^{n_{|z_0|}}_{l=1}|b_l|^2.
\end{align}
We set $w = \sum^{n_{|z_0|}}_{l=1} b_l e^{(l)}_{|z_0|}$ and consider the subsequent scattering problem
\begin{align*}
&\Delta w^{sc} + z_0^2/c_0^2 w^{sc} = 0 \quad \textrm{in}\; \Omega,\\
& w^{sc} = \gamma w \quad \textrm{on}\; \Gamma,\\
& w^{sc}\; \textrm{satisfies}\; z_0/c_0-\textrm{outgoing}.
\end{align*}
If ${\rm{Im}}(\lambda_{\mathcal M({z_0})}) \ge 0$, we deduce from \eqref{eq:18} that
\begin{align*}
\textrm{Im} \left(\int_{\Gamma} \partial_\nu w^{sc}(y) \overline{w^{sc}}(y) d\sigma(y)\right) \le 0.
\end{align*}
Thus, by Rellich lemma, we have $w^{sc}(y)=0$ in $\R^3\backslash \Omega$. This directly yields $w = 0$ on $\Gamma$. Therefore, by Holmgren's theorem, we obtain $w=0$ in $\Omega$, which implies 
\begin{align*}
b_1 =\cdots= b_{n_{|z_0|}} = 0.
\end{align*}
This leads to a contradiction. Hence, we obtain \eqref{eq:86}.
\end{proof}

As a consequence of Lemmas \ref{le:2}, \ref{le:7} and \ref{le:8}, we obtain the asymptotic expansion of the scattering resonance associated with the nonzero Neumann eigenvalue.

\begin{theorem} \label{th:2}
Let $\tau>0$. Assume that $z_0 \in \mathbb R \backslash \{0\}$ is a point at which $z^2_0/c^2_1$ is a Neumann eigenvalue of the domain $\Omega$. The scattering resonances of the Hamiltonian $H_{\rho_\tau,k_{\tau}}$ in the neighborhood of $z_0$ should satisfy one of the following asymptotic expansion 
\begin{align}
z^{(j)}_0(\tau) = z_0 + \tau \frac{c^2_1}{2z_0}\frac{\rho_1}{\rho_0}{\lambda^{(j)}_{\mathcal M(z_0)}}
+ o(\tau), \quad \textrm{as}\; \tau \rightarrow 0, \quad j\in\{1,\ldots,n_{|z_0|}\},\label{eq:38}
\end{align}
where $\lambda^{(j)}_{\mathcal M(z_0)}$ is the eigenvalue of the matrix $\mathcal M{(z_0)}$ which is given by \eqref{eq:17}.
\end{theorem}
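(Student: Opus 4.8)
The plan is to combine the characterization of Fabry-P\'erot resonances via the sesquilinear form $a_{z,\tau}$ with the reduction to the finite matrix $\mathbb I - \mathcal C(\tau,z)$ from Lemma \ref{le:2} and the asymptotic expansion of $\mathcal C(\tau,z)$ from Lemma \ref{le:7}. Recall that $\lambda_\tau$ near $z_0$ is a scattering resonance if and only if there is a nonzero $\phi_{\lambda_\tau}\in H^1(\Omega)$ with $a_{\lambda_\tau,\tau}(\phi_{\lambda_\tau},\phi)=0$ for all $\phi\in H^1(\Omega)$; by Lemma \ref{le:2} (the last statement), this is equivalent to $\mathbb I - \mathcal C(\tau,z)$ being non-invertible, i.e. $\det\big(\mathbb I - \mathcal C(\tau,z)\big)=0$. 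So the first step is: \emph{the resonances near $z_0$ are precisely the zeros of $F(\tau,z):=\det\big(\mathbb I-\mathcal C(\tau,z)\big)$ in $B_{\delta^{(2)}_{z_0}}(z_0)$, with $\tau\in(0,\delta^{(1)}_{z_0})$.}

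Next I would plug in the expansion \eqref{eq:16} from Lemma \ref{le:7}:
\begin{align*}
\mathbb I - \mathcal C(\tau,z) = 2(z-z_0)\frac{z_0}{c_1^2}\,\mathbb I - \tau\frac{\rho_1}{\rho_0}\,\mathcal M(z_0) + O\big(|z-z_0|^2+\tau^2\big).
\end{align*}
Introduce the rescaled variable $w := \dfrac{2z_0}{c_1^2}\cdot\dfrac{\rho_0}{\rho_1}\cdot\dfrac{z-z_0}{\tau}$, so that, after factoring out $\tau^{n_{|z_0|}}(\rho_1/\rho_0)^{n_{|z_0|}}$, the equation $\det(\mathbb I-\mathcal C(\tau,z))=0$ becomes $\det\big(w\,\mathbb I - \mathcal M(z_0) + O(\tau)\big)=0$, where the $O(\tau)$ term is analytic in $(\tau,z-z_0)$ and vanishes at $\tau=0$ uniformly for $w$ in compact sets (using that $z-z_0=O(\tau)$ along the relevant branch). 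At $\tau=0$ this is exactly the characteristic equation $\det(w\,\mathbb I-\mathcal M(z_0))=0$, whose roots are the eigenvalues $\lambda^{(j)}_{\mathcal M(z_0)}$, $j=1,\dots,n_{|z_0|}$. A continuity/implicit-function or Rouch\'e-type argument on the $n_{|z_0|}$ roots of the degree-$n_{|z_0|}$ polynomial (in $w$) perturbation then shows that each root $w^{(j)}(\tau)$ converges to $\lambda^{(j)}_{\mathcal M(z_0)}$ as $\tau\to0$, hence $w^{(j)}(\tau)=\lambda^{(j)}_{\mathcal M(z_0)}+o(1)$. Unraveling the rescaling gives
\begin{align*}
z^{(j)}_0(\tau) = z_0 + \tau\,\frac{c_1^2}{2z_0}\,\frac{\rho_1}{\rho_0}\,\lambda^{(j)}_{\mathcal M(z_0)} + o(\tau),
\end{align*}
which is \eqref{eq:38}. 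One should also invoke Lemma \ref{le:8} to confirm consistency (each such $z^{(j)}_0(\tau)$ indeed lies in the lower half-plane, matching Theorem \ref{th:1} and the Fabry-P\'erot picture), and Theorem \ref{th:1} to ensure we are not over- or under-counting: there are at most $n_{|z_0|}$ resonances near $z_0$, and the polynomial has exactly $n_{|z_0|}$ roots (with multiplicity), so the labeling is consistent.

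The main obstacle is the bookkeeping of the perturbation term: one must justify that the remainder $O(|z-z_0|^2+\tau^2)$ in \eqref{eq:16}, after the rescaling $z-z_0\sim\tau w$, genuinely becomes $O(\tau)$ \emph{uniformly} in $w$ on a fixed disk containing all the $\lambda^{(j)}_{\mathcal M(z_0)}$, and that no resonance escapes to the boundary $|z-z_0|=\delta^{(2)}_{z_0}$ as $\tau\to0$ — the latter is exactly what Lemma \ref{le:0}\eqref{f2} and Theorem \ref{th:1} provide, so the roots we track via the characteristic polynomial account for all resonances in the ball. A secondary subtlety is that when $\mathcal M(z_0)$ has repeated eigenvalues or is non-diagonalizable, the individual branches $w^{(j)}(\tau)$ may only be continuous (Puiseux-type) rather than analytic in $\tau$; but for the stated first-order asymptotics with an $o(\tau)$ remainder, continuity of the roots of the characteristic polynomial in its coefficients suffices, so no refined perturbation theory is needed.
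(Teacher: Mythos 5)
Your proposal is correct and takes essentially the same route as the paper's proof: characterize resonances near $z_0$ as the points where $\mathbb I - \mathcal C(\tau,z)$ fails to be invertible (Lemma \ref{le:2}), substitute the expansion \eqref{eq:16} from Lemma \ref{le:7}, and invoke Lemma \ref{le:8}. The only notable difference is that you make explicit the rescaling $w=\tfrac{2z_0\rho_0}{c_1^2\rho_1}\tfrac{z-z_0}{\tau}$, the uniformity of the remainder after rescaling, and the Rouch\'e/root-continuity step, details the paper's proof compresses into one sentence; you also use Theorem \ref{th:1} to bound the count, and you note the Puiseux subtlety for repeated eigenvalues — all sound and consistent with the paper. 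One small point of emphasis: the paper invokes Lemma \ref{le:8} primarily to guarantee $\mathcal M(z_0)$ is invertible (so the eigenvalues $\lambda^{(j)}_{\mathcal M(z_0)}$ are nonzero and the first-order term genuinely fixes the leading shift), whereas you cite it only as a consistency check on the sign of the imaginary part; your argument still closes without that, since even a zero eigenvalue would produce $w^{(j)}(\tau)\to 0$ and hence an $o(\tau)$ shift compatible with \eqref{eq:38}, but it is worth being explicit that invertibility of $\mathcal M(z_0)$ is what makes the expansion informative rather than vacuous.
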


\begin{proof}
We recall that $z$ is a scattering resonance of the Hamiltonian $H_{\rho_\tau, k_\tau}$ if and only if there exists a nonzero solution $\phi_z \in H^1(\Omega)$ such that $a_{z,\tau}(\phi_z, \phi) = 0$ for every $\phi \in H^1(\Omega)$.
Therefore, with the aid Lemma \ref{le:2}, we obtain that $z$ is a scattering resonance in the neighborhood of $z_0$ if and only if $\mathbb I- \mathcal C(\tau,z)$ is invertible. Moreover, it follows from Lemma \ref{le:8} that $\mathcal M(z_0)$ is invertible. This, together with the asymptotic expansion \eqref{eq:16} yields that every point where $\mathbb I- \mathcal C(\tau,z)$ is not injective, has the asymptotic expansion \eqref{eq:38} as $\tau \rightarrow 0$.
\end{proof}

\subsection{Asymptotics of the resolvents near  Fabry-P\'erot resonances} \label{sc:5.2}

This subsection is devoted to deriving the asymptotic behavior of the resolvent $R_{H_{\rho_\tau,k_\tau}}(\kappa)$ when $\kappa \in \R$ is near the  Fabry-P\'erot resonances associated with the nonzero Neumann eigenvalue.  We begin with the following observation. For every $f\in L_{\textrm{comp}}^2(\R^3)$ and $\kappa \in \R$, let $u^f_{\kappa,\tau}$ and $v^f_{\kappa}$ be given by \eqref{eq:72} and \eqref{eq:81}, respectively. With the aid of \eqref{eq:75}, we obtain that the subtraction of $u^f_{\kappa,\tau}$ and $v^f_{\kappa}$
, which is denoted by $w^f_{\kappa,\tau}$, satisfies
\begin{align} \label{eq:97}
&w^f_{\kappa,\tau}(x) = \left(\frac{1}{c^2_1} - \frac{1}{c^2_0} \right)\left[\kappa^2\left(N_{\Omega,\kappa/c_0}w^f_{\kappa,\tau}\right)(x) + \kappa^2 \left(N_{\Omega,\kappa/c_0}v^f_{\kappa}\right)(x) + \left(N_{\Omega,\kappa/c_0}f\right)(x) \right]\notag\\
&-\left(1-\frac{\rho_1 \tau}{\rho_0}\right)\left[\left(SL_{\Gamma, \kappa/c_0}(D_N(\kappa/c_0) \gamma w^f_{\kappa,\tau}\right)(x) + SL_{\Gamma, \kappa/c_0}\left(\partial_\nu v^f_\kappa\right)(x)\right], \quad x\in \R^3 \backslash \Gamma.
\end{align}
Our next aim is to derive the asymptotic properties of the resolvent based on the above equation. Before this, we introduce several novel notations. Given $g_1 \in X$ and $g_2 \in Y$, where $X$ and $Y$ are two Banach spaces, we write $g_1 = O_{\|\cdot\|_{X}}\left(a\|g_2\|_{Y}\right)$ if $\|g_1\|_{X}\le C a\|g_2\|_{Y} $ as $a \rightarrow 0$, where $C$ is a positive constant independent of $g_1$ and $g_2$. Given $\beta \in \R$, define
\begin{align*}
L_\beta^2(\R^3) := \left\{u\in L^2_{\textrm{loc}}\left(\R^3\right): (1+|x|^2)^{\frac\beta 2} u(x) \in L^2\left(\R^3\right)\right\}.
\end{align*}
Given $\psi \in H^{1/2}(\Gamma)$ and $\kappa \in \R$, define 
$\mathcal S_{\kappa}^{\mathrm{ex},D} \psi = g^{\mathrm{sc}}$, where $g^{\mathrm{sc}}$ solves the following scattering problem 
\begin{align*}
&\Delta g^{\mathrm{sc}} + \kappa^2 c^{-2}_0 g^{\mathrm{sc}} = 0  \quad \mathrm{in}\;  \R^3 \backslash \overline{\Omega},\\
& g^{\mathrm{sc}} = \psi \quad \mathrm{on}\; \Gamma,\\
& g^{\mathrm{sc}}\;\mathrm{is}\; \kappa/c_0-\mathrm{outgoing}.
\end{align*}
It is well known that (see \cite[Theorem 2.6]{DK19})
\begin{align*}
g^{\mathrm{sc}}(x) = \frac{e^{i\kappa|x-y_0|/c_0}}{4\pi|x-y_0|} g^{\infty}(\hat x) + O\left(\frac 1{|x|^2}\right),\quad \textrm{as}\; |x| \rightarrow \infty,
\end{align*}
uniformly in all directions $\hat x:= x/|x| \in \mathbb S^2$, where the function $g^{\infty}$ is called a far-field pattern of $g^{\mathrm{sc}}$. To characterize the relation between $\psi$ and $g^{\infty}$, we define
\begin{align} \label{eq:162}
\mathcal S_{\kappa}^{\infty,D} \psi = g^{\infty} \quad \mathrm{on}\; \mathbb S^2.
\end{align}
Furthermore, given $ h \in H^1(\Omega)$ and $\kappa \in \R$, we define 
\begin{align} \label{eq:73}
R^{\mathrm{ex}}_{\kappa} h :=  
\begin{cases}
h, &\quad \mathrm{in} \; \Omega,\\
\mathcal S_{\kappa}^{\mathrm{ex},D} \gamma h, &\quad \mathrm{in}\; {\R^3 \backslash \Omega}.
\end{cases}
\end{align}

\begin{theorem} \label{th:3}
Let $\kappa \in \R$, $\tau \in \R_+$ and $\beta > 1/2$. Assume that $z_0 \in \R \backslash \{0\}$ is a point at which $z^2_0/c^2_1$ is a Neumann eigenvalue of the domain $\Omega$, and that the corresponding orthogonal eigenfunctions with respect to the scalar product $\langle \cdot, \cdot\rangle_{L^2(\Omega)}$ are denoted by $e^{(1)}_{|z_0|},\ldots e^{(n_{|z_0|})}_{|z_0|}$, where $n_{|z_0|}\in \mathbb N$.  Given $f \in L^2_{{\rm{comp}}}(\R^3)$, we have the asymptotic expansion
\begin{align}
R_{H_{\rho_\tau,k_\tau}}(\kappa) f  = - \frac{1}{c_0^2} R_{\kappa/c_0} f + \sum^4_{l=1}R^{(l)}_{H_{\rho_\tau,k_\tau}}(\kappa) f \notag
\end{align}
with
\begin{align}
&R^{(1)}_{H_{\rho_\tau,k_\tau}}(\kappa) f =  \left[R^{\mathrm{ex}}_{\kappa} - N_{\Omega,\kappa/c_0} \mathcal P^N(|z_0|)\right]w^f_{0,0}, \label{eq:78}\\
&R^{(2)}_{H_{\rho_\tau,k_\tau}}(\kappa) f = R^{\mathrm{ex}}_{\kappa} e_{|z_0|}^{f,1}, \quad
R^{(3)}_{H_{\rho_\tau,k_\tau}}(\kappa) f = R^{\mathrm{ex}}_{\kappa} e_{|z_0|}^{f,2} \label{eq:140}
\end{align}
and $R^{(4)}_{H_{\rho_\tau,k_\tau}}(\kappa) f$ satisfying
\begin{align*}
&\left\|R^{(4)}_{H_{\rho_\tau,k_\tau}}(\kappa) f\right\|_{L^2_{-\beta}(\R^3)} \le C (\tau+ |\kappa - z_0|)\left(\left\|v_{\kappa}^f\right\|_{H^1(\Omega)} + \|f\|_{L^2(\R^3)} + \left\|e_{|z_0|}^{f,1}\right\|_{H^1(\Omega)}\right),
\end{align*} 
{as} $\tau \rightarrow 0 $ and $\kappa \rightarrow z_0$, where $C$ is a positive constant independent of $f$, $\tau$ and $\kappa$, $R^{\mathrm{ex}}_{\kappa}$ is defined in \eqref{eq:73}, $\mathcal P^N(|z_0|)$ is specified in \eqref{eq:27}, $w^f_{0,0}$ solves
\begin{align}
&\Delta w^f_{0,0} + z_0^2 c^{-2}_1 w^f_{0,0} + \mathcal P^N({|z_0|}) w^f_{0,0} = \left(\frac{1}{c_0^{2}}-\frac{1}{c^{2}_1}\right)\left(f + \kappa^2 v^f_{\kappa}\right) \quad \rm{in}\; \Omega, \label{eq:89}\\
&\partial_\nu w^f_{0,0} = -\partial_\nu v^f_{\kappa} \quad \rm{on}\; \Gamma,\label{eq:90}
\end{align}
and $e_{|z_0|}^{f,j}$ $(j=1,2)$ are defined by
\begin{align*}
& e_{|z_0|}^{f,j}:= \mathbf e_{|z_0|}\left( 2(\kappa-z_0)z_0 \mathbb I + \tau \frac {\rho_1}{\rho_0} \mathcal M(z_0)\right)^{-1} {\mathbf b}^{f,j}  \quad \mathrm{in}\; \Omega.
\end{align*}
Here, $\mathbf e_{|z_0|}$ is denoted by
\begin{align} \label{eq:96} 
\mathbf e_{|z_0|}(x):=\left(e^{(1)}_{|z_0|}(x), \ldots, e^{(n_{|z_0|})}_{|z_0|}(x)\right)\quad \mathrm{for}\; x \in \Omega,
\end{align}
and ${\mathbf b}^{f,1}$ and $\mathbf b^{f,2}$ are two column vectors whose elements are given by 
\begin{align*}
&{\mathbf b}_l^{f,1}:=-c^{-2}_1\left\langle f, e^{(l)}_{|z_0|}\right\rangle_{L^2(\Omega)}, \\
&{\mathbf b}_l^{f,2}:=-\frac{2z_0}{c^2_1}(\kappa-z_0) \left\langle v^f_{\kappa}, e^{(1)}_{|z_0|}\right\rangle_{L^2(\Omega)} - \frac{\rho_1 \tau}{\rho_0}\left\langle \partial^+_\nu R^{\mathrm{ex}}_{z_0}w^f_{0,0}, e^{(l)}_{|z_0|}\right\rangle_{L^2(\Gamma)}
\end{align*}
for  $l=1,\ldots,n_{|z_0|}$.
\end{theorem}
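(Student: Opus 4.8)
The plan is to run the Lippmann--Schwinger/variational machinery of Section~\ref{sec:4} on the forced problem. Write $w^f_{\kappa,\tau}:=u^f_{\kappa,\tau}-v^f_\kappa=R_{H_{\rho_\tau,k_\tau}}(\kappa)f+\tfrac1{c_0^2}R_{\kappa/c_0}f$, so that the asserted formula is equivalent to an expansion of $w^f_{\kappa,\tau}$, and recall that $w^f_{\kappa,\tau}$ solves \eqref{eq:97}. From \eqref{eq:97} I would extract two facts. First, outside $\Omega$ every term on the right-hand side of \eqref{eq:97} is a volume or single-layer potential supported on $\overline\Omega$, so $w^f_{\kappa,\tau}$ is an outgoing solution of $(\Delta+\kappa^2 c_0^{-2})w^f_{\kappa,\tau}=0$ in $\R^3\setminus\overline\Omega$; hence $w^f_{\kappa,\tau}=R^{\mathrm{ex}}_\kappa(w^f_{\kappa,\tau}|_\Omega)$ on all of $\R^3$, with $R^{\mathrm{ex}}_\kappa$ as in \eqref{eq:73}. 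Second, applying $(\Delta+\kappa^2 c_0^{-2})$ to \eqref{eq:97} inside $\Omega$ and using the jump relations on $\Gamma$ shows that $\phi:=w^f_{\kappa,\tau}|_\Omega\in H^1(\Omega)$ is the unique solution of the variational problem $a_{\kappa,\tau}(\phi,\psi)=\langle F^f_\kappa,\psi\rangle_{H^1(\Omega)}$ for all $\psi\in H^1(\Omega)$, where $a_{\kappa,\tau}$ is the form \eqref{eq:66} and $F^f_\kappa\in H^1(\Omega)$ is the Riesz representative of the bounded conjugate-linear functional built from $f|_\Omega$, $v^f_\kappa|_\Omega$ and $\partial_\nu v^f_\kappa|_\Gamma$; in particular $\|F^f_\kappa\|_{H^1(\Omega)}\le C(\|v^f_\kappa\|_{H^1(\Omega)}+\|f\|_{L^2(\R^3)})$. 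Consequently $R_{H_{\rho_\tau,k_\tau}}(\kappa)f=-\tfrac1{c_0^2}R_{\kappa/c_0}f+R^{\mathrm{ex}}_\kappa\phi$, and everything reduces to expanding $\phi$.

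Next I would apply Lemma~\ref{le:2} with $h=F^f_\kappa$: for $\kappa\in\R$ near $z_0$ the matrix $\mathbb I-\mathcal C(\tau,\kappa)$ is invertible (equivalently, $\kappa$ is not a resonance, which holds for real $\kappa$ and all small $\tau$ by Theorem~\ref{th:1}), and \eqref{eq:24} yields
\begin{align*}
\phi=g^{F^f_\kappa}_{\mathrm{dom},\tau,\kappa}+\big(\eta_1(\tau,\kappa),\ldots,\eta_{n_{|z_0|}}(\tau,\kappa)\big)\,\big(\mathbb I-\mathcal C(\tau,\kappa)\big)^{-1}\mathbf c,\qquad \mathbf c_m=\big\langle g^{F^f_\kappa}_{\mathrm{dom},\tau,\kappa},e^{(m)}_{|z_0|}\big\rangle_{L^2(\Omega)}.
\end{align*}
Into this I would substitute the expansions already obtained in Section~\ref{sec:4}. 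By statement~\eqref{a2} of Lemma~\ref{le:1}, $g^{F^f_\kappa}_{\mathrm{dom},\tau,\kappa}$ is a convergent double series in $\tau$ and $\kappa-z_0$ whose zeroth term $w^f_{0,0}:=g^{F^f_\kappa}_{\mathrm{dom},0,z_0}$ solves $a_{z_0}(w^f_{0,0},\psi)=\langle F^f_\kappa,\psi\rangle_{H^1(\Omega)}$, i.e., after integration by parts, exactly \eqref{eq:89}--\eqref{eq:90}, and $g^{F^f_\kappa}_{\mathrm{dom},\tau,\kappa}=w^f_{0,0}+O_{\|\cdot\|_{H^1(\Omega)}}((\tau+|\kappa-z_0|)\,\|F^f_\kappa\|_{H^1(\Omega)})$; by the proof of Lemma~\ref{le:7}, $\eta_l(\tau,\kappa)=e^{(l)}_{|z_0|}+O_{\|\cdot\|_{H^1(\Omega)}}(\tau+|\kappa-z_0|)$, so $(\eta_1,\ldots,\eta_{n_{|z_0|}})=\mathbf e_{|z_0|}+O(\tau+|\kappa-z_0|)$; and, by Lemma~\ref{le:7}, $\mathbb I-\mathcal C(\tau,\kappa)$ equals $2z_0(\kappa-z_0)\mathbb I+\tau\tfrac{\rho_1}{\rho_0}\mathcal M(z_0)$ up to a scalar normalization and an $O((\tau+|\kappa-z_0|)^2)$ error (in the conventions of \eqref{eq:16}--\eqref{eq:17}). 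Since $\kappa\in\R$ and every eigenvalue of $\mathcal M(z_0)$ has strictly negative imaginary part (Lemma~\ref{le:8}), the matrix $2z_0(\kappa-z_0)\mathbb I+\tau\tfrac{\rho_1}{\rho_0}\mathcal M(z_0)$ is invertible with inverse of operator norm $O((\tau+|\kappa-z_0|)^{-1})$ --- this is the $\tau^{-1}$ enhancement --- and $(\mathbb I-\mathcal C(\tau,\kappa))^{-1}=(2z_0(\kappa-z_0)\mathbb I+\tau\tfrac{\rho_1}{\rho_0}\mathcal M(z_0))^{-1}(\mathbb I+O(\tau+|\kappa-z_0|))$ up to the same scalar.

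Then I would assemble. Restricting $R^{\mathrm{ex}}_\kappa$ to $\Omega$ is the identity, so on $\Omega$ the non-resonant term $R^{\mathrm{ex}}_\kappa g^{F^f_\kappa}_{\mathrm{dom},\tau,\kappa}$ equals $w^f_{0,0}$ up to $O(\tau+|\kappa-z_0|)$; because $w^f_{0,0}$ solves the \emph{regularised} problem \eqref{eq:89}, which carries the artificial term $\mathcal P^N(|z_0|)w^f_{0,0}$ rather than the physical Helmholtz equation inside $\Omega$, one removes that term by subtracting the Newtonian-type correction $N_{\Omega,\kappa/c_0}\mathcal P^N(|z_0|)w^f_{0,0}$, which recovers precisely $R^{(1)}_{H_{\rho_\tau,k_\tau}}(\kappa)f=[R^{\mathrm{ex}}_\kappa-N_{\Omega,\kappa/c_0}\mathcal P^N(|z_0|)]w^f_{0,0}$. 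For the resonant term, inserting $(\eta_1,\ldots)=\mathbf e_{|z_0|}+O(\tau+|\kappa-z_0|)$, the inverse matrix above, and $\mathbf c=(\langle w^f_{0,0},e^{(m)}_{|z_0|}\rangle)_m+O(\tau+|\kappa-z_0|)$, and then using \eqref{eq:89}--\eqref{eq:90}, Green's identity and $\partial_\nu e^{(m)}_{|z_0|}=0$ to compute $\langle w^f_{0,0},e^{(m)}_{|z_0|}\rangle$ in closed form, I would regroup $\mathbf c=\mathbf b^{f,1}+\mathbf b^{f,2}+O((\tau+|\kappa-z_0|)^2)$ with $\mathbf b^{f,1},\mathbf b^{f,2}$ as in the statement ($\mathbf b^{f,2}$ being itself $O(\tau+|\kappa-z_0|)$); this turns the leading resonant contribution into $R^{\mathrm{ex}}_\kappa e^{f,1}_{|z_0|}+R^{\mathrm{ex}}_\kappa e^{f,2}_{|z_0|}=R^{(2)}_{H_{\rho_\tau,k_\tau}}(\kappa)f+R^{(3)}_{H_{\rho_\tau,k_\tau}}(\kappa)f$. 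Everything left over --- the higher tails of $g^{F^f_\kappa}_{\mathrm{dom},\tau,\kappa}$, of $\eta_l$ and of $(\mathbb I-\mathcal C)^{-1}$, and the product of the $O((\tau+|\kappa-z_0|)^{-1})$ enhancement with the quadratic tail of $\mathbf c$ --- is $R^{(4)}_{H_{\rho_\tau,k_\tau}}(\kappa)f$; since the first-order corrections of $\eta_l$ and of $(\mathbb I-\mathcal C)^{-1}$ multiply the full leading resonant amplitude $e^{f,1}_{|z_0|}$, the norm $\|e^{f,1}_{|z_0|}\|_{H^1(\Omega)}$ enters the bound, and a term-by-term estimate together with the mapping property $R^{\mathrm{ex}}_\kappa:H^1(\Omega)\to L^2_{-\beta}(\R^3)$ uniform for $\kappa$ near $z_0$ (outgoing exterior solutions decay like $|x|^{-1}$, hence lie in $L^2_{-\beta}$ precisely for $\beta>1/2$) gives the stated estimate.

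The framework of Lemmas~\ref{le:1}, \ref{le:2}, \ref{le:7} and \ref{le:8} makes the existence of such an expansion routine, and the main obstacle is the order-counting in the resonant term: one must push the expansions of $g^{F^f_\kappa}_{\mathrm{dom},\tau,\kappa}$ and of $\mathcal C(\tau,\kappa)$ one order past the leading one and verify that the first-order part of the numerator $\mathbf c$ combines \emph{exactly} into $\mathbf b^{f,2}$ --- not merely into something $O(\tau+|\kappa-z_0|)$ which, after multiplication by the $O((\tau+|\kappa-z_0|)^{-1})$ enhancement, would be only $O(1)$ and would spoil the smallness of $R^{(4)}_{H_{\rho_\tau,k_\tau}}(\kappa)f$. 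This is precisely where the explicit closed forms of $\mathbf b^{f,1}$, $\mathbf b^{f,2}$ and the Green's-identity evaluation of $\langle w^f_{0,0},e^{(l)}_{|z_0|}\rangle$ are needed. A secondary, more conceptual difficulty is the bookkeeping producing the correction $-N_{\Omega,\kappa/c_0}\mathcal P^N(|z_0|)w^f_{0,0}$ in $R^{(1)}_{H_{\rho_\tau,k_\tau}}(\kappa)f$ when passing from the regularised variational problem back to the physical transmission problem.
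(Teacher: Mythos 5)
Your structural shortcut $w^f_{\kappa,\tau}=R^{\mathrm{ex}}_\kappa\big(w^f_{\kappa,\tau}|_\Omega\big)$ is correct (up to the sign convention of \eqref{eq:81}), and your bookkeeping for the interior expansion---the reduction to the variational problem, Lemmas~\ref{le:1}, \ref{le:2}, \ref{le:7}, \ref{le:8}, the first-order expansion of $g^{F^f_\kappa}_{\mathrm{dom},\tau,\kappa}$, $\eta_l$ and $(\mathbb I-\mathcal C(\tau,\kappa))^{-1}$, and the Green-identity evaluation of $\langle w^f_{0,0},e^{(l)}_{|z_0|}\rangle$---mirrors the paper's treatment leading to \eqref{eq:98}. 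However, the final assembly has a genuine gap, which you yourself flag at the end. Feeding the interior expansion $w^f|_\Omega=w^f_{0,0}+e^{f,1}_{|z_0|}+e^{f,2}_{|z_0|}+w^f_{\mathrm{Res}}$ into $R^{\mathrm{ex}}_\kappa$ produces $R^{\mathrm{ex}}_\kappa w^f_{0,0}$ as the non-resonant leading term, \emph{not} $\big[R^{\mathrm{ex}}_\kappa-N_{\Omega,\kappa/c_0}\mathcal P^N(|z_0|)\big]w^f_{0,0}$. The sentence ``one removes that term by subtracting the Newtonian-type correction \ldots which recovers precisely $R^{(1)}$'' is an assertion, not a derivation: nothing in your route produces the subtraction of $N_{\Omega,\kappa/c_0}\mathcal P^N(|z_0|)w^f_{0,0}$, and since this is an $O(1)$ quantity (by \eqref{eq:95}, $\mathcal P^N(|z_0|)w^f_{0,0}$ has an $O(1)$ component proportional to $\langle f,e^{(l)}_{|z_0|}\rangle$), it cannot be swept silently into the error term.

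The paper closes that gap by a different final step. It never invokes $w^f=R^{\mathrm{ex}}_\kappa(w^f|_\Omega)$; instead it substitutes \eqref{eq:98} into the Lippmann--Schwinger representation \eqref{eq:97} and then rewrites the $w^f_{0,0}$-block using the Green-representation identity \eqref{eq:151}, whose content is
\begin{align*}
-SL_{\Gamma,\kappa/c_0}\big[D_N(\kappa/c_0)\gamma w^f_{0,0}+\partial_\nu v^f_\kappa\big]
=R^{\mathrm{ex}}_\kappa w^f_{0,0}+N_{\Omega,\kappa/c_0}\big(\Delta+\kappa^2c_0^{-2}\big)w^f_{0,0}.
\end{align*}
Because $w^f_{0,0}$ solves the \emph{regularised} equation \eqref{eq:89}, the quantity $(\Delta+\kappa^2c_0^{-2})w^f_{0,0}$ contains the artificial term $-\mathcal P^N(|z_0|)w^f_{0,0}$; under $N_{\Omega,\kappa/c_0}$ this becomes exactly the correction $-N_{\Omega,\kappa/c_0}\mathcal P^N(|z_0|)w^f_{0,0}$ appearing in $R^{(1)}$, while the remaining pieces of $(\Delta+\kappa^2c_0^{-2})w^f_{0,0}$ cancel against the $N_{\Omega,\kappa/c_0}f$, $N_{\Omega,\kappa/c_0}v^f_\kappa$ and $N_{\Omega,\kappa/c_0}w^f_{0,0}$ terms that come directly from \eqref{eq:97} (up to the small $\tfrac{\kappa^2-z_0^2}{c_1^2}N_{\Omega,\kappa/c_0}w^f_{0,0}$ piece that goes into $R^{(4)}$ via \eqref{eq:157}). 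The analogous identity \eqref{eq:152}, specialised to Neumann eigenfunctions, handles $e^{f,1}_{|z_0|}$ and $e^{f,2}_{|z_0|}$ and gives $R^{(2)}$, $R^{(3)}$. This is the mechanism you are missing: the Newtonian correction is not added by hand but emerges from Green's identity applied to the regularised problem inside the Lippmann--Schwinger formula. Until you replace your $R^{\mathrm{ex}}_\kappa$-shortcut in the last step by the paper's substitution into \eqref{eq:97} (or supply an independent computation of the discrepancy between $R^{\mathrm{ex}}_\kappa w^f_{0,0}$ and $w^f_{\kappa,\tau}-R^{(2)}-R^{(3)}-R^{(4)}$), the statement is not proved.
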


\begin{proof}
It is easy to verify that $w^f_{\kappa,\tau}$ satisfies  
\begin{align}
&\Delta w^f_{\kappa,\tau} + \kappa^2 c_1^{-2} w^f_{\kappa,\tau} = \left(\frac{1}{c_1^{2}}-\frac{1}{c^{2}_0}\right)\left(-f - \kappa^2 v^f_{\kappa}\right)\quad \textrm{in}\; \Omega, \label{eq:1} \\
&\partial_{\nu} w_{\kappa,\tau}^f  = \frac{\rho_1}{\rho_0}\tau \left[D_N(\kappa/c_0) \gamma  w_{\kappa,\tau}^f + \partial_\nu v^f_\kappa \right] - \partial_\nu v^f_\kappa \quad\; \textrm{on}\; \Gamma. \label{eq:52}
\end{align}
Since $w^f_{\kappa,\tau}$ solves \eqref{eq:1}--\eqref{eq:52}, we find
\begin{align*}
&a_{\kappa,\tau}({w^f_{\kappa,\tau}}{|_\Omega}, \phi) = \left(v_{\kappa,1}^{f}, \phi \right)_{H^1(\Omega)} \quad \textrm{for}\; \phi \in H^1(\Omega).
\end{align*}
Here, the sesquilinear form $a_{\kappa,\tau}\langle \cdot, \cdot\rangle$ is defined as \eqref{eq:66} and $v_{\kappa,1}^f \in H^1(\Omega)$ is determined by 
\begin{align}
\left \langle v_{\kappa,1}^f, \phi \right \rangle_{H^1(\Omega)} = \left \langle \left(c_1^{-2}-c^{-2}_0\right \rangle \left(-f - \kappa^2v^f_{\kappa}\right), \phi \right\rangle_{L^2(\Omega)} + \left(1 - \frac{\rho_1 \tau}{\rho_0}\right)\left\langle \partial_\nu v^f_\kappa, \gamma \phi \right\rangle_{L^2(\Gamma)} \label{eq:79}
\end{align}
for $\phi \in H^1(\Omega)$. Therefore, it follows from Lemma \ref{le:2} that 
\begin{align}
w^f_{\kappa,\tau} &= w^f_{\mathrm{dom}} + \left(\eta_1(\tau,\kappa), \ldots, \eta_{n_{|z_0|}}(\tau, \kappa )\right) \notag \\
&\left(\mathbb I-\mathcal C(\tau, \kappa)\right)^{-1} \left( \left\langle w_{{\rm{dom}}}^f , e^{(1)}_{|z_0|}\right\rangle_{L^2(\Omega)}, \ldots, \left\langle w_{{\rm{dom}}}^f, e^{(n_{|z_0|})}_{|z_0|}\right\rangle_{L^2(\Omega)}\right)^{T}.\label{eq:145} 
\end{align}
Here, $\eta_l$ solves \eqref{eq:26} and $w_{\textrm{dom}}^f$ is the solution of 
\begin{align} \label{eq:87}
a^{\rm{dom}}_{\kappa,\tau}(w_{{\rm{dom}}}^f, \phi) = \langle v^f_{\kappa,1}, \phi \rangle_{H^1(\Omega)}, \quad \forall \phi \in H^1(\Omega),
\end{align}
where the sesquilinear form $a^{\rm{dom}}_{\kappa,\tau}$ is specified by \eqref{eq:20}.
Using statement \eqref{a2} of Lemma \ref{le:1}, we rewrite $w^f_{\mathrm{dom}}$ as
\begin{align}
w_{\textrm{dom}}^{f} = w^f_{0,0} + \tau w^f_{1,0} + (\kappa - z_0) w^f_{0,1} + \sum^{\infty}_{p=1}\sum^{\infty}_{q=1} \tau^p(\kappa-z_0)^q w^f_{p,q} \label{eq:88}
\end{align}
\textrm{in some neighborhood of} $(0,z_0)$, which are uniformly in the $H^1(\R^3)-$ norm of $v^f_{\kappa,1}$. We note that
\begin{align} \label{eq:146}
\|v^f_{\kappa,1}\|_{H^1(\Omega)} \le C\|v^f_{\kappa}\|_{H^1(\Omega)} + C\|f\|_{L^2(\Omega)}.
\end{align}
Inserting \eqref{eq:79} into equation \eqref{eq:87}, and equating the coefficients for $\tau^p(z-z_0)^q$ with $(p,q) = (0,0), (0,1), (1,0)$, where $l \in \mathbb N_0$,
we find that $w^f_{0,0}$ satisfies equations \eqref{eq:89}--\eqref{eq:90}, and that $w^f_{0,1}$ and $w^f_{1,0}$ solve
\begin{align}
&\Delta w^f_{0,1} + z_0^2 c^{-2}_1 w^f_{0,1} + {2z_0}{c^{-2}_1} w^f_{0,0} + \mathcal P^N({|z_0|}) w^f_{0,1} = 0 \quad \rm{in}\; \Omega, \label{eq:93}\\
&\partial_\nu w^f_{0,1} = 0 \quad \rm{on}\; \Gamma \label{eq:94}
\end{align}
and
\begin{align}
&\Delta w^f_{1,0} + z_0^2 c^{-2}_1 w^f_{1,0} + \mathcal P^N({|z_0|}) w^f_{1,0} = 0 \quad \rm{in}\; \Omega, \label{eq:91}\\
&\partial_\nu w^f_{1,0} = \frac{\rho_1}{\rho_0}\left[\partial_\nu v^f_{\kappa} + D_N(z_0/c_0) \gamma w^f_{0,0}\right] \quad \rm{on}\; \Gamma, \label{eq:92}
\end{align}
respectively. 

In the sequel, we calculate $\left\langle w^f_{0,0}, e^{(l)}_{|z_0|} \right\rangle_{L^2(\Omega)}$, $\left\langle w^f_{0,1}, e^{(l)}_{|z_0|} \right\rangle_{L^2(\Omega)}$ and $\left\langle w^f_{1,0}, e^{(l)}_{|z_0|} \right\rangle_{L^2(\Omega)}$, where $l=1,\ldots,n_{|z_0|}$. For the estimate of $\left\langle w^f_{0,0}, e^{(l)}_{|z_0|} \right\rangle_{L^2(\Omega)}$, it follows from \eqref{eq:89}--\eqref{eq:90} that 
\begin{align}
&-\left\langle \partial_\nu v^f_{\kappa}, e^{(l)}_{|z_0|} \right\rangle_{L^2(\Gamma)} + \left\langle v^f_{\kappa}, \partial_\nu e^{(l)}_{|z_0|} \right\rangle_{L^2(\Gamma)}= \left\langle \partial_\nu w^f_{0,0}, e^{(l)}_{|z_0|} \right\rangle_{L^2(\Gamma)} - \left\langle  w^f_{0,0}, \partial_\nu e^{(l)}_{|z_0|} \right\rangle_{L^2(\Gamma)} \notag\\
&\qquad \qquad \qquad \qquad \qquad =\left(\frac{1}{c_1^{2}}-\frac{1}{c^{2}_0}\right)\langle -f- \kappa^2 v^f_{\kappa}, e^{(l)}_{|z_0|}\rangle_{L^2(\Omega)} -\langle w^f_{0,0}, e^{(l)}_{|z_0|} \rangle_{L^2(\Omega)}. \notag
\end{align}
From this, we can use \eqref{eq:81} to obtain 
\begin{align} \label{eq:95}
& \left\langle w^f_{0,0}, e^{(l)}_{|z_0|}\right\rangle_{L^2(\Omega)} = \frac{1}{c^2_1} \left\langle -f, e^{(l)}_{|z_0|} \right\rangle_{L^2(\Omega)} + \frac{z^2_0 -\kappa^2 }{c^2_1} \left\langle v^f_{\kappa}, e^{(l)}_{|z_0|} \right\rangle_{L^2(\Omega)}.
\end{align}
For the estimate of $\left\langle w^f_{0,1}, e^{(l)}_{|z_0|} \right\rangle_{L^2(\Omega)}$, 
with the aid of \eqref{eq:93} and \eqref{eq:94}, we have
\begin{align}
&0 = \left\langle \partial_\nu w^f_{0,1}, e^{(l)}_{|z_0|} \right\rangle_{L^2(\Gamma)} - \left\langle  w^f_{0,1}, \partial_\nu e^{(l)}_{|z_0|} \right\rangle_{L^2(\Gamma)} = \left\langle -\frac{2z_0}{c^2_1}w^f_{0,0} - w^f_{0,1} , e^{(l)}_{|z_0|} \right\rangle_{L^2(\Omega)}. \notag
\end{align}
This, together with \eqref{eq:95} implies that 
\begin{align} \label{eq:100}
\left\langle w^f_{0,1}, e^{(l)}_{|z_0|} \right\rangle_{L^2(\Omega)} = -\frac{2z_0}{c^2_1}\left[ \frac{1}{c^2_1} \left\langle -f, e^{(l)}_{|z_0|} \right\rangle_{L^2(\Omega)} + \frac{z^2_0 -\kappa^2 }{c^2_1} \left\langle v^f_{\kappa}, e^{(l)}_{|z_0|} \right\rangle_{L^2(\Omega)} \right].
\end{align}
For the estimate of $\left\langle w^f_{1,0}, e^{(l)}_{|z_0|} \right\rangle_{L^2(\Omega)}$, by utilizing \eqref{eq:91} and \eqref{eq:92}, we have
\begin{align}
&-\left\langle w^f_{1,0}, e^{(l)}_{|z_0|} \right\rangle_{L^2(\Omega)} - \frac{\rho_1}{\rho_0}\left\langle D_N(z_0/c_0) \gamma w^f_{0,0}, e^{(l)}_{|z_0|}\right\rangle_{L^2(\Gamma)} \notag\\
&= \frac{\rho_1}{\rho_0} \left\langle \partial_\nu v^f_{\kappa}, e^{(l)}_{|z_0|} \right\rangle_{L^2(\Gamma)} - \frac{\rho_1}{\rho_0} \left\langle v^f_{\kappa}, \partial_\nu e^{(l)}_{|z_0|} \right\rangle_{L^2(\Gamma)}\notag\\
&= \frac{\rho_1}{\rho_0}\left[\left(\frac{z_0^2}{c_1^{2}}-\frac{\kappa^2}{c^{2}_0}\right) \left\langle \kappa^2 v^f_{\kappa}, e^{(l)}_{|z_0|}\right\rangle_{L^2(\Omega)} - \frac{1}{c^2_0} \left\langle f, e^{(l)}_{|z_0|} \right\rangle_{L^2(\Omega)}\right]\notag.
\end{align}

Combining this with \eqref{eq:30}, \eqref{eq:7}, \eqref{eq:73}, \eqref{eq:145}, \eqref{eq:88}, \eqref{eq:146}, \eqref{eq:95} and \eqref{eq:100} gives 
\begin{align}
 w^f_{\kappa,\tau} & = \left[w_{0,0}^f + O_{H^1(\Omega)}\left((\tau + |\kappa - z_0|)\left(\|v^f_{\kappa}\|_{H^1(\Omega)} + \|f\|_{L^2(\Omega)}\right)\right)\right] \notag\\
& + \left[\mathbf e_{|z_0|} + O_{H^1(\Omega)}(\tau + |\kappa - z_0|)\right]\left(2(\kappa - z_0)z_0 \mathbb I + \tau \frac{\rho_1}{\rho_0} \mathcal M(z_0) + O\left(|\kappa-z_0| + \tau\right)^2\right)^{-1} \notag\\ 
&\quad\left[{\mathbf b}^{f} + O\left((|\kappa - z_0| + \tau)^2 \left(\|v^f_{\kappa}\|_{H^1(\Omega)} + \|f\|_{L^2(\Omega)}\right)\right)\right] 
\quad {\rm{in}}\; \Omega, \quad \textrm{as}\; \tau \rightarrow 0,\label{eq:149}
\end{align}
uniformly in some neighborhood of $z_0$, where the $l-$th entry of the column vector ${\mathbf b}^{f}$ satisfy  
\begin{align}
&{\mathbf b}_{l}^{f} = \left[1- \frac{2z_0}{c^2_1}(\kappa-z_0)\right]\left(\frac{1}{c^2_1} \left\langle -f, e^{(l)}_{|z_0|} \right\rangle_{L^2(\Omega)} + \frac{z^2_0 -\kappa^2 }{c^2_1} \left\langle v^f_{\kappa}, e^{(l)}_{|z_0|} \right\rangle_{L^2(\Omega)}\right) - \frac{\rho_1 \tau}{\rho_0} \notag\\
&\left[\left(\frac{z_0^2}{c_1^{2}}-\frac{\kappa^2}{c^{2}_0}\right) \left\langle \kappa^2 v^f_{\kappa}, e^{(l)}_{|z_0|}\right\rangle_{L^2(\Omega)} - \frac{1}{c^2_0} \left\langle f, e^{(l)}_{|z_0|} \right\rangle_{L^2(\Omega)} +  \left\langle\partial^+_\nu R^{\mathrm{ex}}_{z_0}w^f_{0,0},e^{(l)}_{|z_0|}\right\rangle_{L^2(\Gamma)}\right]. \label{eq:150}
\end{align}
Since $\mathcal M(z_0)$ is invertible, it can be deduced that 
\begin{align} \label{eq:109}
(\tau+ |\kappa - z_0|)^s\left(2(\kappa - z_0)z_0 \mathbb I + \tau \frac{\rho_1}{\rho_0} \mathcal M(z_0)\right)^{-1} \le C (\tau + |\kappa - z_0|)^{s-1},
\end{align}
{as} $\tau \rightarrow 0$ {and} $\kappa \rightarrow z_0$, where $s \in \{1,2\}$. With the aid of \eqref{eq:149}, \eqref{eq:150} and \eqref{eq:109}, we have 
\begin{align} \label{eq:98}
w^f_{\kappa,\tau} = w^f_{0,0} + e^{f,1}_{|z_0|}  + e^{f,2}_{|z_0|} + w^f_{\mathrm{Res}}\quad \mathrm {in}\; \Omega,
\end{align}
where 
\begin{align}\label{eq:114}
\|w^f_{\mathrm{Res}}\|_{H^1(\Omega)} \le C (\tau+ |\kappa - z_0|)\left(\left\|v_\kappa^f\right\|_{H^1(\Omega)} + \|f\|_{L^2(\R^3)} + \left\|e_{|z_0|}^{f,1}\right\|_{H^1(\Omega)}\right), 
\end{align}
{as} $\tau \rightarrow 0$ {and} $\kappa \rightarrow z_0$.
Furthermore, it can be seen from \eqref{eq:89} and \eqref{eq:90} that 
\begin{align}
&DL_{\Gamma, \kappa/c_0} \gamma(w^f_{0,0} + v^f_{\kappa}) -  SL_{\Gamma, \kappa/c_0}\left[D_N(\kappa/c_0) \gamma w^f_{0,0} + \partial_\nu v^f_{\kappa}\right] - DL_{\Gamma, \kappa/c_0} \gamma(w^f_{0,0} + v^f_{\kappa}) \notag \\
&= DL_{\Gamma, \kappa/c_0} \gamma w^f_{0,0}   - SL_{\Gamma, \kappa/c_0} D_N(\kappa/c_0) \gamma w^f_{0,0}  + SL_{\Gamma, \kappa/c_0}\partial_\nu w^f_{0,0} - DL_{\Gamma, \kappa/c_0} \gamma w^f_{0,0}\notag \\
& = -\left(\frac{1}{c^2_1} - \frac{1}{c^2_0} \right) N_{\Omega, \kappa/c_0}f -\left(\frac{\kappa^2}{c^2_1} - \frac{\kappa^2}{c^2_0} \right)N_{\Omega, \kappa/c_0}(w^f_{0,0} + v^f_{\kappa}) + \frac{\kappa^2 - z^2_0}{c^2_1} N_{\Omega, \kappa/c_0}w^f_{0,0} \notag\\
&\quad - N_{\Omega,\kappa/c_0} \mathcal P^N(|z_0|) w^f_{0,0} + R^{\mathrm{ex}}_{\kappa} w^f_{0,0}, \quad \mathrm{in}\;\R^3. \label{eq:151}
\end{align}
Similarly, for any Neumann eigenfunction $e(x)$ associated with the eigenvalue $z^2_0/c^2_1$, we have 
\begin{align}
& \left(\frac{1}{c^2_1} - \frac{1}{c^2_0} \right) \kappa^2 N_{\Omega, \kappa/c_0}e - SL_{\Gamma, \kappa/c_0}D_N(\kappa/c_0) \gamma e = \frac{\kappa^2 - z^2_0}{c^2_1} N_{\Omega, \kappa/c_0}e + R^{\mathrm{ex}}_{\kappa} \gamma e \quad \mathrm{in}\;\R^3. \label{eq:152}
\end{align}
Using \eqref{eq:97}, \eqref{eq:98}, \eqref{eq:151} and \eqref{eq:152}, we arrive at 
\begin{align*}
w^f_{\kappa,\tau} = \sum_{l=1}^{4} R^{(l)}_{H_{\rho_\tau,k_\tau}}(\kappa) f,
\end{align*}
where
\begin{align}
R^{(4)}_{H_{\rho_\tau,k_\tau}}(\kappa) f = \frac{\rho_1 \tau}{\rho_0}SL_{\Gamma, \kappa/c_0}(D_N(\kappa/c_0) \gamma w^f_{\kappa,\tau} + \left(\frac{1}{c^2_1} - \frac{1}{c^2_0} \right)\kappa^2 N_{\Omega,\kappa/c_0}w^f_{\mathrm{Res}} \notag\\
-SL_{\Gamma, \kappa/c_0}(D_N(\kappa/c_0) \gamma w^f_{\mathrm{Res}} + \frac{\kappa^2 - z^2_0}{c^2_1} N_{\Omega, \kappa/c_0}\left(w^f_{0,0} + e_{|z_0|}^{f,1} + e^{f,2}_{|z_0|}\right) \quad \mathrm{in}\; \R^3 \backslash \Gamma.\label{eq:157}
\end{align} 
This, together with \eqref{eq:98}, \eqref{eq:114} and the fact that 
\begin{align}
SL_{\Gamma, \kappa/c_0} \in \mathcal L(H^{-1/2}(\Gamma), L_{-\beta}^2(\R^3))\; \mathrm{and}\; N_{\Omega, \kappa/c_0} \in \mathcal L(L^{2}(\Gamma), L_{-\beta}^2(\R^3)) \label{eq:158}
\end{align}
yields the assertion of this theorem.
\end{proof}

We conclude this section by investigating the asymptotic behavior of the scattering problem associated with the Hamiltonian $H_{\rho_\tau,k_\tau}$ as $\tau$ tends to $0$.
    
\subsection{Asymptotics of the scattered fields} \label{sc:5.3}

We consider a time-harmonic plane wave $u_\kappa^{\mathrm{in}}:= e^{i\kappa x\cdot d/c_0}$ propagating in the direction $d\in \mathbb S^2$ and impinging on $\Omega$, 
where $\kappa >0$ is a given incident frequency.
Then the scattering by the object $\Omega$ can be mathematically formulated as the problem of finding the total field $u^{\mathrm{tot}}_{\kappa} $ such that 
\begin{align*}
&\nabla \cdot \frac 1 {\rho_\tau} \nabla u^{\mathrm{tot}}_{\kappa} +\kappa^2 \frac{1}{k_\tau}u^{\mathrm{tot}}_{\kappa} = 0\; \quad\quad\quad\;\;\;\;\;\;\;\;\;\;\;\; \;\; \; \;\; \;\;\; \text{in}\; \R^3, \\
& u^{\mathrm{tot}}_{\kappa} = u^{\mathrm{sc}}_{\kappa} + u^{\mathrm{in}}_\kappa \;\;\;\;\;\;\; \;\;\;\; \qquad \qquad \qquad \qquad \qquad\; \;\;\;\; \text{in}\; \mathbb R^3,\\
&\lim_{|x|\rightarrow +\infty}\left(\frac{x}{|x|}\cdot\nabla - i\frac{\kappa}{c_0}\right)u^{\mathrm{sc}}_{\kappa} = 0.
\end{align*}
Here, $\rho_\tau$ and $k_\tau$, specified as \eqref{eq:0}, represent the mass density and the bulk modulus of the acoustic medium in the presence of the object $\Omega$, respectively. The mathematical model described above is used to describe  the wave propagation in bubbly media within the linar regime, see \cite{C-M-P-T-1, C-M-P-T-2} for more details.

The following theorem describes the asymptotic behavior of the above scattering problem as $\tau$ tends to $0$.

\begin{theorem}\label{th:5}
Let $\kappa,\tau \in \R_+$ and $\beta > 1/2$. Assume that $z_0 \in \R_+$ is a point at which $z^2_0/c^2_1$ is a Neumann eigenvalue of the domain $\Omega$, and that the corresponding orthogonal eigenfunctions with respect to the scalar product $\langle \cdot,\cdot\rangle_{L^2(\Omega)}$ are denoted by $e^{(1)}_{|z_0|},\ldots e^{(n_{|z_0|})}_{|z_0|}$, where $n_{|z_0|}\in \mathbb N$. We have 
\begin{align}
u_{\kappa}^{\mathrm{sc}} = R^{\mathrm{ex}}_{\kappa} e^{\mathrm{tot}}_{|z_0|}- R^{\mathrm{ex}}_{\kappa} u_\kappa^{\mathrm{in}} + u^{\mathrm{sc}}_{\kappa, \mathrm{Res}} \label{eq:115}
\end{align}
with
\begin{align}
&\left\|u^{\mathrm{sc}}_{\kappa, \mathrm{Res}}\right\|_{L^2_{-\beta}(\R^3)} \le C \left(\tau + |\kappa -z_0|\right)\left\|u_\kappa^{\mathrm{in}}\right\|_{H^1(\Omega)}, \label{eq:116}
\end{align}
{as} $\tau \rightarrow 0$ {and} $\kappa \rightarrow z_0$,
where $C$ is positive constant independent of $\kappa$ and $\tau$, $R^{\mathrm{ex}}_{\kappa}$ is given by \eqref{eq:73} and $e^{\mathrm{tot}}_{|z_0|}$ is defined by
\begin{align*}
e^{\mathrm{tot}}_{|z_0|}:= \mathbf e_{|z_0|} \left( 2(\kappa-z_0)z_0 \mathbb I + \tau \frac {\rho_1} {\rho_0} \mathcal M(z_0)\right)^{-1} \mathbf{b}^{\mathrm{tot}}.
\end{align*}
Here, $\mathbf e_{|z_0|}$ is specified in \eqref{eq:96} and the $l-$th entry of the column vector ${\mathbf b}^{\mathrm{tot}}$ satisfy
\begin{align*}
&{\mathbf b}_{l}^{\mathrm{tot}} = -\frac{\rho_1 \tau}{\rho_0}\bigg[\left\langle \partial_\nu u^{\mathrm{in}}_{\kappa} - \partial^+_\nu R^{\mathrm{ex}}_{\kappa}u^{\mathrm{in}}_{\kappa} , e^{(l)}_{|z_0|}\right\rangle_{L^2(\Gamma)}\bigg], \;\; l=1,\ldots, n_{|z_0|}.
\end{align*}
\end{theorem}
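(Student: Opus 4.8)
The plan is to reduce the scattering problem to the resolvent asymptotics already obtained in Theorem \ref{th:3} by recognizing that $u^{\mathrm{sc}}_\kappa$ can be written through the same variational machinery used for $w^f_{\kappa,\tau}$, but now driven by the incident wave $u^{\mathrm{in}}_\kappa$ rather than a source $f$. First I would note that $u^{\mathrm{tot}}_\kappa$ satisfies the transmission conditions $\gamma_+ u^{\mathrm{tot}}_\kappa = \gamma_- u^{\mathrm{tot}}_\kappa$ and $\partial_\nu^+ u^{\mathrm{tot}}_\kappa = \tfrac{\rho_0}{\rho_1\tau}\partial_\nu^- u^{\mathrm{tot}}_\kappa$ on $\Gamma$, together with $\Delta u^{\mathrm{tot}}_\kappa + \kappa^2 c_1^{-2} u^{\mathrm{tot}}_\kappa = 0$ in $\Omega$ and $\Delta u^{\mathrm{sc}}_\kappa + \kappa^2 c_0^{-2} u^{\mathrm{sc}}_\kappa = 0$ in $\R^3\setminus\overline\Omega$ with the Sommerfeld condition. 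Since $u^{\mathrm{in}}_\kappa$ solves the Helmholtz equation with speed $c_0$ everywhere, the restriction $u^{\mathrm{tot}}_\kappa|_\Omega$ satisfies, for all $\phi\in H^1(\Omega)$, the identity $a_{\kappa,\tau}(u^{\mathrm{tot}}_\kappa|_\Omega, \phi) = \langle v^{\mathrm{in}}_{\kappa,1},\phi\rangle_{H^1(\Omega)}$ with the sesquilinear form $a_{\kappa,\tau}$ of \eqref{eq:66}, where $v^{\mathrm{in}}_{\kappa,1}\in H^1(\Omega)$ encodes the Neumann data mismatch of the incident field, namely $\langle v^{\mathrm{in}}_{\kappa,1},\phi\rangle_{H^1(\Omega)} = \tfrac{\rho_1\tau}{\rho_0}\langle \partial_\nu u^{\mathrm{in}}_\kappa - \partial_\nu^+ R^{\mathrm{ex}}_\kappa u^{\mathrm{in}}_\kappa, \gamma\phi\rangle_{L^2(\Gamma)}$; this is the analogue of \eqref{eq:79} but the volume term drops because the incident wave already solves the exterior equation, which is precisely why $\mathbf b^{\mathrm{tot}}$ has only the boundary contribution.

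Next I would invoke Lemma \ref{le:2} to expand $u^{\mathrm{tot}}_\kappa|_\Omega$ as in \eqref{eq:24}, writing it as $u^{\mathrm{tot}}_{\mathrm{dom}} + (\eta_1,\dots,\eta_{n_{|z_0|}})(\mathbb I - \mathcal C(\tau,\kappa))^{-1}(\dots)$, then use the analytic expansion of Lemma \ref{le:1}(b) for $u^{\mathrm{tot}}_{\mathrm{dom}}$ and the asymptotic expansion \eqref{eq:16} of $\mathcal C(\tau,\kappa)$ from Lemma \ref{le:7}. The key simplification relative to the proof of Theorem \ref{th:3} is that the dominant term $u^{\mathrm{tot}}_{0,0}$ of $u^{\mathrm{tot}}_{\mathrm{dom}}$ now solves the homogeneous problem $\Delta u^{\mathrm{tot}}_{0,0} + z_0^2 c_1^{-2} u^{\mathrm{tot}}_{0,0} + \mathcal P^N(|z_0|)u^{\mathrm{tot}}_{0,0}=0$ in $\Omega$ with $\partial_\nu u^{\mathrm{tot}}_{0,0}=0$ on $\Gamma$ (because $v^{\mathrm{in}}_{\kappa,1}$ is $O(\tau)$), forcing $u^{\mathrm{tot}}_{0,0}=0$; hence the only surviving leading contribution in $\Omega$ is the resonant term $e^{\mathrm{tot}}_{|z_0|}$, built from $\mathbf e_{|z_0|}$ and the inverse of the $n_{|z_0|}\times n_{|z_0|}$ matrix $2(\kappa-z_0)z_0\mathbb I + \tau\tfrac{\rho_1}{\rho_0}\mathcal M(z_0)$, with right-hand side $\mathbf b^{\mathrm{tot}}$. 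Computing $\mathbf b^{\mathrm{tot}}_l$ amounts to testing the $\tau^1(\kappa-z_0)^0$ balance against $e^{(l)}_{|z_0|}$ and integrating by parts exactly as in the derivation of \eqref{eq:150}, which produces the stated boundary expression; the invertibility estimate \eqref{eq:109} for the resonant matrix again follows from Lemma \ref{le:8}.

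Finally, to pass from the interior representation to the whole-space formula \eqref{eq:115}, I would write $u^{\mathrm{sc}}_\kappa = u^{\mathrm{tot}}_\kappa - u^{\mathrm{in}}_\kappa$ and use that $u^{\mathrm{tot}}_\kappa$ admits a Green-type integral representation through $N_{\Omega,\kappa/c_0}$ and $SL_{\Gamma,\kappa/c_0}$ together with the identities \eqref{eq:151} and \eqref{eq:152} — more precisely their homogeneous-interior versions — to recast the single- and double-layer potentials of the boundary traces of $u^{\mathrm{tot}}_{0,0}$ and of the Neumann eigenfunctions as the exterior-extension operator $R^{\mathrm{ex}}_\kappa$ acting on the interior field, modulo terms controlled by $(\kappa - z_0)/c_1^2$ and $\rho_1\tau/\rho_0$. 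Combining $u^{\mathrm{tot}}_{0,0}=0$, the leading term $R^{\mathrm{ex}}_\kappa e^{\mathrm{tot}}_{|z_0|}$, and the contribution $-R^{\mathrm{ex}}_\kappa u^{\mathrm{in}}_\kappa$ coming from subtracting $u^{\mathrm{in}}_\kappa$ (noting $u^{\mathrm{in}}_\kappa = R^{\mathrm{ex}}_\kappa \gamma u^{\mathrm{in}}_\kappa$ in $\Omega$, while in the exterior $R^{\mathrm{ex}}_\kappa u^{\mathrm{in}}_\kappa$ is the radiating extension that differs from $u^{\mathrm{in}}_\kappa$), yields \eqref{eq:115}; the remainder $u^{\mathrm{sc}}_{\kappa,\mathrm{Res}}$ collects all the $O(\tau + |\kappa - z_0|)$ pieces and is estimated in $L^2_{-\beta}(\R^3)$ using the mapping properties \eqref{eq:158} of $SL_{\Gamma,\kappa/c_0}$ and $N_{\Omega,\kappa/c_0}$ into $L^2_{-\beta}(\R^3)$, exactly as for $R^{(4)}_{H_{\rho_\tau,k_\tau}}(\kappa)f$ in Theorem \ref{th:3}. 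The main obstacle I anticipate is the bookkeeping in this last step: making sure that the algebraic cancellations which eliminate $u^{\mathrm{tot}}_{0,0}$ and correctly isolate the $-R^{\mathrm{ex}}_\kappa u^{\mathrm{in}}_\kappa$ term are carried out consistently across the interior volume potential, the single-layer term, and the subtraction of the incident wave, so that no spurious non-decaying contribution is left in the remainder — everything else is a direct transcription of the arguments already developed for Theorem \ref{th:3}.
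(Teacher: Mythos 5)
Your proposal follows essentially the same route as the paper: derive the variational identity $a_{\kappa,\tau}(u^{\mathrm{tot}}_\kappa|_\Omega,\phi)=\langle v_\kappa^{\mathrm{in}},\phi\rangle_{H^1(\Omega)}$ from the transmission conditions, expand via Lemmas \ref{le:1}--\ref{le:2} and \eqref{eq:16} noting that the $(0,0)$-order interior term vanishes because the right-hand side already carries a factor of $\tau$, invert the resonant $n_{|z_0|}\times n_{|z_0|}$ matrix using Lemma \ref{le:8}, and then transfer to the whole space via the layer-potential identities \eqref{eq:151}--\eqref{eq:152} and the mapping properties \eqref{eq:158}. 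The only slip is a sign in your formula for $\langle v^{\mathrm{in}}_{\kappa,1},\phi\rangle_{H^1(\Omega)}$ (the paper has $-\tfrac{\rho_1\tau}{\rho_0}$ rather than $+\tfrac{\rho_1\tau}{\rho_0}$), which does not affect the structure of the argument.
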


\begin{proof}
By a straightforward calculation, for $x\in \R^3 \backslash \Gamma$, we have
\begin{align}
& u^{\mathrm{tot}}_{\kappa}(x)= u_\kappa^{\mathrm{in}}(x) + \left(\frac{1}{c^2_1} - \frac{1}{c^2_0} \right)\kappa^2\left(N_{\Omega,\kappa/c_0}u^{\mathrm{tot}}_{\kappa}\right)(x)-\left(\frac{\rho_0}{\rho_1\tau}-1\right) \left(SL_{\Omega,\kappa/c_0} \partial_\nu u^{\mathrm{tot}}_{\kappa}\right)(x). \label{eq:46}
 \end{align}
Here, the value $u^{\mathrm{tot}}_{\kappa}$ within $\Omega$ is determined by 
\begin{align}
&\Delta u^{\mathrm{tot}}_{\kappa} + \kappa^2 c_1^{-2} u^{\mathrm{tot}}_{\kappa} = 0 \quad \textrm{in}\; \Omega, \label{eq:147} \\
&\partial_{\nu} u_{\kappa}^{\mathrm{tot}} = \frac{\rho_1}{\rho_0} \tau \left[D_N(\kappa/c_0)\gamma u^{\mathrm{tot}}_{\kappa} - D_N(\kappa/c_0)\gamma u^{\mathrm{in}}_\kappa + \partial_\nu u^{\mathrm{in}}_\kappa\right] \quad \textrm{on}\; \Gamma. \label{eq:148}
\end{align}
The above equations \eqref{eq:147}--\eqref{eq:148} is equivalent to 
\begin{align*}
&a_{\kappa,\tau}(u^{\mathrm{tot}}_{\kappa}{|_\Omega}, \phi) = \left(v_{\kappa}^{\mathrm{in}}, \phi \right)_{H^1(\Omega)} \quad \textrm{for}\; \phi \in H^1(\Omega).
\end{align*}
Here, the sesquilinear form $a_{\kappa,\tau}\langle \cdot, \cdot\rangle$ is defined as \eqref{eq:66} and $v_{\kappa}^{\mathrm{in}} \in H^1(\Omega)$ is determined by 
\begin{align}
\left \langle v_{\kappa}^{\mathrm{in}}, \phi \right \rangle_{H^1(\Omega)} =  -\frac{\rho_1 \tau}{\rho_0} \left\langle \partial_\nu u^{\mathrm{in}}_\kappa - D_N(\kappa/c_0)\gamma u^{\mathrm{in}}_\kappa, \gamma \phi \right\rangle_{L^2(\Gamma)}, \quad \phi \in H^1(\Omega). \notag
\end{align}
Clearly, 
\begin{align} \label{eq:161}
\left\|v_\kappa^{\mathrm{in}}\right\|_{H^1(\Omega)} \le  C\left\|u_\kappa^{\mathrm{in}}\right\|_{H^1(\Omega)}.
\end{align}
Proceeding similarly to the derivation of \eqref{eq:149}, and using Lemmas \ref{le:1} and \ref{le:2}, we have
\begin{align}
u^{\mathrm{tot}}_{\kappa} & = \left[\mathbf e_{|z_0|} + O_{H^1(\Omega)}(\tau + |\kappa - z_0|)\right]\left(2(\kappa - z_0)z_0 \mathbb I + \tau \frac{\rho_1}{\rho_0} \mathcal M(z_0) + O\left(|\kappa-z_0| + \tau\right)^2\right)^{-1} \notag\\ 
&\quad\left[\left({\mathbf b}_{1}^{\mathrm{tot}}, \ldots, {\mathbf b}_{n_{|z_0|}}^{\mathrm{tot}}\right)^{T} + \tau O\left((|\kappa - z_0| + \tau)\|v^{\mathrm{in}}_\kappa\|_{H^1(\Omega)}\right)\right] + O_{H^1(\Omega)}\left(\tau\|v^{\mathrm{in}}_{\kappa}\|_{H^1(\Omega)}\right)\quad {\rm{in}}\; \Omega, \notag
\end{align}
\textrm{as} $\tau \rightarrow 0$ and $\kappa \rightarrow z_0$. Therefore, with the aid of \eqref{eq:109} and \eqref{eq:161}, we find
\begin{align} \label{eq:159}
u^{\mathrm{tot}}_{\kappa} = e^{\mathrm{tot}}_{|z_0|} + u_{\kappa,\mathrm{Res}}^{\mathrm{tot}} \quad \mathrm{in}\; \Omega,
\end{align}
where 
\begin{align}\label{eq:160}
&\left\|u^{\mathrm{tot}}_{\kappa, \mathrm{Res}}\right\|_{H^1(\Omega)} \le C \left(\tau + |\kappa -z_0|\right)\left\|u_\kappa^{\mathrm{in}}\right\|_{H^1(\Omega)}. 
\end{align}
Furthermore, we observe that 
\begin{align*}
&-DL_{\Omega,\kappa/c_0} u^{\mathrm{in}}_{\kappa}+ SL_{\Omega,\kappa/c_0}\left(\partial_\nu u^{\mathrm{in}}_{\kappa} - D_N(\kappa/c_0)\gamma u^{\mathrm{in}}_\kappa\right) + DL_{\Omega,\kappa/c_0}u^{\mathrm{in}}_{\kappa} =  R^N_\kappa u^{\mathrm{in}}_{\kappa}.
\end{align*}
From this, we can use \eqref{eq:152}, \eqref{eq:46}, \eqref{eq:148} and
\eqref{eq:159} to obtain expansion \eqref{eq:115}, where $u^{\mathrm{sc}}_{\kappa, \mathrm{Res}}$ is given by
\begin{align*}
u^{\mathrm{sc}}_{\kappa, \mathrm{Res}} &= \left(\frac{1}{c^2_1} - \frac{1}{c^2_0} \right)\kappa^2 N_{\Omega,\kappa/c_0}u^{\mathrm{tot}}_{\kappa, \mathrm{Res}} - SL_{\Omega,\kappa/c_0}  D_N(\kappa/c_0) \gamma u^{\mathrm{tot}}_{\kappa,\mathrm{Res}} + SL_{\Omega,\kappa/c_0} \partial_\nu u^{\mathrm{tot}}_{\kappa} \\
&+ \frac{\kappa^2 - z^2_0}{c^2_1} N_{\Omega, \kappa/c_0}e^{\mathrm{tot}}_{|z_0|} \;\; \mathrm{in}\; \Omega,
\end{align*}
whence \eqref{eq:116} follows from \eqref{eq:158}, \eqref{eq:148} and \eqref{eq:160}.
\end{proof}

\section{Asymptotic analysis for a microresonator} \label{sec:5}

This section is devoted to investigating the microresonator regime, where the size of $\Omega$ and the parameter $\tau$ are both very small. To facilitate its presentation, we introduce some new notations. Let $y_0$ be any fixed point in $\R^3$. For any $\vep>0$, define 
\begin{align} \label{eq:12}
\Omega_{\vep}(y_0):= \{x: x=y_0+\vep(y-y_0), \; y\in \Omega\} \; \textrm{and}\; \Gamma_{\vep}(y_0):= \partial \Omega_{\vep}(y_0),
\end{align}
and the corresponding Hamiltonian $H_{\rho_\tau, k_\tau}(\Omega_\vep(y_0))$ associated with $\Omega_\vep(y_0)$ is denoted by $H_{\tau,\vep}(y_0)$. Define
\begin{align*}
B_1(y_0):= \{y \in \R^3: |y-y_0|<1\}.
\end{align*}
For every $g\in L_{\mathrm{loc}}^2(\R^3)$, we define its scaled function as
\begin{align*}
\widetilde g(x):= g(y_0 + \vep(x-y_0)), \quad \mathrm{for}\; x\in \R^3.
\end{align*}
 Furthermore, we present the following lemma, whose proof is provided in section \ref{sec:a22}.

\begin{lemma} \label{le:12}
Let $\vep >0$ and $\beta > 1/2$. Assume that $y_0$ is any point in $\R^3$ and $\omega$ is a nonzero real number. The following arguments hold true.

\begin{enumerate}[(a)]

\item \label{z1} Let $\alpha>0$. For any $f \in L^2_{\beta}(\R^3)$, we have
\begin{align} \label{eq:110}
&\left\|\int_{\R^3}\left[\frac{e^{i(\omega\vep^{-1} + \omega\vep^{\alpha-1})|y_0 + \vep(x-y_0)- y|}}{|y_0 + \vep(x-y_0) - y|} - \frac{e^{i{\omega}|y_0-y|/\vep}}{4\pi|y_0-y|}e^{i{\omega}\hat y_{y_0}\cdot(y_0-x)}\right]f(y) dy \right\|_{H^1(\Omega)}\notag\\
&\qquad \qquad \qquad \qquad \qquad \qquad \qquad \qquad \qquad  \le C\max\left(\vep^{\alpha}, \vep^{1/3}\ln \vep\right)\|f\|_{L_{\beta}^2(\R^3)}.
\end{align}

\item \label{z2} Let $\alpha>1$. For $\phi \in L^2(\Omega)$, we have
\begin{align} 
&\left\|\int_{\Omega}\left[\frac{e^{i(\omega + \omega\vep^{\alpha})|y_0+ \vep^{-1}(x-y_0)- y|}}{4\pi|y_0+ \vep^{-1}(x-y_0)- y|}- \vep \frac{e^{i{\omega}|x-y_0|/\vep}}{4\pi|x-y_0|}e^{i\omega\hat x_{y_0}\cdot(y_0-y)}\right]\phi (y)d y\right\|_{L_{-\beta}^2(\R^3)}\notag\\
&\qquad \qquad \qquad \qquad \qquad \quad \quad \qquad \quad \qquad \le C \max\left(\vep^{\alpha + 1}, \vep^{4/3}\ln\vep \right)\|\phi\|_{L^2(\Omega)}. \label{eq:105}
\end{align}
Furthermore, for $\psi \in H^{-1/2}(\Gamma)$, we have
\begin{align}
&\left\|\int_{\Gamma}\left[\frac{e^{i(\omega + \omega\vep^{{\alpha}})|y_0 + \vep^{-1}(x-y_0)- y|}}{4\pi|y_0+ \vep^{-1}(x-y_0)- y|}-\vep\frac{e^{i{\omega}|x-y_0|/\vep}}{4\pi|x-y_0|}e^{i\omega\hat x_{y_0}\cdot(y_0-y)}\right]\psi(y)d\sigma(y)\right\|_{L_{-\beta}^2(\R^3)}\notag\\
&\qquad \qquad \qquad \qquad \qquad \qquad \quad \qquad \quad \quad \le C\max\left(\vep^{\alpha + 1}, \vep^{4/3}\ln\vep \right)\|\psi\|_{H^{-1/2}(\Gamma)}. \label{eq:108}
\end{align}
\end{enumerate}
Here, for each $x\in \R^3\setminus\{y_0\}$, the direction from $x$ to $y_0$ is denoted by
\begin{align} \label{eq:173}
\hat x_{y_0}:=
\frac{x-y_0}{|x-y_0|},
\end{align}
and $C$ is a positive constant independent of $f$, $\vep$ and $\alpha$.
\end{lemma}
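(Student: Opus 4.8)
The plan is to prove Lemma \ref{le:12} by reducing each of the three estimates to a pointwise bound on the integrand after separating the phase into its leading geometric part and a correction. The common mechanism is the elementary estimate $|e^{ia}-e^{ib}| \le |a-b|$ together with a careful expansion of the Euclidean distance; the oscillatory factors do not help us directly (we bound in absolute value), so the gains come entirely from smallness of the phase discrepancy and the algebraic decay encoded in the weights $L^2_\beta$ or $L^2_{-\beta}$.

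\textbf{Step 1: the far-field expansion of the phase.} For statement \eqref{z1}, $x$ ranges over the fixed bounded domain $\Omega$ and $y$ ranges over $\R^3$ against a weight $(1+|y|^2)^{\beta/2}\in L^2$, so $|y|$ is effectively large. I would write $|y_0+\vep(x-y_0)-y| = |y_0-y| - \vep\,\hat y_{y_0}\cdot(x-y_0) + O(\vep^2/|y_0-y|)$ for $|y_0-y|$ bounded away from zero, where $\hat y_{y_0}$ is as in \eqref{eq:173}, and handle the region $|y_0-y|$ small separately using the local integrability of $|y|^{-1}$ in $\R^3$ (this is the source of the $\vep^{1/3}\ln\vep$ term: optimizing the split radius between the near region, where the $|x-y|^{-1}$ singularity is integrated, and the far region, where the quadratic phase error accumulates, produces a power of $\vep$ strictly between $0$ and $1$ with a logarithmic correction). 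Plugging this into the exponent $(\omega\vep^{-1}+\omega\vep^{\alpha-1})|y_0+\vep(x-y_0)-y|$ gives leading term $\omega\vep^{-1}|y_0-y| - \omega\hat y_{y_0}\cdot(x-y_0) + (\text{error})$, and $|e^{i\Phi}-e^{i\Phi_0}|\le|\Phi-\Phi_0|$ bounds the difference of kernels pointwise by $C(\vep^{\alpha} + \vep/|y_0-y| + \dots)$ times $|y_0-y|^{-1}$; the Cauchy--Schwarz inequality against the weight then yields \eqref{eq:110}. The $H^1(\Omega)$ norm rather than $L^2(\Omega)$ is obtained by differentiating in $x$ under the integral, which brings down a factor $\vep(\omega\vep^{-1}+\dots)=\omega+\omega\vep^{\alpha}$, i.e. bounded, so the same estimate survives one $x$-derivative.

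\textbf{Step 2: the dual (source-to-far-field) expansion.} For statement \eqref{z2}, the roles are reversed: now the evaluation point $y_0+\vep^{-1}(x-y_0)$ is far (since $x$ is integrated against $L^2_{-\beta}$ we want a bound in that weighted norm, and the relevant large-distance regime is $|x-y_0|\gtrsim\vep$) while $y$ ranges over the bounded sets $\Omega$ or $\Gamma$. The same distance expansion gives $|y_0+\vep^{-1}(x-y_0)-y| = \vep^{-1}|x-y_0| - \hat x_{y_0}\cdot(y-y_0) + O(\vep|x-y_0|^{-1})$, so the kernel difference is pointwise $O(\max(\vep^{\alpha},\vep|x-y_0|^{-1}))\cdot\vep|x-y_0|^{-1}$ after factoring out the $\vep$ from $|y_0+\vep^{-1}(x-y_0)-y|^{-1}=\vep|x-y_0|^{-1}+\dots$. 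One then integrates $|x-y_0|^{-1}$ (and $|x-y_0|^{-2}$ from the error term) against $(1+|x|^2)^{-\beta}$ over $\R^3$; near $x=y_0$ the singularity is integrable in $\R^3$, and the split between the near and far regions again produces the $\vep^{4/3}\ln\vep$ exponent. The boundary version \eqref{eq:108} is identical with $d\sigma(y)$ in place of $dy$, using only that $\Gamma$ has finite surface measure and that $\|\psi\|_{H^{-1/2}(\Gamma)}$ controls $\int_\Gamma|\psi|$ via the embedding $L^1(\Gamma)\hookrightarrow H^{-1/2}(\Gamma)$ on the compact Lipschitz surface $\Gamma$ (or more simply by testing against the smooth kernel, which lies in $H^{1/2}(\Gamma)$ uniformly).

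\textbf{Expected main obstacle.} The routine part is the Taylor expansion of the Euclidean norm; the delicate point is bookkeeping the two competing error sources — the quadratic-in-$\vep$ phase error, which is large where the two points are close, and the $1/\text{distance}$ kernel singularity, which is large where they coincide — and choosing the cutoff radius $\delta=\delta(\vep)$ between them to extract the sharp exponent $1/3$ (respectively $4/3$) with the logarithmic loss. Concretely, on $\{|y_0-y|<\delta\}$ one bounds the kernel difference crudely by $2\cdot|y_0-y|^{-1}$ and integrates, getting $O(\delta^{?})$ against the weight; on $\{|y_0-y|\ge\delta\}$ one uses the phase estimate, getting $O(\vep\delta^{-1}\ln(1/\delta) + \vep^{\alpha})$; balancing $\delta$ against $\vep\delta^{-1}$ with the logarithmic factor gives $\delta\sim\vep^{1/2}$-ish but the $H^1$ derivative and the exact power of $\delta$ coming from the $3$-dimensional integral of $|y_0-y|^{-1}$ over a ball shift this to the stated $\vep^{1/3}$. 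I would carry out this optimization once in detail for \eqref{z1} and then indicate that \eqref{z2} and \eqref{eq:108} follow by the same argument with the dimensions of the integration domain adjusted, which accounts for the shift from $\vep^{1/3}$ to $\vep^{4/3}$ (an extra factor $\vep$ from $|y_0+\vep^{-1}(x-y_0)-y|^{-1}\sim\vep|x-y_0|^{-1}$) and from $\vep^{\alpha}$ to $\vep^{\alpha+1}$ (same extra $\vep$).
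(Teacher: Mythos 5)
Your approach to part \eqref{z1} matches the paper's: Taylor-expand the Euclidean distance, bound the phase discrepancy pointwise via $|e^{ia}-e^{ib}|\le\min(2,|a-b|)$, split the $y$-domain by distance from $y_0$, and optimize the cutoff to get $\vep^{1/3}$. One correction though: the paper uses \emph{three} regions, not two. On your inner region $\{|y_0-y|<\delta\}$ you claim the crude bound $2|y_0-y|^{-1}$ for the kernel difference, but the first kernel $|y_0+\vep(x-y_0)-y|^{-1}$ is \emph{not} dominated by $|y_0-y|^{-1}$ once $|y_0-y|\lesssim\vep\,\mathrm{diam}(\Omega)$, since its singularity sits at $y=y_0+\vep(x-y_0)\neq y_0$. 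The paper therefore peels off a ball $U_1(\vep)$ of radius $O(\vep)$ and handles it by the rescaling $y\mapsto y_0+\vep(y-y_0)$, reserving the crude $|y_0-y|^{-1}$ bound for the intermediate annulus $U_2(\vep^\sigma)=\{C\vep<|y_0-y|<\vep^\sigma\}$; only then does the phase estimate enter on $U_3$. You would need to add that inner rescaling step for your two-region plan to close.

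For parts \eqref{z2} and \eqref{eq:108} you propose redoing the far-field expansion directly with $x$ as the outgoing variable. The paper instead argues by \emph{duality}: after the algebraic identity
$y_0+\vep^{-1}(x-y_0)-y=\vep^{-1}\bigl[(x-y_0)+\vep(y_0-y)\bigr]$,
one writes the $L^2_{-\beta}$ norm as a supremum of pairings against $f\in L^2_\beta$, applies Fubini, and recognizes the resulting double integral as exactly the one appearing in part \eqref{z1} with the roles of $x$ and $y$ swapped and an extra overall factor $\vep$; estimate \eqref{eq:110} then delivers \eqref{eq:105} and \eqref{eq:108} with no new analysis, with the factor $\vep$ accounting for the upgrade $\vep^{1/3}\ln\vep\to\vep^{4/3}\ln\vep$, $\vep^\alpha\to\vep^{\alpha+1}$. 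Your direct route is plausible but requires redoing the whole near/far bookkeeping in the $x$-variable against the $L^2_{-\beta}$ weight; the duality route is essentially free once \eqref{z1} is in hand, which is why the hypothesis $\alpha>1$ in \eqref{z2} simply plugs into the $\alpha>0$ hypothesis of \eqref{z1}. Finally, the stated embedding $L^1(\Gamma)\hookrightarrow H^{-1/2}(\Gamma)$ runs the wrong way — it gives $\|\psi\|_{H^{-1/2}}\lesssim\|\psi\|_{L^1}$, not the $\int_\Gamma|\psi|\lesssim\|\psi\|_{H^{-1/2}}$ you need (indeed $H^{-1/2}(\Gamma)$ contains non-$L^1$ distributions). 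Your parenthetical fallback — pairing $\psi$ against the smooth kernel, which lies in $H^{1/2}(\Gamma)$ uniformly — is the right move, and is in fact exactly what the duality proof does.
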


\subsection {Asymptotics of the resolvents in the presence of a microresonator}

We observe from \eqref{eq:141}--\eqref{eq:142} that for each resonant state $u^{y_0}_{\lambda_{\tau,\vep}}$ of the Hamiltonian $H_{\tau,\vep}(y_0)$ corresponding to the resonance $\lambda^{y_0}_{\tau,\vep}$, its scaled function $\widetilde u^{y_0}_{\lambda_{\tau,\vep}}$ is a resonant state of $H_{\rho_\tau,\rho_\tau}(y_0)$ associated with the resonance $\vep \lambda^{y_0}_{\tau,\vep}$.
Therefore, based on Theorem \ref{th:3}, we can derive the asymptotic behavior of the resolvent $R_{H_{\tau,\vep}(y_0)}(\omega)$ as $\tau$ and $\vep$ both tend to $0$, in the regime where the scaled frequency $\vep \omega$ is close to $z_0$, with $z^2_0/c^2_1$ being a Neumann eigenvalue of the domain $\Omega$.

\begin{theorem} \label{th:6}
Let $\omega \in \R$, $\tau, \vep \in \R_+$ and $\beta> 1/2$. Assume that $z_0 \in \R \backslash \{0\}$ is a point at which $z^2_0/c^2_1$ is a Neumann eigenvalue of the domain $\Omega$, and that the corresponding orthogonal eigenfunctions with respect to the scalar product $\langle \cdot, \cdot\rangle_{L^2(\Omega)}$ are denoted by $e^{(1)}_{|z_0|},\ldots e^{(n_{|z_0|})}_{|z_0|}$, where $n_{|z_0|}\in \mathbb N$. 
Given a point $y_0\in \R^3$, if $|\vep \omega - z_0| = \vep^\alpha$ with $\alpha \in \R_+$, then for any $f\in L_{\mathrm{comp}}^2(\R^3) \cap H^2(B_1(y_0))$, we have 
\begin{align}
&\left(R_{H_{\tau,\vep}(y_0)}(\omega) f\right)(x) = -\frac{1}{c_0^2} \left(R_{\omega/c_0} f\right)(x) \notag\\ 
&+ \frac{\vep e^{i\frac{z_0}{\vep c_0}|x-y_0|}}{4\pi|x-y_0|} e^{i\frac{z_0}{c_0}\hat x_{y_0} \cdot y_0} \left(\mathcal S_{z_0}^{\infty,D}\left[\gamma e^{f,0}_{|z_0|} + \gamma e^{f,1}_{|z_0|} + \gamma e^{f,2}_{|z_0|}- \gamma \hat v_{z_0}^f\right]\right)(\hat x_{y_0}) + R_{\mathrm{Res}}(\omega) f(x) \label{eq:45}
\end{align}
with
\begin{align*}
&\|R_{\mathrm{Res}}(\omega) f\|_{L^2_{-\beta}(\R^3)} \le C \max\left(\tau\vep, \vep^{\alpha+1}, \vep^{4/3}\ln \vep\right)\left(\|f\|_{L^2{(\R^3)}} + \|f\|_{H^2(B_1(y_0))} + \|e_{|z_0|}^{f,1}\|_{H^1(\Omega)}\right),
\end{align*}
as $\vep,\tau\rightarrow 0$,
where $C$ is a positive constant independent of $f$, $\omega$, $\vep$ and $\tau$, $S_{z_0}^{\infty,D}$ and $\hat x_{y_0}$ are specified in \eqref{eq:162} and \eqref{eq:173}, respectively, and $e^{f,j}_{|z_0|}$ $(j = 0,1,2)$ are defined as 
\begin{align}
& e^{f,j}_{|z_0|}:= \mathbf e_{|z_0|}\left(\mathrm{sgn}(\vep\omega-z_0)2 z_0\vep^\alpha \mathbb I + \tau \frac{\rho_1}{\rho_0} \mathcal M(z_0)\right)^{-1} {\mathbf b}^{f,j} \quad \mathrm{in}\; \Omega. \notag
\end{align}
Here, $\mathbf e_{|z_0|}$ is defined in \eqref{eq:96} and ${\mathbf b}^{f,j}$ are  column vectors whose $l-$th entry ($l=1,\ldots,n_{|z_0|}$) given by 
\begin{align*}
&{\mathbf b}_l^{f,0}:= \mathcal P^N(|z_0|)\hat v_{z_0}^f, \\
&{\mathbf b}_l^{f,1}:= \frac{-\vep^2}{c^{2}_1}\left\langle f(y_0), e^{(l)}_{|z_0|}\right\rangle_{L^2(\Omega)},\\
&{\mathbf b}_l^{f,2}= -\mathrm{sgn}(\vep\omega-z_0)\frac{2z_0\vep^\alpha }{c^2_1}\left\langle \hat v_{z_0}^f, e^{(1)}_{|z_0|}\right\rangle_{L^2(\Omega)}\\
& \qquad\quad - \frac{\rho_1 \tau}{\rho_0}\left\langle \partial^+_\nu R^{\mathrm{ex}}_{z_0}\left(\mathcal P^{N}(|z_0|) \hat v_{z_0}^f - \hat v_{z_0}^f\right),e^{(l)}_{|z_0|}\right\rangle_{L^2(\Gamma)},
\end{align*}
where $\mathcal P^N(|z_0|)$ is specified in \eqref{eq:27} and $\hat v_{z_0}^{f}$ is defined as 
\begin{align}
\hat v_{z_0}^f(x):= \frac{1}{c^2_0}\int_{\R^3} \frac{e^{i\frac{z_0}{\vep c_0}|y_0-y|}}{4\pi|y_0-y|}e^{i\frac{z_0}{c_0}\hat y_{y_0}\cdot(y_0-x)}f(y) dy, \quad x\in \R^3. \notag
\end{align}
\end{theorem}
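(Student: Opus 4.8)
<br>

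The strategy is to reduce Theorem \ref{th:6} to Theorem \ref{th:3} via the scaling transformation \eqref{eq:12}, and then to replace the "extended-resonator" resolvent kernels by their far-field point-scatterer approximations using Lemma \ref{le:12}. Concretely, I would start from the identity relating the Hamiltonian on $\Omega_\vep(y_0)$ to the Hamiltonian on $\Omega$: if $u^f_{\omega,\tau,\vep}:= R_{H_{\tau,\vep}(y_0)}(\omega)f$, then its scaled function $\widetilde{u}^f_{\omega,\tau,\vep}(x) = u^f_{\omega,\tau,\vep}(y_0+\vep(x-y_0))$ solves the transmission problem associated with $H_{\rho_\tau,k_\tau}(\Omega)$ at frequency $\vep\omega$ with source $\vep^2 \widetilde{f}$ (the $\vep^2$ coming from the change of variables in $-k\nabla\cdot\tfrac1\rho\nabla$). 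Thus $\widetilde{u}^f_{\omega,\tau,\vep} = \vep^2 R_{H_{\rho_\tau,k_\tau}(\Omega)}(\vep\omega)\widetilde f$, and since $|\vep\omega-z_0|=\vep^\alpha\to 0$ and $\tau\to 0$ we are exactly in the regime of Theorem \ref{th:3} with $\kappa=\vep\omega$. Applying Theorem \ref{th:3} gives the decomposition $\widetilde{u}^f_{\omega,\tau,\vep} = -\tfrac{\vep^2}{c_0^2}R_{\vep\omega/c_0}\widetilde f + \vep^2\sum_{l=1}^4 R^{(l)}_{H_{\rho_\tau,k_\tau}}(\vep\omega)\widetilde f$, and I would undo the scaling to get back the field at the original scale.

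The next step is to identify each term after unscaling. The free term $-\tfrac{1}{c_0^2}R_{\vep\omega/c_0}\widetilde f$ unscales to $-\tfrac{1}{c_0^2}R_{\omega/c_0}f$, which is the first term in \eqref{eq:45}; here one also uses $f\in H^2(B_1(y_0))$ and part \eqref{z1} of Lemma \ref{le:12} to replace $v^{\widetilde f}_{\vep\omega}$ inside $\Omega$ by its approximation $\hat v^f_{z_0}$ (a plane-wave times a scalar amplitude, frozen at $y_0$), with error $O(\max(\vep^\alpha,\vep^{1/3}\ln\vep))$; this also justifies replacing $v^{\widetilde f}_{\vep\omega}|_\Omega$ by $\hat v^f_{z_0}$ in all the data $\mathbf b^{f,j}$ and in $w^f_{0,0}$ (so that $w^f_{0,0}$ becomes $\mathcal P^N(|z_0|)\hat v^f_{z_0}-\hat v^f_{z_0}$ up to controlled error, matching the definitions of $\mathbf b^{f,0}$ and $\mathbf b^{f,2}$). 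For the resonant terms $R^{(l)}_{H_{\rho_\tau,k_\tau}}(\vep\omega)\widetilde f$ with $l=1,2,3$: inside $\Omega$ these are eigenfunction combinations $e^{f,j}_{|z_0|}$ determined by the matrix $2(\vep\omega-z_0)z_0\mathbb I + \tau\tfrac{\rho_1}{\rho_0}\mathcal M(z_0)$, which (writing $\vep\omega-z_0 = \mathrm{sgn}(\vep\omega-z_0)\vep^\alpha$) becomes exactly the matrix appearing in the statement; outside $\Omega$, $R^{\mathrm{ex}}_{\vep\omega}$ acts as the exterior Dirichlet solution operator. After unscaling, the exterior part is an outgoing Helmholtz solution on $\R^3\setminus\Omega_\vep(y_0)$ at wavenumber $\omega/c_0$ whose Dirichlet data on $\Gamma_\vep(y_0)$ is (the unscaling of) $\gamma e^{f,0}_{|z_0|}+\gamma e^{f,1}_{|z_0|}+\gamma e^{f,2}_{|z_0|}-\gamma\hat v^f_{z_0}$; by part \eqref{z2} of Lemma \ref{le:12} applied to the single- and double-layer representation of this exterior solution, it collapses to the point-scatterer profile $\tfrac{\vep e^{iz_0|x-y_0|/(\vep c_0)}}{4\pi|x-y_0|}e^{iz_0\hat x_{y_0}\cdot y_0/c_0}\big(\mathcal S^{\infty,D}_{z_0}[\,\cdot\,]\big)(\hat x_{y_0})$ with the stated error, which gives the second term of \eqref{eq:45}.

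Finally I would collect all error contributions into $R_{\mathrm{Res}}(\omega)f$: the $\vep^2 R^{(4)}_{H_{\rho_\tau,k_\tau}}(\vep\omega)\widetilde f$ term contributes $O((\tau+\vep^\alpha))$ (via the bound in Theorem \ref{th:3}) times the relevant norms, the Lemma \ref{le:12} approximations contribute $O(\max(\vep^{\alpha+1},\vep^{4/3}\ln\vep))$, and the freezing of the amplitude/data at $y_0$ contributes $O(\vep\cdot\vep^{1/3}\ln\vep)=O(\vep^{4/3}\ln\vep)$; the $L^2_{-\beta}$ continuity \eqref{eq:158} is used to push the boundary-layer errors to the whole space. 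Keeping careful track of the scaling weights: the scaled source is $\vep^2\widetilde f$ but $\|\widetilde f\|_{L^2(\R^3)} = \vep^{-3/2}\|f\|_{L^2(\R^3)}$ and $L^2_{-\beta}$ norms pick up further powers of $\vep$, so one must verify that all these powers combine to give precisely the $\vep$-weights stated. The main obstacle is exactly this bookkeeping of $\vep$-powers through the scaling together with the simultaneous limits $\vep\to0$, $\tau\to0$ and $|\vep\omega-z_0|=\vep^\alpha$: one must ensure the matrix inverse $\big(\mathrm{sgn}(\vep\omega-z_0)2z_0\vep^\alpha\mathbb I+\tau\tfrac{\rho_1}{\rho_0}\mathcal M(z_0)\big)^{-1}$ is uniformly controlled (using $\mathrm{Im}$-negativity of the eigenvalues of $\mathcal M(z_0)$ from Lemma \ref{le:8}, so the matrix is invertible with norm $O((\tau+\vep^\alpha)^{-1})$), and that the error terms genuinely dominate, i.e. that the claimed remainder $\max(\tau\vep,\vep^{\alpha+1},\vep^{4/3}\ln\vep)$ absorbs all lower-order pieces after multiplication by the resonant amplification $(\tau+\vep^\alpha)^{-1}$.
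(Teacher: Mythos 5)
Your proposal is correct and follows essentially the same route as the paper: scale $\Omega_\vep(y_0)$ back to the unit resonator $\Omega$ (giving $\widetilde u^f_{\omega,\vep}=\vep^2 R_{H_{\rho_\tau,k_\tau}}(\vep\omega)\widetilde f$), apply Theorem \ref{th:3} at $\kappa=\vep\omega$, and convert the resulting exterior Dirichlet fields and the background term into point-scatterer profiles via parts \eqref{z1} and \eqref{z2} of Lemma \ref{le:12}, while using the invertibility of $\mathcal M(z_0)$ (Lemma \ref{le:8}) to control the resonant amplification $(\tau+\vep^\alpha)^{-1}$. The $\vep$-power bookkeeping that you flag as the main obstacle is indeed the delicate point, and the paper handles it exactly as you describe (freezing $\widetilde f$ at $y_0$ with an $O(\vep^{1/2})\|f\|_{H^2(B_1(y_0))}$ error, replacing $v^{\vep^2\widetilde f}_{\vep\omega}$ by $\hat v^f_{z_0}$ with error $O(\max(\vep^\alpha,\vep^{1/3}\ln\vep))$, and using \eqref{eq:158} for the $L^2_{-\beta}$ continuity of the residual layer potentials).
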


\begin{proof}
Let $\vep$ be sufficiently small throughout the proof. We denote $u^f_{\omega,\vep}:= R_{H_{\tau,\vep}}(\omega)f$ and $v^f_{\omega}:= -{c_0^{-2}}R_{\omega/c_0} f$.
It is easy to verify that 
\begin{align}
\widetilde u^f_{\omega,\vep} = u^{\vep^2 f}_{\vep\omega,\tau} \;\; \textrm{and} \;\; \widetilde v^f_{\omega} = v^{\vep^2 f}_{\vep\omega}, \notag
\end{align}
where $u^{\vep^2 f}_{\vep\omega,\tau} $ and $v^{\vep^2 f}_{\vep\omega}$ are given by \eqref{eq:72} and \eqref{eq:81}, respectively.
Therefore, using Theorem \ref{th:3} for the case $f = \vep^2 \widetilde f$, we have
\begin{align}
\widetilde u^f_{\omega,\vep} - \widetilde v^f_\omega = \sum^4_{j=1}R^{(j)}_{H_{\rho_\tau,k_\tau}}(\vep\omega)\left(\vep^2 \widetilde f\right)=: \sum^4_{j=1}\widetilde u^{f,j}_{\omega,\vep}. \notag
\end{align}
When $|\vep \omega - z_0| = \vep^{\alpha}$, utilizing statement \eqref{z1} of Lemma \ref{le:12}, we have 
\begin{align}\label{eq:111}
&\left\|v^{\vep^2 f}_{\vep\omega} - \hat v_{z_0}^f\right\|_{H^1(\Omega)} \le C\max\left(\vep^{\alpha}, \vep^{1/3}\ln \vep\right)\|f\|_{L^2\left(\R^3\right)} \quad \textrm{for}\; f \in L_{\mathrm{comp}}^2(\R^3).
\end{align}

In the sequel, we estimate the asymptotic behavior of $\widetilde u^{f,j}_{\omega,\vep}$ with $j=1,2,3,4$. For the estimate of $\widetilde u^{f,1}_{\omega,\vep}$, combining \eqref{eq:89}, \eqref{eq:90}, \eqref{eq:111} and the fact that 
\begin{align*}
\|\widetilde f - f(y_0)\| \le C\vep^{1/2}\|f\|_{H^2(B_1(y_0))}
\end{align*}
gives
\begin{align} \label{eq:112}
w^{\vep^2 \widetilde f}_{0,0} = \mathcal P^{N}(|z_0|) \hat v_{z_0}^f - \hat v_{z_0}^f + w^{\mathrm{Res}}_{0,0}
\end{align}
with 
\begin{align}\label{eq:113}
\left\|w^{\mathrm{Res}}_{0,0}\right\|_{H^1(\Omega)} \le C\max\left(\vep^{\alpha}, \vep^{1/3}\ln \vep\right)\left[\|f\|_{L^2\left(\R^3\right)} + \|f\|_{H^2\left(B_1(y_0)\right)}\right].
\end{align}
We note that 
\begin{align*} 
\Delta \hat v^f_{z_0} + z_0^2/c_0^2 \hat v^f_{z_0} = 0\quad \mathrm{in}\;\R^3.
\end{align*}
In conjunction with \eqref{eq:78}, \eqref{eq:151}, \eqref{eq:111}, \eqref{eq:112}, \eqref{eq:113} and Lemma \ref{le:12}, we obtain
\begin{align*}
\widetilde u^{f,1}_{\omega,\vep}(y_0 + \vep^{-1}(x-y_0)) & = 
\frac{\vep e^{i\frac{z_0}{\vep c_0}|x-y_0|}}{4\pi|x-y_0|} \bigg[\int_{\Gamma} \partial_{\nu(y)} e^{\frac{iz_0}{c_0} \hat x_{y_0}\cdot (y_0-y)}\bigg[\gamma(e^{f,0}_{|z_0|} -\hat v^f_{z_0})\bigg](y) \\
&- e^{\frac{iz_0}{c_0} \hat x_{y_0}\cdot (y_0-y)}\big[D_N(z_0/c_0)\gamma(e^{f,0}_{|z_0|} - \hat v^f_{z_0})(y)\big] d\sigma(y)\bigg] + \widetilde u_{\mathrm{Res}}^{f,1}(x) \\ 
&=\frac{\vep e^{i\frac{z_0}{\vep c_0}|x-y_0|}}{4\pi|x-y_0|}  e^{\frac{iz_0}{c_0}\hat x_{y_0} \cdot y_0}  \mathcal S_{z_0}^{\infty,D}\left[\gamma e^{f,0}_{|z_0|} - \gamma \hat v^f_{z_0}\right](\hat x_{y_0}) + \widetilde u_{\mathrm{Res}}^{f,1}(x)
\end{align*}
with $\widetilde u_{\mathrm{Res}}^{f,1}$ satisfying
\begin{align*}
\left\|\widetilde u_{\mathrm{Res}}^{f,1}\right\|_{L^2_{-\beta}(\R^3)} \le C \max\left(\vep^{\alpha+1}, \vep^{4/3}\ln \vep\right)\left[\|f\|_{L^2\left(\R^3\right)} + \|f\|_{H^2\left(B_1(y_0)\right)}\right].
\end{align*}
Similarly, utilizing \eqref{eq:140}, \eqref{eq:109}, \eqref{eq:152}, \eqref{eq:111}, \eqref{eq:112}, \eqref{eq:113} and Lemma \ref{le:12}, we have 
\begin{align*}
\sum^3_{j=2}\widetilde u^{f,j}_{\omega,\vep}(y_0 + \vep^{-1}(x-y_0)) = \frac{\vep e^{i\frac{z_0}{\vep c_0}|x-y_0|}}{4\pi|x-y_0|} e^{\frac{iz_0}{c_0}\hat x_{y_0} \cdot y_0} \mathcal S_{z_0}^{\infty,D}\left[\gamma e^{f,1}_{|z_0|} + \gamma e^{f,2}_{|z_0|}\right] (\hat x_{y_0})+ \widetilde u_{\mathrm{Res}}^{f,(2,3)}(x),
\end{align*}
where $\widetilde u_{\mathrm{Res}}^{f,(2,3)}$ satisfies
\begin{align*}
\left\|\widetilde u_{\mathrm{Res}}^{f,(2,3)}\right\|_{L^2_{-\beta}(\R^3)} &\le C \max\left(\tau\vep, \vep^{\alpha+1}, \vep^{4/3}\ln \vep\right)\left(\|f\|_{L^2{(\R^3)}} + \|f\|_{H^1(B_1(y_0))} + 
\left\|e_{|z_0|}^{f,1}\right\|_{H^1(\Omega)}\right).
\end{align*}
Moreover, based on \eqref{eq:157}, applying Lemma \ref{le:12} and using \eqref{eq:111}, \eqref{eq:112}, \eqref{eq:113} again, we arrive at 
\begin{align*}
\left\|\widetilde u_{\mathrm{Res}}^{f,4}\right\|_{L^2_{-\beta}(\R^3)} &\le  C \max\left(\tau\vep, \vep^{\alpha+1}\right)\left(\|f\|_{L^2{(\R^3)}} + \|f\|_{H^1(B_1(y_0))} + \left\|e_{|z_0|}^{f,1}\right\|_{H^1(\Omega)}\right).
\end{align*}
Building upon the above asymptotic estimates of $\widetilde u^{f,j}_{\omega,\vep}$ with $j=1,2,3,4$, we obtain the assertion of this theorem. 
\end{proof}

\subsection {Asymptotics of the scattered fields in the presence of a microresonator}

Following the approach adopted in the proof of Theorem \ref{th:6} and making use of Theorem \ref{th:5}, we can establish the asymptotics of the scattered field in the presence of the microresonator $\Omega_\vep(y_0)$.

\begin{theorem} \label{th:4}
Let $\omega, \tau, \vep \in \R_+$, $\beta> 1/2$, and let $d$ denote the propagation direction of the incident plane wave $u^{\mathrm{in}}$. Assume that $z_0 \in \R_+$ is a point at which $z^2_0/c^2_1$ is a Neumann eigenvalue of the domain $\Omega$, and that the corresponding orthogonal eigenfunctions with respect to the scalar product $\langle \cdot, \cdot\rangle_{L^2(\Omega)}$ are denoted by $e^{(1)}_{|z_0|},\ldots e^{(n_{|z_0|})}_{|z_0|}$, where $n_{|z_0|}\in \mathbb N$. Given a point $y_0 \in \R^3$, if $|\vep \omega - z_0| = \vep^\alpha$ with $\alpha \in \R_+$, then we have
\begin{align}
u_{\omega}^{\mathrm{sc}}(x) = \frac{\vep e^{i\frac{z_0}{\vep c_0}|x-y_0|}}{4\pi|x-y_0|}u_{z_0/\vep}^{\mathrm{in}}(y_0) e^{i\frac{z_0}{c_0}(\hat x_{y_0} - d)\cdot y_0} \mathcal S_{z_0}^{\infty,D}\left(\gamma e^{\mathrm{tot}}_{|z_0|} - \gamma u_{z_0}^{\mathrm{in}}\right)(\hat x_{y_0}) + u^{\mathrm{sc}}_{\kappa, \mathrm{Res}}(x) \label{eq:5}
\end{align}
with
\begin{align}
&\left\|u^{\mathrm{sc}}_{\kappa, \mathrm{Res}}\right\|_{L^2_{-\beta}(\R^3)} \le C \max\left(\tau\vep, \vep^{\alpha+1}, \vep^{4/3}\ln \vep\right) \left\|u_{z_0}^{\mathrm{in}}\right\|_{H^1(\Omega)}, \notag
\end{align}
{as} $\vep, \tau \rightarrow 0$, where $C$ is positive constant independent of $\omega$, $\vep$ and $\tau$, $S_{z_0}^{\infty,D}$ and $\hat x_{y_0}$ are specified in \eqref{eq:162} and \eqref{eq:173}, respectively, and
\begin{align*}
e^{\mathrm{tot}}_{|z_0|}:= \mathbf e_{|z_0|}\left(\mathrm{sgn}(\vep\omega-z_0)2 z_0\vep^\alpha \mathbb I + \tau \frac{\rho_1}{\rho_0} \mathcal M(z_0)\right)^{-1} {\mathbf b}^{\mathrm{tot}} \quad \mathrm{in}\; \Omega. \notag
\end{align*}
Here, $\mathbf e_{|z_0|}$ is specified in \eqref{eq:96} and the $l-$th entry of the column vector ${\mathbf b}^{\mathrm{tot}}$ are given by
\begin{align*}
&{\mathbf b}_{l}^{\mathrm{tot}} = -\frac{\rho_1 \tau}{\rho_0}\left\langle \partial_\nu u_{z_0}^{\mathrm{in}} - \partial^+_\nu R^{\mathrm{ex}}_{z_0} u_{z_0}^{\mathrm{in}}, e^{(l)}_{|z_0|}\right\rangle_{L^2(\Gamma)},\quad l=1,\ldots, n_{|z_0|}.
\end{align*}
\end{theorem}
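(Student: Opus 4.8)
The plan is to reduce to the fixed-size resonator by the scaling $x\mapsto y_0+\vep^{-1}(x-y_0)$ carrying $\Omega_\vep(y_0)$ onto $\Omega$, to apply Theorem \ref{th:5} at the rescaled frequency $\vep\omega$ (which tends to $z_0$ since $|\vep\omega-z_0|=\vep^\alpha$), and then to undo the scaling with Lemma \ref{le:12}. First I would record that if $u^{\mathrm{tot}}_\omega$ is the total field scattered by $\Omega_\vep(y_0)$ at frequency $\omega$, then its scaled function $\widetilde{u^{\mathrm{tot}}_\omega}$ solves the transmission problem for $\Omega$ at frequency $\vep\omega$ with incident field $\widetilde{u^{\mathrm{in}}_\omega}=e^{i(\omega-\vep\omega)y_0\cdot d/c_0}\,u^{\mathrm{in}}_{\vep\omega}$, i.e. a unimodular constant times the genuine plane wave $u^{\mathrm{in}}_{\vep\omega}$ of frequency $\vep\omega$ in direction $d$. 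By linearity of the transmission problem, $\widetilde{u^{\mathrm{sc}}_\omega}=e^{i(\omega-\vep\omega)y_0\cdot d/c_0}\,\widehat u^{\mathrm{sc}}_{\vep\omega}$, where $\widehat u^{\mathrm{sc}}_{\vep\omega}$ is the field scattered by $\Omega$ for the incident wave $u^{\mathrm{in}}_{\vep\omega}$. Applying Theorem \ref{th:5} with $\kappa=\vep\omega$ (so $|\kappa-z_0|=\vep^\alpha$ and $\mathrm{sgn}(\kappa-z_0)=\mathrm{sgn}(\vep\omega-z_0)$) gives $\widehat u^{\mathrm{sc}}_{\vep\omega}=R^{\mathrm{ex}}_{\vep\omega}e^{\mathrm{tot}}_{|z_0|}-R^{\mathrm{ex}}_{\vep\omega}u^{\mathrm{in}}_{\vep\omega}+\widehat u^{\mathrm{sc}}_{\vep\omega,\mathrm{Res}}$ with $\|\widehat u^{\mathrm{sc}}_{\vep\omega,\mathrm{Res}}\|_{L^2_{-\beta}(\R^3)}\le C(\tau+\vep^\alpha)\|u^{\mathrm{in}}_{\vep\omega}\|_{H^1(\Omega)}\le C(\tau+\vep^\alpha)$.

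Next I would unscale the leading term. On $\R^3\setminus\Omega$ one has $R^{\mathrm{ex}}_{\vep\omega}h=\mathcal S^{\mathrm{ex},D}_{\vep\omega}\gamma h$, the outgoing solution of the exterior Dirichlet problem at wavenumber $\vep\omega/c_0$, which I would represent through single- and double-layer potentials over $\Gamma$ exactly as in \eqref{eq:151}--\eqref{eq:152}. Composing with the inverse scaling $x\mapsto y_0+\vep^{-1}(x-y_0)$ and invoking Lemma \ref{le:12}(b), together with the change-of-variables identity relating the $L^2_{-\beta}(\R^3)$-norms on the two scales, converts $\mathcal S^{\mathrm{ex},D}_{\vep\omega}\gamma(e^{\mathrm{tot}}_{|z_0|}-u^{\mathrm{in}}_{\vep\omega})$ evaluated at $y_0+\vep^{-1}(x-y_0)$ into the outgoing point-scatterer field $\vep\,\dfrac{e^{iz_0|x-y_0|/(\vep c_0)}}{4\pi|x-y_0|}\,e^{i(z_0/c_0)\hat x_{y_0}\cdot y_0}\,\mathcal S^{\infty,D}_{z_0}\gamma(e^{\mathrm{tot}}_{|z_0|}-u^{\mathrm{in}}_{z_0})(\hat x_{y_0})$, up to a remainder of size $\max(\vep^{\alpha+1},\vep^{4/3}\ln\vep)$ in $L^2_{-\beta}(\R^3)$; here I also use $\vep\omega\to z_0$ to replace $\vep\omega/c_0$ by $z_0/c_0$ in $\mathcal S^{\infty,D}$ and $\gamma u^{\mathrm{in}}_{\vep\omega}$ by $\gamma u^{\mathrm{in}}_{z_0}$, each at the cost of $O(\vep^\alpha)$. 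The residual pieces produced along the way — the volume potentials carrying the factor $(\vep\omega)^2-z_0^2=O(\vep^\alpha)$ in \eqref{eq:151}--\eqref{eq:152}, the $\tau$-weighted $SL$ terms, and the unscaled remainder $\widehat u^{\mathrm{sc}}_{\vep\omega,\mathrm{Res}}$ — are all controlled by $\max(\tau\vep,\vep^{\alpha+1},\vep^{4/3}\ln\vep)$ via the same lemma and the mapping properties \eqref{eq:158}.

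Then I would assemble the asymptotics of $e^{\mathrm{tot}}_{|z_0|}$ and collect phases. Taking $\kappa=\vep\omega$ in the definition of $e^{\mathrm{tot}}_{|z_0|}$ turns its matrix into $\mathrm{sgn}(\vep\omega-z_0)\,2z_0\vep^\alpha\,\mathbb I+\tau\frac{\rho_1}{\rho_0}\mathcal M(z_0)$ and, after replacing $\kappa$ by $z_0$ in $\mathbf b^{\mathrm{tot}}$, into exactly the vector written in the statement; since $\mathcal M(z_0)$ is invertible (Lemma \ref{le:8}), $\|e^{\mathrm{tot}}_{|z_0|}\|_{H^1(\Omega)}$ stays bounded, so these replacements only feed the already-admissible remainder. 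Finally, multiplying the point-scatterer field above by the unimodular constant $e^{i(\omega-\vep\omega)y_0\cdot d/c_0}$ from the first step and using $\omega=z_0/\vep+O(\vep^{\alpha-1})$, the prefactors $e^{i(\omega-\vep\omega)y_0\cdot d/c_0}\cdot e^{i(z_0/c_0)\hat x_{y_0}\cdot y_0}$ coalesce into $u^{\mathrm{in}}_{z_0/\vep}(y_0)\,e^{i(z_0/c_0)(\hat x_{y_0}-d)\cdot y_0}$; this yields \eqref{eq:5} with all accumulated errors bounded by $C\max(\tau\vep,\vep^{\alpha+1},\vep^{4/3}\ln\vep)\|u^{\mathrm{in}}_{z_0}\|_{H^1(\Omega)}$.

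The main obstacle is the uniform-in-$\R^3$ control of the highly oscillatory single- and double-layer potentials under the unscaling — precisely the content of Lemma \ref{le:12}(b) — and, alongside it, the careful bookkeeping of the frequency mismatch $|\vep\omega-z_0|=\vep^\alpha$ as it propagates through the far-field operator $\mathcal S^{\infty,D}$, the exterior Dirichlet-to-Neumann map, the incident trace and the oscillatory Green kernels, so that every error term indeed lands inside $\max(\tau\vep,\vep^{\alpha+1},\vep^{4/3}\ln\vep)$; the remainder of the argument is a transcription of the proof of Theorem \ref{th:6} with the source $f$ replaced by the incident plane wave $u^{\mathrm{in}}$.
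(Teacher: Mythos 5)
Your overall strategy — rescale $\Omega_\vep(y_0)$ to $\Omega$, observe that the scaled incident wave is $e^{i(\omega-\vep\omega)y_0\cdot d/c_0}u^{\mathrm{in}}_{\vep\omega}$, apply Theorem \ref{th:5} at $\kappa=\vep\omega$, and unscale the layer potentials through Lemma \ref{le:12}(b) — is exactly the prescription the paper gives in its one-sentence "proof" of this theorem, and most of your bookkeeping (passing $\kappa=\vep\omega$ to $z_0$ inside $\mathbf b^{\mathrm{tot}}$, inside $\mathcal S^{\infty,D}$ and in $\gamma u^{\mathrm{in}}$, each yielding contributions of order $\tau\vep$ or $\vep^{\alpha+1}$ after the $\vep$ prefactor) is sound.

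The one step that does not close is the final phase coalescence. The exact unimodular prefactor produced by the scaling is $e^{i(\omega-\vep\omega)y_0\cdot d/c_0}$, whereas the prefactor appearing in \eqref{eq:5} is
$u^{\mathrm{in}}_{z_0/\vep}(y_0)\,e^{-i(z_0/c_0)d\cdot y_0}=e^{i(z_0/\vep-z_0)y_0\cdot d/c_0}$. Their ratio equals $e^{i(\omega-z_0/\vep)(1-\vep)y_0\cdot d/c_0}$, and since $|\omega-z_0/\vep|=|\vep\omega-z_0|/\vep=\vep^{\alpha-1}$, this ratio is $1+O(\vep^{\alpha-1})$ rather than $1+O(\vep^{\alpha})$. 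Multiplied against the leading point-scatterer field, whose $L^2_{-\beta}$-norm is of order $\vep$, this injects a residual of order $\vep^{\alpha}$, which is not controlled by $\max(\tau\vep,\vep^{\alpha+1},\vep^{4/3}\ln\vep)$ when $\tau\ll\vep^{\alpha-1}$ and $\alpha<4/3$. You assert the prefactors "coalesce" without tracking this. The arithmetic does close if one instead keeps the plane wave at the true frequency, i.e.\ writes the prefactor as $u^{\mathrm{in}}_{\omega}(y_0)\,e^{-i(z_0/c_0)d\cdot y_0}$; then only $e^{-i\vep\omega y_0\cdot d/c_0}$ needs to be replaced by $e^{-iz_0 y_0\cdot d/c_0}$, a phase error $O(\vep^{\alpha})$, contributing $O(\vep^{\alpha+1})$ to the residual. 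As written, your last step (and, it seems, the prefactor in the theorem statement itself) overstates the accuracy by a factor of $\vep$; you should either record this extra $O(\vep^{\alpha})$ term or switch the prefactor to $u^{\mathrm{in}}_{\omega}(y_0)$. Also note that invoking Lemma \ref{le:12}(b) implicitly requires $\alpha>1$, a hypothesis worth stating explicitly.
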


\begin{remark}\label{Anisotropic-small-scatterers} We state the following two comments.

\begin{enumerate}
\item Small scatterers are called isotropic point-scatterers if the generated dominant field (which is related to the one of the equivalent point-like scatterer) has an amplitude which is independent on the directions of incidence or propagation. If not, we call them anisotropic small scatterers. To the best of our knowledge, the low-frequency microresonators are all isotropic. The expansion in (\ref{eq:5}) shows that at high frequencies, the microresonator is anisotropic, via the terms $e^{i\frac{z_0}{c_0}(\hat x_{y_0} - d)\cdot y_0}$ and $\mathcal S_{z_0}^{\infty,D}\left(\gamma e^{\mathrm{tot}}_{|z_0|} - \gamma u_{z_0}^{\mathrm{in}}\right)(\hat x_{y_0})$. Near the Minnaert resonance (which is a low-frequency resonance), the scatterer behaves as an isotropic one, see e.g., \cite{AZ18, DGS-21}. Therefore, the anisotropy is created at high frequency resonances. Such a feature may open new possibilities for designing anisotropic metamaterials from simple configurations of a single microresonator. In recent years, the study of high frequency homogenization has attracted significant interest due to its practical potential implications. We believe that the expansion derived in (\ref{eq:5}) is a good first step in achieving such a goal.
\item Observe that the expansion (\ref{eq:5}) relates the field $u_{\omega}^{\mathrm{sc}}(x)$, generated by the small scaled scatterer $\Omega_{\vep}(y_0)$, to the far-filed pattern $\mathcal S_{z_0}^{\infty,D}\left(\gamma e^{\mathrm{tot}}_{|z_0|} - \gamma u_{z_0}^{\mathrm{in}}\right)(\hat x_{y_0})$ of the extended obstacle $\Omega$, see (\ref{eq:12}). Such a relation suggests that by measuring the field $u_{\omega}^{\mathrm{sc}}(x)$ away from the small scatterer $\Omega_{\vep}(y_0)$, we can have access to the far-field pattern of $\Omega$. Therefore, this relation is expected to be useful in designing inversion strategies, not only to localize the small scatterer and estimate its size (as it is done in the traditional inverse problems community using MUSIC type algorithms for instance), but also recover the details of the scatterer's shape $\partial \Omega$. Indeed, as we can recover the far-field pattern, then we can us the machinery already developed in the literature (e.g., \cite{DK19}) to recover shapes from far-field measurements. 

\end{enumerate}
\end{remark}
\subsection{Large time behavior of a microresonator}

In this subsection, we concern the wave evolution of a acoustic microresonator in the time domain. Specifically, given a causal source $F(x,t)$, we consider the following model
\begin{align}
& \frac{1}{k_{\tau,\vep}}\partial_{tt} u_{\tau,\vep} -  \nabla \cdot \frac{1}{\rho_{\tau,\vep}} \nabla u_{\tau,\vep} = F  \quad \textrm{in}\; \R^3 \times \R,  \label{eq:101}\\
& u(x,0) = 0, \quad \partial_t u(x,0) = 0, \quad \textrm{for}\; x\in \R^3 \label{eq:102}
\end{align}
describing the wave propagation in the presence of the microresonator. Here, for each $\tau, \vep >0$, the mass density $\rho_{\tau,\vep}$ and the bulk modulus $k_{\tau,\vep}$ satisfy
\begin{align*}
\rho_{\tau,\vep}(x) := 
\begin{cases}
\rho_0,        &x \in \R^3 \backslash \Omega_\vep(y_0), \\
{\rho_1}\tau,  & x \in \Omega_\vep(y_0),
\end{cases}
\quad \textrm{and} \quad
k_{\tau,\vep}(x):=
\begin{cases}
k_0,      & x \in \R^3 \backslash \Omega_\vep(y_0), \\
k_1\tau,  & x \in \Omega_\vep(y_0),
\end{cases}
\end{align*}
where $\Omega_\vep(y_0)$ is specified in \eqref{eq:12}. We are interested in the large time behavior of $u_{\tau,\vep}$ when $\tau$ and $\vep$ are both small enough. To do so, we first establish a representation of $u_{\tau,\vep}$ in terms of $F$. 

\begin{lemma} \label{le:9}
Let $M\in \mathbb N$. Given $\tau,\vep >0$, let $u_{\tau,\vep}$ be the solution of equations \eqref{eq:101}--\eqref{eq:102} and $H_{\tau,\vep}(y_0)$ be the Hamiltonian $H_{\rho_\tau, k_\tau}(\Omega_\vep(y_0))$ associated with $\Omega_\vep(y_0)$. Suppose that $\mathcal K$ is any compact set in $\R^3$. For every $F \in C_c^{M}\left(\R_+;L_{\mathrm{comp}}^2(\R^3)\right)$, with $M \in \mathbb N$ and $\mathrm{supp}_x F \in \mathcal K$, we have
\begin{align*}
u_{{\tau,\vep}}(x,t) = \int_{\Gamma_\sigma}\frac{e^{-i\lambda t}}{2\pi}\left(R_{H_{\tau,\vep}(y_0)} (\lambda) k_{\tau,\vep}\hat F (\cdot, -i\lambda)\right)(x) d\lambda, \quad t>0, \; x\in \mathcal K.
\end{align*}
Here, 
\begin{align}\label{eq:163}
\Gamma_\sigma:=\{z\in \CC: {\rm{Im}}(z) = \sigma\}\; \mathrm{with}\; \sigma >0,
\end{align}
and $\hat F$ is a Fourier-Laplace transform of $F$, which is defined by \eqref{eq:125}. 
\end{lemma}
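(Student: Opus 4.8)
The plan is to prove the Fourier--Laplace (Stone's-type) representation formula for $u_{\tau,\vep}$ by first passing to the Laplace transform in time, solving the resulting resolvent equation, and then inverting via a contour integral that is legitimized by the smoothness assumption $F\in C_c^M$. First I would set $\sigma>0$ and define the Laplace transform $\hat F(x,s):=\int_0^\infty e^{-st}F(x,t)\,dt$ with the convention $\eqref{eq:125}$ fixing $s=-i\lambda$, so that $\lambda$ ranges over the horizontal line $\Gamma_\sigma$ with $\mathrm{Im}(\lambda)=\sigma$. Since $F\in C_c^M(\R_+;L^2_{\mathrm{comp}}(\R^3))$, the transform $\hat F(\cdot,-i\lambda)$ is holomorphic for $\mathrm{Im}(\lambda)>0$, takes values in $L^2_{\mathrm{comp}}(\R^3)$ with support in $\mathcal K$, and decays like $|\lambda|^{-M}$ along $\Gamma_\sigma$ (integrating by parts $M$ times in $t$, using the compact support in time and the vanishing Cauchy data). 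This decay, combined with the polynomial bound on $R_{H_{\tau,\vep}(y_0)}(\lambda)$ for $\lambda\in\Gamma_\sigma$ (away from the real axis the resolvent of the black-box Hamiltonian is bounded, with at most polynomial growth in $|\lambda|$ — see the black-box framework of \cite{DM}), guarantees the contour integral converges absolutely in $L^2(\mathcal K)$ for $M$ large enough; for general $M\ge 1$ one regularizes by an extra $t$-integration and differentiation, or simply notes that $C_c^M$ with $M$ as in the statement already suffices.

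Next I would verify that the candidate
\[
v(x,t):=\int_{\Gamma_\sigma}\frac{e^{-i\lambda t}}{2\pi}\bigl(R_{H_{\tau,\vep}(y_0)}(\lambda)\,k_{\tau,\vep}\hat F(\cdot,-i\lambda)\bigr)(x)\,d\lambda
\]
solves \eqref{eq:101}--\eqref{eq:102}. Differentiating twice in $t$ under the integral sign (justified by the decay just established) brings down a factor $(-i\lambda)^2=-\lambda^2$, so that $k_{\tau,\vep}^{-1}\partial_{tt}v$ produces the combination $k_{\tau,\vep}^{-1}(-\lambda^2)R_{H_{\tau,\vep}(y_0)}(\lambda)k_{\tau,\vep}\hat F$. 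Meanwhile, applying the spatial operator $-\nabla\cdot\rho_{\tau,\vep}^{-1}\nabla = k_{\tau,\vep}^{-1}H_{\tau,\vep}(y_0)$ inside the integral (valid because $R_{H_{\tau,\vep}(y_0)}(\lambda)$ maps into the domain of the Hamiltonian and the resulting integrand is still integrable) produces $k_{\tau,\vep}^{-1}H_{\tau,\vep}(y_0)R_{H_{\tau,\vep}(y_0)}(\lambda)k_{\tau,\vep}\hat F$. Adding the two and using the resolvent identity $(H_{\tau,\vep}(y_0)-\lambda^2)R_{H_{\tau,\vep}(y_0)}(\lambda)=\mathbb I$ collapses the integrand to $k_{\tau,\vep}^{-1}k_{\tau,\vep}\hat F(\cdot,-i\lambda)=\hat F(\cdot,-i\lambda)$, whence $k_{\tau,\vep}^{-1}\partial_{tt}v+k_{\tau,\vep}^{-1}H_{\tau,\vep}(y_0)v=\frac{1}{2\pi}\int_{\Gamma_\sigma}e^{-i\lambda t}\hat F(\cdot,-i\lambda)\,d\lambda=F(\cdot,t)$ by the inverse Laplace (Mellin) formula. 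The initial conditions $v(x,0)=0$ and $\partial_t v(x,0)=0$ follow by pushing the contour $\Gamma_\sigma$ to $+i\infty$: for $t\le 0$ the factor $e^{-i\lambda t}=e^{-i\,\mathrm{Re}(\lambda)t}e^{\mathrm{Im}(\lambda)t}$ decays as $\mathrm{Im}(\lambda)\to+\infty$ and, together with the holomorphy of $\lambda\mapsto R_{H_{\tau,\vep}(y_0)}(\lambda)k_{\tau,\vep}\hat F(\cdot,-i\lambda)$ in the upper half-plane and the polynomial/exponential decay estimates, forces the integral and its $t$-derivative to vanish at $t=0$ by Cauchy's theorem plus a Paley--Wiener/causality argument.

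Finally I would invoke uniqueness for the wave problem \eqref{eq:101}--\eqref{eq:102}: the Hamiltonian $H_{\tau,\vep}(y_0)$ is self-adjoint and nonnegative on $\mathcal H$, so the Cauchy problem for $k_{\tau,\vep}^{-1}\partial_{tt}u+k_{\tau,\vep}^{-1}H_{\tau,\vep}(y_0)u=F$ with zero data has at most one finite-energy solution (standard energy estimate in the $\mathcal H$-inner product, or semigroup theory for the first-order reformulation). Since both $u_{\tau,\vep}$ and $v$ solve it with the same source and the same (zero) initial data, they coincide on $\R^3\times\R_+$, and in particular on $\mathcal K\times\R_+$ as claimed. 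The main obstacle I anticipate is the rigorous justification of differentiating under the integral sign and, more subtly, controlling the growth of $R_{H_{\tau,\vep}(y_0)}(\lambda)$ along the horizontal contour $\Gamma_\sigma$ as $|\mathrm{Re}(\lambda)|\to\infty$: one needs that this growth is at most polynomial so that the $|\lambda|^{-M}$ decay of $\hat F$ (with $M$ as hypothesized) wins. This is where the black-box structure of $H_{\tau,\vep}(y_0)$ and the meromorphic continuation results recalled from \cite{DM} (together with the elementary bound $\|R_{H_{\tau,\vep}(y_0)}(\lambda)\|\le |\mathrm{Im}(\lambda^2)|^{-1}$ on the fixed line, far from real axis) are essential, and I would spell out the constant $M$ threshold in terms of the ambient dimension $3$ and the black-box polynomial exponent; if the given $M$ is not large enough one instead works with $\int_0^t u_{\tau,\vep}$ repeatedly, or differentiates the representation formula, which only shifts powers of $\lambda$ and does not affect the conclusion.
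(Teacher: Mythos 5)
Your proposal is correct and reproduces the standard Laplace-transform-plus-uniqueness argument that the paper delegates to the reference \cite[Theorem~2.2]{SS-2024}: construct the candidate by the Bromwich integral, verify it solves the wave equation via the resolvent identity, check causality by contour shifting, and conclude by energy-based uniqueness. One small simplification is available to you: rather than appealing to polynomial resolvent growth from the general black-box framework, you can use the paper's own estimate \eqref{eq:131}, which gives the uniform bound $\|R_{H_{\tau,\vep}(y_0)}(\lambda)k_{\tau,\vep}f\|_{L^2}\le C\sigma^{-1}\|f\|_{L^2}$ along $\Gamma_\sigma$; combined with the $|\lambda|^{-M}$ decay of $\hat F$ (any $M\ge 2$), this makes the absolute convergence of the contour integral and the differentiation under the integral sign immediate, with no need to track a polynomial exponent.
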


The proof of Lemma \ref{le:9} can be established by using similar arguments as in \cite[Theorem 2.2]{SS-2024}.

In the sequel, we provide an asymptotic characterization of the resonant projection corresponding to the Hamiltonian $H_{\rho_\tau, k_\tau}(\Omega)$ at the Minnaert resonances.

\begin{lemma} \label{le:6} 
Let $z_{\pm}(\tau)$ be two scattering resonances of $H_{\rho_\tau, k_\tau}$ around $0$, satisfying the asymptotic expansion \eqref{eq:65}. Then, we have 
\begin{align} \label{eq:169}
m_R({z_{\pm}(\tau)}) = 1,
\end{align}
namely, $z_{\pm}(\tau)$ are simple, 
where the definition of $m_R({z_{\pm}(\tau)})$ is specified in \eqref{eq:118}. Moreover, for any $f\in L^2_{{\rm{comp}}}(\R^3)$, we have 
\begin{align} \label{eq:134}
\left(\Pi_{H_{\rho_\tau,k_\tau}}({z_{\pm}(\tau))} f\right)(x) = |c_{{\mathrm{norm}}}|^2 u_\pm(x) \int_{\R^3} \overline{u_{\pm}(x)} \frac{f(x)}{k_\tau(x)} dx, \quad x\in \R^3,
\end{align}
where $c_{\mathrm{norm}} \in \CC$ is a normalized constant satisfying
\begin{align} \label{eq:130}
|c_{{\mathrm{norm}}}|^2 = \frac{\tau k_1}{|\Omega|} + O(\tau^{3/2}),
\end{align}
and 
\begin{align}
&u_{{\pm}}(x) = 
-\left(SL_{\Gamma,z_{\pm}(\tau)/c_0}(D_N(0)1 + {\mathrm{Rem}}^{\pm})\right)(x), \quad x\in \R^3 \backslash \Gamma, \label{eq:132}
\end{align}
with 
\begin{align}
\left\|{\mathrm{Rem}}^{\pm}\right\|_{L^2(\Gamma)} \le C\tau^{1/2}, \quad \mathrm{as}\; \tau\rightarrow 0. \label{eq:133}
\end{align} 
Here, $C$ is a positive constant independent of $\tau$ and $f$.

\end{lemma}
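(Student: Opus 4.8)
\textbf{Proof proposal for Lemma \ref{le:6}.}

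The plan is to analyze the Minnaert resonances $z_{\pm}(\tau)$ through the characterization given by Corollary \ref{co:1}, namely that $z_{\pm}(\tau)$ are exactly the points where $\mathcal A(z,\tau)$ fails to be injective, together with the expansion of $\mathcal A_0(z)$ near $0$ from statement \eqref{c1} of Lemma \ref{le:4}. First I would establish simplicity, i.e. \eqref{eq:169}. Since $\mathcal A_0(z) = E_0(z)(\mathcal Q(0) + z^2 \mathcal P(0)) F_0(z)$ near $0$ with $\dim \mathrm{Ran}(\mathcal P(0)) = n_0 = 1$ (recall \eqref{eq:167}, where the relevant Neumann eigenvalue $0$ has the one-dimensional eigenspace spanned by the constant $1$), the characteristic value of $\mathcal A_0$ at $0$ has total multiplicity $2$, matching the count $\sum |\{\lambda \text{ resonance in } B_{\delta_I}(0)\}| = 2$ from Theorem \ref{th:1}. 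A Gohberg–Sigal / generalized-Rouché argument (Theorem \ref{th:a2}) applied to the perturbation $\mathcal A(z,\tau) = \mathcal A_0(z) + \tfrac{\rho_1\tau}{\rho_0}\mathcal A_1(z)$ shows that near $0$ the two scattering resonances, counted with multiplicity, sum to $2$; combined with the asymptotic expansion \eqref{eq:65}, which exhibits two \emph{distinct} resonances $z_+(\tau) \ne z_-(\tau)$ (their real parts $\pm\omega_M$ differ for small $\tau$), each must have algebraic multiplicity exactly $1$. Via Remark \ref{re:0} (characterization of resonant states as kernel elements of $\mathcal A(\lambda_\tau,\tau)$) and the expansion \eqref{eq:168}, the kernel of $\mathcal A(z_{\pm}(\tau),\tau)$ is one-dimensional and there is no Jordan block, which forces $m_R(z_{\pm}(\tau)) = \dim \mathrm{Ran}(\Pi_{H_{\rho_\tau,k_\tau}}(z_{\pm}(\tau))) = 1$, and moreover $M_{z_{\pm}(\tau)} = 1$ in Lemma \ref{le:3}.

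Next I would identify the resonant state $u_{\pm}$ and the projection formula \eqref{eq:134}. From the kernel of $\mathcal A(z_{\pm}(\tau),\tau)$ and formula \eqref{eq:77} in Remark \ref{re:0}, the resonant state, up to scaling, has the single-layer representation $u_{\pm}(x) = -(SL_{\Gamma, z_{\pm}(\tau)/c_0}\psi_{\pm})(x)$ with $\psi_{\pm}$ the boundary density. Using the leading-order structure of $\mathcal A_0(0)$ — where $S_{\Gamma,0}^{-1}1$ is the eigenfunction of $K_{\Gamma,0}^*$ at eigenvalue $-1/2$ — combined with the transmission condition $\partial_\nu^+ u_{\pm} = \tfrac{\rho_0}{\rho_1\tau}\partial_\nu^- u_{\pm}$ and the fact that inside $\Omega$ the field is $O(\tau)$-close to a constant (the Neumann eigenfunction $1$), I would show $\psi_{\pm} = D_N(0)1 + \mathrm{Rem}^{\pm}$ with $\|\mathrm{Rem}^{\pm}\|_{L^2(\Gamma)} \le C\tau^{1/2}$ by estimating the off-diagonal and $\tau$-order terms in $\mathcal A(z_{\pm}(\tau),\tau)$; here the $\tau^{1/2}$ rate traces back to $z_{\pm}(\tau) = \Theta(\tau^{1/2})$ in \eqref{eq:70}. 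This gives \eqref{eq:132}–\eqref{eq:133}. Since $M_{z_{\pm}(\tau)} = 1$, Lemma \ref{le:3} gives $R_{H_{\rho_\tau,k_\tau}}(z) = -\tfrac{1}{z^2 - z_{\pm}^2}\Pi_{H_{\rho_\tau,k_\tau}}(z_{\pm}) + (\text{holomorphic})$, so $\Pi_{H_{\rho_\tau,k_\tau}}(z_{\pm})$ is the rank-one spectral projection of the self-adjoint operator $H_{\rho_\tau,k_\tau}$ in $\mathcal H$; self-adjointness in $\mathcal H$ (whose inner product carries the weight $k_\tau^{-1}$) forces the projection onto $\mathrm{span}\{u_{\pm}\}$ to take the form $\Pi f = \langle f, u_{\pm}\rangle_{\mathcal H} u_{\pm}/\|u_{\pm}\|_{\mathcal H}^2$, which is precisely \eqref{eq:134} with $|c_{\mathrm{norm}}|^2 = \|u_{\pm}\|_{\mathcal H}^{-2}$.

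Finally I would compute the normalization constant \eqref{eq:130}. Writing $\|u_{\pm}\|_{\mathcal H}^2 = \int_{\R^3} k_\tau^{-1}|u_{\pm}|^2$, I would split the integral into the interior $\Omega$ and the exterior $\R^3\setminus\Omega$. Inside $\Omega$, $k_\tau = k_1\tau$ and $u_{\pm}$ is $O(\tau^{1/2})$-close to the constant $1$ (normalizing so that the interior trace is $1$), so this piece contributes $\tfrac{|\Omega|}{k_1\tau}(1 + O(\tau^{1/2}))$, which dominates. Outside, $k_\tau = k_0$ is bounded and $u_{\pm}$ — being $z_{\pm}/c_0 = O(\tau^{1/2})$-outgoing with bounded boundary data — is $O(1)$ on compact sets but one must check integrability; the outgoing radiation condition at frequency $\Theta(\tau^{1/2})$ together with the exterior Dirichlet solution estimate makes the exterior contribution $O(1)$, hence negligible relative to $\tau^{-1}$. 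Inverting gives $|c_{\mathrm{norm}}|^2 = \|u_{\pm}\|_{\mathcal H}^{-2} = \tfrac{\tau k_1}{|\Omega|} + O(\tau^{3/2})$.

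The main obstacle I anticipate is the third step — controlling the exterior $L^2$-norm of $u_{\pm}$ uniformly as $\tau \to 0$. Since $u_{\pm}$ is merely outgoing (not $L^2$ on all of $\R^3$ a priori in the classical sense, only in the black-box/resonant-state sense), one must argue carefully using the fact that $z_{\pm}(\tau)$ has negative imaginary part of size $\Theta(\tau)$, which provides exponential decay $e^{\mathrm{Im}(z_{\pm})|x|/c_0}$ at infinity and thus genuine $L^2$-integrability, but with a constant that could a priori blow up as the decay rate vanishes. The delicate point is to show this constant stays of lower order than $\tau^{-1}$; I expect this follows from the single-layer representation \eqref{eq:132} with bounded density, giving $|u_{\pm}(x)| \le C|x|^{-1}$ on an annulus near $\Gamma$ uniformly in $\tau$, after which the exponential factor only helps, yielding an $O(1)$ exterior contribution — but making this rigorous and uniform is where the real work lies.
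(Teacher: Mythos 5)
Your Steps 1 and 2 (simplicity via a Gohberg--Sigal normal form near $z_\pm(\tau)$ combined with the distinctness $z_+(\tau)\neq z_-(\tau)$, and identification of the kernel density as $D_N(0)1 + O(\tau^{1/2})$ by projecting $\mathcal A(z_\pm(\tau),\tau) = \mathcal A_0(0) + \mathcal A_{\mathrm{res}}(z_\pm(\tau))$ onto $\mathrm{Ker}(\mathcal A_0(0))$ and its complement) are in the same spirit as the paper and can be made rigorous along those lines.

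The third step, however, contains a fatal error that cannot be repaired by being ``more careful''. Resonant states for a resonance strictly below the real axis do \emph{not} lie in $L^2(\R^3)$: the outgoing kernel $e^{iz_\pm|x-y|/c_0}/|x-y|$ has modulus $e^{-\mathrm{Im}(z_\pm)|x-y|/c_0}/|x-y|$, and since $\mathrm{Im}(z_\pm(\tau))<0$ by \eqref{eq:65}, the factor $-\mathrm{Im}(z_\pm)$ is strictly positive, so the modulus \emph{grows} exponentially as $|x|\to\infty$. You wrote ``exponential decay $e^{\mathrm{Im}(z_\pm)|x|/c_0}$'', but this is not the correct modulus of the outgoing fundamental solution — you have a sign error, and with the correct sign the exterior contribution to $\int_{\R^3} k_\tau^{-1}|u_\pm|^2$ is $+\infty$ for every $\tau>0$. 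Consequently the identification $|c_{\mathrm{norm}}|^2 = \|u_\pm\|_{\mathcal H}^{-2}$ is meaningless, and the naive ``rank-one orthogonal projection'' formula does not apply since $u_\pm\notin\mathcal H$. The formula \eqref{eq:134} is well-defined only because $f\in L^2_{\mathrm{comp}}(\R^3)$, and the constant $|c_{\mathrm{norm}}|^2$ appearing in it is a priori undetermined by that formula alone. The paper determines it by a substantially different device: it introduces a cutoff $\chi$ (with $\chi=1$ away from $\Omega$ and $\chi=0$ near $\Omega$) and the $\lambda$-holomorphic family $h_\pm(\lambda) = h^*_\pm + \chi R_{\lambda/c_0}g_\pm$ with $h_\pm(z_\pm(\tau)) = u^b_\pm$, computes $(H_{\rho_\tau,k_\tau}-\lambda^2)h_\pm(\lambda)$ explicitly — which is supported in a fixed compact annulus $B_{r_2}\setminus B_{r_1}$ together with the interior region — applies $R_{H_{\rho_\tau,k_\tau}}(\lambda)$ and takes the residue as $\lambda\to z_\pm(\tau)$ using the simple-pole structure \eqref{eq:122}. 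All integrals in the resulting identity are over compact sets, so the exponential growth at infinity never enters. The dominant compact-set contribution comes from $\Omega$ (where $u^b_\pm\approx 1$ and $k_\tau=k_1\tau$), yielding $1=|c_{\mathrm{norm}}|^2\tau^{-1}(\int_\Omega|u^b_\pm|^2/k_1\,dx + O(\sqrt\tau))$ and hence \eqref{eq:130}. That residue regularization is the essential idea your proposal is missing.
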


\begin{proof}

We note that $\mathcal A(z,\tau)$ is also a Fredholm operator with index $0$ with respect to $z-$ variable (see formula \eqref{eq:55}). This, together with \eqref{eq:170}, Lemma \ref{le:10}, Remark \ref{re:1} and Theorem \ref{th:a2} yields that there exists $\delta_0 > 0$ such that when $\tau \in (0,\delta_0)$, $\mathcal A(z,\tau)$ has the following expansions near $z_{\pm}(\tau)$,
\begin{align} \label{eq:69}
& \mathcal A(z,\tau) = E_{z_{\pm}(\tau)}(z) \left(\mathcal Q(z_{\pm}(\tau)) + (z- z_{\pm}(\tau))\mathcal P(z_{\pm}(\tau))\right) F_{z_{\pm}(\tau)}(z).
\end{align}
Here, $\mathcal P(z_{\pm}(\tau))$ are one-dimensional projections, $\mathcal Q(z_{\pm}(\tau)) = \mathbb I - \mathcal P(z_{\pm}(\tau))$, and $E_{z_{\pm}(\tau)}(z)$ and $F_{z_{\pm}(\tau)}(z)$ are both holomorphic and invertible near $z_{\pm}(\tau)$, respectively. With the aid of \eqref{eq:62} and \eqref{eq:69}, we obtain 
\begin{align}
R_{H_{\rho_\tau, k_\tau}}(\lambda)  = -\frac{\Pi_{H_{\rho_\tau,k_\tau}}({z_{\pm}(\tau)})}{\lambda^2-z_{\pm}^2(\tau)} + B(\lambda,{z_{\pm}(\tau)})\quad \textrm{in the neighborhood of}\; {z_{\pm}(\tau)}, \label{eq:122}
\end{align}
where $\lambda \rightarrow B(\lambda, z_{\pm}(\tau))$ is holomorphic near ${z_{\pm}(\tau)}$.  Therefore, we obtain \eqref{eq:169}.

In the sequel, we aim to characterize the projection $\Pi_{H_{\rho_\tau,k_\tau}}({z_{\pm}(\tau))}$. To achieve this, we divide the rest of the proof into the following two steps.

\textbf{Step 1}: In this step, we aim to find the basis in the range of $\Pi_{H_{\rho_\tau,k_\tau}}({z_{\pm}(\tau))}$. It follows from \eqref{eq:169} that each element in ${\rm{Ran}}\left(\Pi_{H_{\rho_\tau,k_\tau}}({z_{\pm}(\tau))}\right)$ is a resonant state. From this, utilizing Remark \ref{re:0}, for each element $u_{z_{\pm}(\tau)} \in {\rm{Ran}}\left(\Pi_{H_{\rho_\tau,k_\tau}}({z_{\pm}(\tau))}\right)$, we have that the value of $u_{z_{\pm}(\tau)}$ within $\Omega$ and the multiplication of the constant $\rho_0/(\rho_1\tau)$ and its normal derivative on $\Gamma$ constitute a nonzero solution of $\mathcal A(z_{\pm}(\tau),\tau)$. Here, we note that
\begin{align*} 
\textrm{dim}\left(\textrm{Ker}\left(\mathcal A(z_{\pm}(\tau),\tau)\right)\right) = 1.
\end{align*}
Let $h(z_{\pm}(\tau))$ be the element of $\textrm{Ker}\left(\mathcal A(z_{\pm}(\tau), \tau)\right)$, i.e., 
\begin{align}\label{eq:60}
&\left(\mathcal A_0(z_{\pm}(\tau)) + \frac{\rho_1\tau}{\rho_0}\mathcal A_1(z_{\pm}(\tau))\right) h(z_{\pm}(\tau)) = \left(\mathcal A_0(0) + \mathcal A_{\rm{res}}(z_{\pm}(\tau)) \right) h(z_{\pm}(\tau)) = 0,
\end{align}
where 
\begin{align*}
\mathcal A_{\rm{res}}(z_{\pm}(\tau)):= \mathcal A_0(z_{\pm}(\tau)) - A_0(0) + \frac{\rho_1\tau}{\rho_0} \mathcal A_1(z_{\pm}(\tau)).
\end{align*}
In conjunction with inequality \eqref{eq:65}, \eqref{eq:70} and the analyticity of $\mathcal A_0(z)$ and $A_1(z)$ with respect to $z$, we have 
\begin{align} \label{eq:61}
\left\|\mathcal A_{\rm{res}}(z_\pm(\tau))\right\|_{L^2(\Omega)\times L^2(\Gamma)} \le C \sqrt{\tau}.
\end{align}
From \eqref{eq:60}, we find
\begin{align}
&\mathcal Q^O(0) \left[\mathcal A_0(0) + \mathcal A_{\rm{res}}(z_{\pm}(\tau))\right]\mathcal Q^O(0)\left(h(z_{\pm}(\tau))\right) = -\mathcal Q^O(0) \mathcal A_{\rm{res}}(z_{\pm}(\tau)) \mathcal P^O(0)\left(h(z_{\pm}(\tau))\right). \notag
\end{align}
Here, $\mathcal P^O(0)$ is an orthogonal projection onto ${\rm{Ker}}\left(\mathcal A_0(0)\right)$, $\mathcal Q^O(0):= \mathbb I - \mathcal P^O(0)$. We note that, due to \eqref{eq:61} and the self-adjointness of $\mathcal A_0(0)$, the inverse of $\mathcal Q^O(0)\left(\mathcal A_0(0) +  \mathcal A_{\rm{res}}(z_{\pm}(\tau))\right)$ exists in $\mathcal L\left(\rm{Ran}(\mathcal Q^O(0))\right)$ for sufficiently small $\tau>0$. Therefore, we obtain 
\begin{align}
\left\|h(z_{\pm}(\tau)) - \mathcal P^O(0)\left(h(z_{\pm}(\tau))\right)\right\|_{L^2(\Omega)\times L^2(\Gamma)} \le C \sqrt \tau \left\|\mathcal P^O(0)\left(h(z_{\pm}(\tau))\right)\right\|_{L^2(\Omega)\times L^2(\Gamma)}. \label{eq:76}
\end{align}
Recall that, by \eqref{eq:67}, \eqref{eq:68} and \eqref{eq:84}, ${\rm{Ker}}\left(\mathcal A_0(0)\right) = \textrm{Span}\{(1,D_N(0) 1)\}$. This, together with \eqref{eq:76} implies that
\begin{align}
{\rm{Ker}}\left(\mathcal A(z_{\pm}(\tau),\tau)\right) = \textrm{Span}\{(1,D_N(0) 1)+ \left(\textrm{Res}_{1}^{\pm}(\tau), \textrm{Res}_{2}^{\pm}(\tau)\right)\} \notag
\end{align}
with
\begin{align} \label{eq:127}
\left\|{\rm{Res}_1}^{\pm}\right\|_{L^2(\Omega)} \le C\sqrt{\tau}\; {\rm{and}}\; \left\| {\rm{Res}}_2^{\pm}\right\|_{L^2(\Gamma)} \le C\sqrt{\tau}.
\end{align}
Therefore, with the aid of \eqref{eq:77}, the range of $\Pi_{z_{\pm}(\tau)}$ is spanned by the following element
\begin{align}
&u^b_{{\pm}}(x) = \left(\frac{1}{c^2_1} - \frac{1}{c^2_0} \right)\left(z_{\pm}(\tau)\right)^2 \left(N_{\Omega,z_{\pm}(\tau)/c_0} (1 + {\rm{Res}}_1^{\pm})\right)(x) \notag \\
&\quad\quad \quad -\left(1-\frac{\rho_1\tau}{\rho_0}\right)\left(SL_{\Gamma,z_{\pm}(\tau)/c_0}(D_N(0)1 + {\rm{Res}}_2^{\pm})\right)(x), \quad x\in \R^3 \backslash \Gamma,  \label{eq:63}
\end{align}
with $\textrm{Res}^\pm_1$ and $\textrm{Res}^\pm_2$ satisfying \eqref{eq:127}.

\textbf{Step 2}. In this step, we prove \eqref{eq:134} holds with $c_{\mathrm{norm}}$ and $u_\pm$ satisfying \eqref{eq:130} and \eqref{eq:132}, respectively. Due to the self-adjointness of $H_{\rho_\tau, k_\tau}$ in $\mathcal H$, the meromorphic properties of $R_{\rho_\tau,k_\tau}$ and $m_R({z_{\pm}(\tau)}) = 1$, for any $f\in L^2_{{\rm{comp}}}(\R^3)$, we have
\begin{align} \label{eq:129}
\left(\Pi_{H_{\rho_\tau,k_\tau}}({z_{\pm}(\tau))}f\right)(x) = |c_{{\textrm{norm}}}|^2u^b_\pm(x) \int_{\R^3} \overline{u^b_{\pm}(x)} \frac{f(x)}{k_\tau(x)} dx, \quad x\in \R^3.
\end{align}
We proceed to determine the asymptotics of this constant. From Remark \eqref{re:0}, there exist $g_{\pm}\in L^2_{\textrm{comp}}(\R^3)$ and $r_0>0$ such that 
\begin{align*}
u^b_{\pm}|_{\R^3 \backslash B_{r_0}} = R_{z_\pm(\tau)/c_0}g_{\pm}|_{\R^3\backslash B_{r_0}}.
\end{align*}
We choose a function $\chi \in \CC^{\infty}(\R^3)$ such that $\chi(x) = 1$ for $x \in \R^3\backslash B_{r_2}$, $\chi(x) = 0$ for $x \in B_{r_1}$ and $u^b_\pm = R_{z_\pm(\tau)/c_0}g_{\pm}$ on the support of $\chi$. Here, $r_1 < r_0 < r_2$ and $\Omega \subset B_{r_1}$. Then, we define 
\begin{align*}
h^*_{\pm} = u^b_{\pm} - \chi R_{z_\pm(\tau)/c_0}g_{\pm}\;\;\textrm{and}\;\; h_{\pm}(\lambda) = h^*_{\pm} + \chi R_{\lambda/c_0}g_{\pm}\;\textrm{for}\; \lambda \in \CC.
\end{align*}
Clearly, $h^*_{\pm} \in H_{\textrm{comp}}^2(\R^3)$ and 
\begin{align}\label{eq:128}
h_{\pm}(z_{\pm}(\tau)) = u^b_{\pm}.
\end{align}
A straightforward calculation gives
\begin{align}
\left(H_{\rho_\tau, k_\tau} - \lambda^2\right) h_{\pm}(\lambda) &=  (z^2_\pm(\tau) - \lambda^2) h^*_\pm - \left(H_{\rho_\tau, k_\tau} - z^2_\pm(\tau)\right)\chi R_{z_\pm(\tau)/c_0}g_{\pm} \notag \\
&+ \left(H_{\rho_\tau, k_\tau} - \lambda^2\right)\chi R_{\lambda/c_0}g_{\pm}. \label{eq:119}
\end{align}
By the definition of $\chi$ and $H_{\rho_\tau, k_\tau}$, we easily find 
\begin{align}
&\left[\left(H_{\rho_\tau, k_\tau} - z^2_\pm(\tau)\right)\chi R_{z_\pm(\tau)/c_0}g_{\pm} - \left(H_{\rho_\tau, k_\tau} - \lambda^2\right)\chi R_{\lambda/c_0}g_{\pm}\right](x) \notag\\
& = \begin{cases}
\; 0,  \quad\quad\qquad\qquad\qquad\qquad\qquad\qquad\qquad\qquad\qquad \qquad \qquad  \textrm{if}\; x\in \R^3 \backslash B_{r_2}\; \textrm{or}\; x \in B_{r_1}, \\
\big(z_{\pm}(\tau) - \lambda\big) \left[\left(\Delta - z^2_{\pm}(\tau)\right)\chi R'_{z_{\pm}(\tau)}g_\pm - 2z_{\pm}(\tau)\chi R_{z_{\pm}(\tau)}g_\pm \right](x) + \textrm{Res}_3^\pm(x), \quad \textrm{else},
\end{cases} \label{eq:120}
\end{align}
where 
\begin{align} \label{eq:121}
\left\|\textrm{Res}_3^\pm(x)\right\|_{L^2(B_{r_2}\backslash B_{r_1})} \le C|z_\pm(\tau) -\lambda|^2
\end{align}
and for $z\in \mathbb C$,
\begin{align*}
R'_z: L^2_{\textrm{comp}}(\R^3) \rightarrow L^2_{\textrm{loc}}(\R^3), \quad \left(R'_z \phi\right)(x):=\frac{i}{4\pi c_0}\int_{\R^3} e^{iz|x-y|/c_0}\phi(y)dy, \quad  x\in \R^3.
\end{align*}
Since, for any outgoing solution $f\in H^2_{\textrm{loc}}(\R^3)$, $R_{H_{\rho_\tau, k_\tau}}(\lambda)(H_{\rho_\tau, k_\tau} - \lambda^2) f = f$, we have 
\begin{align*}
R_{H_{\rho_\tau, k_\tau}}(\lambda)(H_{\rho_\tau, k_\tau} - \lambda^2)  h_\pm(\lambda) = h_\pm(\lambda).
\end{align*}
Taking $\lambda \rightarrow z_\pm(\tau)$ and using \eqref{eq:122}, \eqref{eq:129}, \eqref{eq:128}, \eqref{eq:119}, \eqref{eq:120} and \eqref{eq:121}, we arrive at 
\begin{align*}
u^b_\pm &=  |c_{{\textrm{norm}}}|^2 u^b_\pm \int_{B_{r_2}} \frac{\overline {u^b_\pm(x)} h^*_\pm}{k_{\tau}(x)} dx \\
&+ \frac{u^b_\pm |c_{{\textrm{norm}}}|^2}{2z_\pm(\tau)k_0}\int_{B_{r_2}\backslash B_{r_1}} \left[\left(\Delta - z^2_{\pm}(\tau)\right)\chi R'_{z_{\pm}(\tau)}g_\pm - 2z_{\pm}(\tau)\chi R_{z_{\pm}(\tau)}g_\pm \right](x) \overline{u^b_\pm(x)} dx. 
\end{align*}
This, together with \eqref{eq:65} and \eqref{eq:70} yields that 
\begin{align*}
1 = \frac{|c_{{\textrm{norm}}}|^2 }{\tau}\left(\int_{\Omega}\frac{|u^b_{\pm}(x)|^2}{k_1} dx + O(\sqrt{\tau})\right)\quad \textrm{for sufficiently small}\; \tau.
\end{align*}
Therefore, with the aid of \eqref{eq:127}, we have \eqref{eq:130}.
Building upon \eqref{eq:130}, \eqref{eq:127}, \eqref{eq:63} and \eqref{eq:129}, we obtain the last assertion of this theorem. 
\end{proof}

Building upon Lemma \ref{le:9} and Lemma \ref{le:6}, we obtain the following result regarding the large time behavior of the wave field, solving \eqref{eq:101} and \eqref{eq:102}.

\begin{theorem} \label{th:7}
Given $\tau,\vep >0$, let $u_{\tau,\vep}$ be the solution of equations \eqref{eq:101}--\eqref{eq:102}. Suppose that $\mathcal K$ is a compact set in $\R^3$. For every $F \in C_c^{M}\left(\R_+;L_{\mathrm{comp}}^2(\R^3)\right)$, with $M \in \mathbb N$ and $\mathrm{supp}_x F \in \mathcal K$, and $\zeta \in(0,M)$, then there exists a fixed $T$, only depending on the diameter of $\mathrm{supp}_t F$, such that when $\vep^{M-\zeta} = o(\tau)$, the following asymptotic expansion holds:
\begin{align*}
u_{{\tau,\vep}}(x,t) & =  \frac{\tau}{\vep}\frac{\mathcal C^2_\Omega k_1}{|\Omega|}\sum_{\pm}\frac{-ie^{-it\omega^{\pm}_M}}{2\omega^{\pm}_M} \frac{e^{i{\omega^{\pm}_M}|x-y_0|/c_0}}{4\pi|x-y_0|} \int_{\R^3} \frac{e^{-i \omega^{\pm}_M|y_0-y|/c_0}}{4\pi |y_0-y|} \hat F (y; -i\omega^{\pm}_M)dy + u_{\tau,\vep}^{res}(x,t)
\end{align*}
with $u_{\tau,\vep}^{\mathrm{res}}$ satisfying that
\begin{align*}
\left\|u_{\tau,\vep}^{\mathrm{res}}(\cdot, t)\right\|_{L^2(\mathcal K)} \le C \left[\frac{\tau \max(1,(\tau/\vep)^{1/2})}{\vep^{1/2}}e^{\mathrm{Im}(w_M^{\pm})t}\|F\|_{L^2(\R^3)} + \vep^{M-1-\zeta}\left\|\partial^M F\right\|_{L^2(\R^3)}\right]\\
\qquad\qquad\qquad\qquad\qquad\qquad\qquad \mathrm{for}\; t \in (T, \vep^{-\zeta}], \quad \mathrm{as}\; \vep \rightarrow 0.
\end{align*}
Here, $\omega^\pm_M:= z_{\pm}(\tau)/\vep$, $\hat F$ is the Fourier-Laplace transform of $F$ and $C$ is a positive constant depending on $\mathcal K$, but independent of $\tau$ and $\vep$.

\end{theorem}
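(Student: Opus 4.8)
\textbf{Proof proposal for Theorem \ref{th:7}.}

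The plan is to start from the contour-integral representation of $u_{\tau,\vep}$ provided by Lemma \ref{le:9}, namely
\begin{align*}
u_{\tau,\vep}(x,t) = \int_{\Gamma_\sigma} \frac{e^{-i\lambda t}}{2\pi}\left(R_{H_{\tau,\vep}(y_0)}(\lambda) k_{\tau,\vep}\hat F(\cdot,-i\lambda)\right)(x)\, d\lambda,
\end{align*}
and deform the contour $\Gamma_\sigma$ downward across the resonances. First I would use the scaling relation already exploited in the proof of Theorem \ref{th:6}: after the substitution $x = y_0 + \vep(\tilde x - y_0)$ the resolvent $R_{H_{\tau,\vep}(y_0)}(\lambda)$ becomes $\vep^2 R_{H_{\rho_\tau,k_\tau}(\Omega)}(\vep\lambda)$ acting on the rescaled source. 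Since all Fabry-P\'erot resonances associated with nonzero Neumann eigenvalues satisfy $z_0 \ne 0$ and hence correspond to $\lambda = z^{(j)}_0(\tau)/\vep \to \infty$ as $\vep\to 0$ (Theorems \ref{th:1}, \ref{th:2}), the only resonances of $H_{\tau,\vep}(y_0)$ that stay in a fixed compact neighborhood of the origin for small $\vep$ are the two Minnaert resonances $z_\pm(\tau)/\vep$. I would therefore deform $\Gamma_\sigma$ to a horizontal line $\Gamma_{-\sigma'}$ with $\sigma' > 0$ chosen so that $\mathrm{Im}(z_\pm(\tau)/\vep)$ lies above $-\sigma'$ but all other resonances lie below it; by the residue theorem and Lemma \ref{le:6} (which gives $m_R(z_\pm(\tau))=1$ and the explicit rank-one projection \eqref{eq:134}), the deformation picks up exactly the two residue terms
\begin{align*}
\sum_{\pm} \frac{-i e^{-it\omega^\pm_M}}{2\omega^\pm_M}\left(\Pi_{H_{\rho_\tau,k_\tau}}(z_\pm(\tau)) \,\widetilde{\,\cdot\,}\right)
\end{align*}
(using $R_{H}(\lambda) = -\Pi/(\lambda^2 - z_\pm^2) + B$ from \eqref{eq:122} and $\mathrm{Res}_{\lambda = z_\pm/\vep} e^{-i\lambda t}/(\lambda^2 - (z_\pm/\vep)^2) = -i e^{-i z_\pm t/\vep}/(2 z_\pm/\vep)$), plus the remaining integral over $\Gamma_{-\sigma'}$.

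Next I would insert the asymptotics of Lemma \ref{le:6} into the residue terms. Using \eqref{eq:130}, $|c_{\mathrm{norm}}|^2 = \tau k_1/|\Omega| + O(\tau^{3/2})$, the representation \eqref{eq:132} for $u_\pm$, and the fact that $z_\pm(\tau)/c_0 \to 0$ so that $SL_{\Gamma,z_\pm/c_0} \to SL_{\Gamma,0}$ and $\int_\Gamma S^{-1}_{\Gamma,0}1\, d\sigma = \mathcal C_\Omega$, one computes that the inner product $\int \overline{u_\pm}\, (k_\tau)^{-1} (\text{rescaled source})$ produces, to leading order, $\mathcal C_\Omega \cdot \vep^{-1}\cdot(\text{Green kernel})$ evaluated with wave number $\omega^\pm_M/c_0$; the prefactor $k_\tau^{-1}=k_0^{-1}$ outside $\Omega$ combines with $|c_{\mathrm{norm}}|^2$ and the outgoing behavior $u_\pm = (\text{const})\, e^{i\omega^\pm_M|x-y_0|/c_0}/(4\pi|x-y_0|)$ to give exactly the stated leading term $(\tau/\vep)(\mathcal C^2_\Omega k_1/|\Omega|)\sum_\pm \cdots$. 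The Fourier-Laplace transform $\hat F(y;-i\omega^\pm_M)$ appears because $k_{\tau,\vep}\hat F(\cdot,-i\lambda)$ is the source and $\lambda = \omega^\pm_M$ at the pole. The error in replacing $u_\pm$, $c_{\mathrm{norm}}$ and the kernels by their leading asymptotics is controlled by \eqref{eq:133}, \eqref{eq:130} and the smoothness of the Green function, contributing the $\tau\max(1,(\tau/\vep)^{1/2})\vep^{-1/2} e^{\mathrm{Im}(w_M^\pm)t}\|F\|$ part of $u^{\mathrm{res}}_{\tau,\vep}$; the factor $e^{\mathrm{Im}(\omega^\pm_M)t}$ is the decay/growth of $e^{-it\omega^\pm_M}$ and is harmless for $t \le \vep^{-\zeta}$ since $\mathrm{Im}(\omega^\pm_M) = \mathrm{Im}(z_\pm(\tau))/\vep = \Theta(\tau/\vep)$ while $\vep^{M-\zeta}=o(\tau)$ keeps $\tau/\vep$ small enough relative to the time horizon.

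The main obstacle is bounding the remainder integral over $\Gamma_{-\sigma'}$ uniformly in $t \in (T,\vep^{-\zeta}]$, and this is where the regularity hypothesis $F \in C^M_c$ and the parameter $\zeta \in (0,M)$ enter. On $\Gamma_{-\sigma'}$ the factor $e^{-i\lambda t}$ has modulus $e^{\sigma' t}$, which grows, so one cannot simply bound the integrand; instead I would integrate by parts $M$ times in $\lambda$, transferring derivatives onto $\hat F(\cdot,-i\lambda)$ — each $\lambda$-derivative of $\hat F$ gains a factor controlled by $\|\partial^j_t F\|$ and, crucially, each integration by parts produces a factor $1/t$, so after $M$ steps one gains $t^{-M}$, which at $t = \vep^{-\zeta}$ is $\vep^{M\zeta}$; combined with the requirement that $\sigma'$ be taken of order $\vep^{-1}\cdot(\text{const})$ — dictated by where the far resonances $z^{(j)}_0(\tau)/\vep$ sit — one needs the causal support condition (that $\hat F(\cdot,-i\lambda)$ is entire of exponential type $\le \mathrm{diam}(\mathrm{supp}_t F)$, which is why $T$ depends only on $\mathrm{diam}(\mathrm{supp}_t F)$) to absorb the growth $e^{\sigma' t}$, and a resolvent bound $\|R_{H_{\tau,\vep}(y_0)}(\lambda)\|_{L^2_\beta \to L^2_{-\beta}} = O(1)$ away from resonances on $\Gamma_{-\sigma'}$, which follows from the high-contrast resolvent estimate philosophy underlying Theorem \ref{th:3} after rescaling (using $\vep^{M-\zeta}=o(\tau)$ to guarantee one stays in the invertibility regime $\mathcal I_I(\delta_I)$ of Lemma \ref{le:0}(c)). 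Balancing these factors yields the claimed bound $\vep^{M-1-\zeta}\|\partial^M F\|$ for the tail. I would carry out the argument in the order: (i) contour representation and rescaling; (ii) contour deformation picking up the two Minnaert residues; (iii) asymptotic evaluation of the residues via Lemma \ref{le:6}; (iv) the integration-by-parts estimate of the $\Gamma_{-\sigma'}$ tail; (v) assembling all error terms. Step (iv) is the technical heart.
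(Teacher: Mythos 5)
Your overall plan --- Laplace/contour representation (Lemma~\ref{le:9}), the rescaling identity (\ref{eq:83}), picking up the Minnaert residues, and evaluating them through Lemma~\ref{le:6} --- matches the paper's proof of Theorem~\ref{th:7}. But the contour-deformation step as you state it contains a genuine gap, and your remainder estimate differs substantially from the one that actually delivers the claimed exponent.

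You propose deforming the entire horizontal line $\Gamma_\sigma$ to $\Gamma_{-\sigma'}$, ``with $\sigma'$ chosen so that $\mathrm{Im}(z_\pm(\tau)/\vep)$ lies above $-\sigma'$ but all other resonances lie below it.'' This separation is not available. By Theorem~\ref{th:2} the Fabry-P\'erot resonances $z_0^{(j)}(\tau)$ also have imaginary part $\Theta(\tau)$ (with a $z_0$-dependent constant), so after rescaling they sit at depth $\Theta(\tau/\vep)$, comparable to the Minnaert pair; moreover their real parts $z_0/\vep$ march off to infinity and, as $z_0$ grows, the constants in front of $\tau$ in (\ref{eq:38}) decrease, so the resonances accumulate toward the real axis. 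There is therefore no horizontal line $\Gamma_{-\sigma'}$ that isolates exactly $z_\pm(\tau)/\vep$: a global deformation would necessarily cross (or marginally graze) infinitely many Fabry-P\'erot poles, and the uniform resolvent bound you invoke on $\Gamma_{-\sigma'}$ is not established (the paper only proves the coercivity estimate (\ref{eq:131}) on $\Gamma_\sigma$ with $\sigma>0$). This is precisely why the paper does not deform the whole line. Instead it first \emph{splits} $\Gamma_\sigma$ into the bounded central piece $I^\vep_\sigma = \{\vep\mathrm{Re}(z)\in I,\ \mathrm{Im}(z)=\sigma\}$ and the two tails, bounds the tails directly \emph{above} the real axis via (\ref{eq:131}) and the bound (\ref{eq:136}) $|\hat F(x,-iz)|\lesssim (1+|z|)^{-M}e^{-T\mathrm{Im}(z)}\|\partial_t^M F\|$ (obtained by integrating by parts in $t$, not in $\lambda$), getting $\int_{|s|>|I|/2\vep}(1+|s|)^{-M}ds\sim\vep^{M-1}$, and only then deforms the bounded piece through a rectangle of depth $\delta_I/\vep$, where Theorem~\ref{th:1} (with $I$ chosen so that $I\cap\Lambda_N=\{0\}$) guarantees no other resonances appear.

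Relatedly, your tail estimate via $M$ integrations by parts in $\lambda$ against $e^{-i\lambda t}$ produces the factor $t^{-M}$ but also pulls out $\lambda$-derivatives of $\hat F$, which grow like $T^M$ rather than giving the $(1+|\lambda|)^{-M}$ decay needed to control the $\vep^{M-1}$ factor; the way your constants would combine to give exactly $\vep^{M-1-\zeta}$ is not carried through. The paper's route --- IBP in $t$ to decay $\hat F$ in $|\lambda|$, then the elementary tail integral, then set $\sigma=\vep^\zeta$ so that $e^{\sigma t}/\sigma\le e\,\vep^{-\zeta}$ for $t\le\vep^{-\zeta}$ --- yields the bound $\vep^{M-1-\zeta}\|\partial^M_t F\|$ cleanly. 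Your residue evaluation and normalization via (\ref{eq:130})--(\ref{eq:133}) are on the right track.
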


\begin{proof}
For any $f\in L^2_{\textrm{comp}}(\R^3)$, it can be seen that 
\begin{align}
(R_{H_{\tau,\vep}(y_0)}(z) k_{\tau,\vep}f)(x) = \vep^2 \left(R_{H_{\rho_\tau, k_\tau}}(\vep z) \widetilde {k_{\tau,\vep}f}\right)(y_0 + (x-y_0)/\vep), \quad x \in \R^3. \label{eq:83}
\end{align}
Here, 
\begin{align*}
\widetilde {k_{\tau,\vep} f}(x) := \left(k_{\tau,\vep}f\right)(y_0 + \vep(x-y_0))\quad \mathrm{for}\; x\in \R^3.
\end{align*}
Since $F\in C_c^{\infty}\left(\R_+;L_{\mathrm{comp}}^2(\R^3)\right)$, we assume that $F(x,t) = 0$ for almost every $x\in \R^3$ when $t\in [ T,+ \infty) \cup (-\infty,0]$.  With the aid of Lemma \ref{le:9}, given each $\sigma>0$, we can rewrite $u_{\tau,\vep}$ as
\begin{align}
u_{\tau,\vep}(x,t) &= \left[\int_{I_{\sigma,\vep}} + \int_{\Gamma_\sigma \backslash I^\vep_{\sigma}}\right]\frac{e^{-i\lambda t}}{2\pi}\left(R_{H_{\tau,\vep}(y_0)}({\lambda}) k_{\tau,\vep} \hat F (\cdot, -i\lambda)\right)(x) d\lambda \notag\\
& = u^{(1)}_{\tau,\vep}(x,t) + u^{(2)}_{\tau,\vep}(x,t), \quad t > 0, \; x \in \R^3. \label{eq:80}
\end{align}
Here, $I \subset \R$ is a bounded interval, which is symmetric about the origin and excludes any nonzero element in $\Lambda_N$ with $\Lambda_N$ specified by \eqref{eq:82}, $\Gamma_\sigma$ is given by \eqref{eq:163}, and 
\begin{align*}
I^\vep_{\sigma} := \{ z\in \mathbb C: \vep \textrm{Re}(z) \in I,\; \textrm{Im}(z) = \sigma\}.
\end{align*}
The rest of the proof consists of two steps. The first step involves estimating $u_{\tau,\vep}^{(2)}$ and the second step addresses the estimate of $u_{\tau,\vep}^{(1)}$.

\textbf{Step 1}: In this step, we estimate $u_{\tau,\vep}^{(2)}$. 
By integration by parts, there exists a positive constant $C$ such that for all $z\in \mathbb C$ and $x\in \mathcal K$
\begin{align} \label{eq:136}
\left|\hat F(x, -iz)\right| \le C (1+|z|)^{-M} e^{-T\mathrm{Im}(z)}\left\|\partial^M_t F\right\|_{L^2(\R^3)}.
\end{align}
From this, together with the following inequality (see its derivation in the appendix \ref{sec:a23})
\begin{align} \label{eq:131}
\|R_{H_{\tau,\vep}(y_0)}(\lambda) k_{\tau,\vep} f\|_{L^2(\R^3)} \le \frac{C}{\sigma} \|f\|_{L^2(\R^3)}, \quad  \textrm{for}\; \lambda \in \Gamma_\sigma  \;\textrm{and}\; f \in L^2(\R^3),
\end{align}
we arrive at: for $t\in \R_+$,
\begin{align*}
\left\|u_{\tau,\vep}^{(2)}(\cdot,t)\right\|_{L^2(\mathcal K)} \le C \frac{e^{\sigma t}}\sigma \int^{+\infty}_{|I|/2\vep} \frac 1{(1+|s|)^{M}} ds \left\|\partial^M_t F\right\|_{L^2(\R^3)} \le C \vep^{M-1}\frac{e^{\sigma t}}{\sigma} \left\|\partial^M_t F\right\|_{L^2(\R^3)}.
\end{align*}

\begin{figure}[htbp]
\centering
\includegraphics{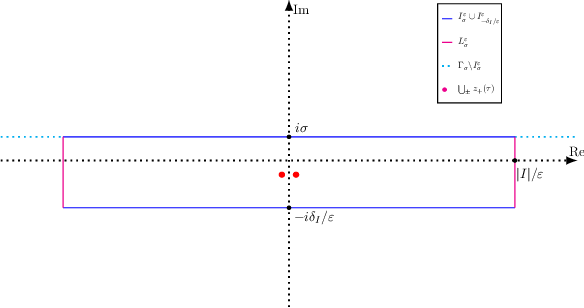}
\caption{The integral curves $I_{\sigma}^{\vep}$, $L_{\sigma}^{\vep}$ and $I_{-\delta_I/\vep}^{\vep}$} \label{fi2}
\end{figure}

\textbf{Step 2}: In this step, we establish an asymptotic estimate of $w_{\tau,\vep}^{(1)}$.
Building upon Theorem \ref{th:1}, Lemma \ref{le:6} and formula \eqref{eq:83}, we obtain that for sufficiently small $\tau$, apart from $z_{\pm}(\tau)$, specified in \eqref{eq:65}, no resonances of $H_{\tau,\vep}$ lie in the following strip 
\begin{align*}
\left\{z \in \mathbb C: \vep \textrm{Re}(z) \in \overline I,\; \textrm{Im}(z)\in [-\delta_I/\vep, \sigma]\right\}.
\end{align*}
Here, $\delta_I \in \R_+$ is specified in Theorem \ref{th:1}. Therefore, with the aid of \eqref{eq:122} and Cauchy integral theorem (see Figure \ref{fi2} for the transformation of the contour), we have 
\begin{align} \label{eq:137}
u_{\tau,\vep}^{(1)}(x,t) = -i 
\sum_{\pm}\frac{e^{-it\omega^\pm_M}}{2\omega^\pm_M} \Pi_{H_{\tau,\vep}(y_0)}\left[\left(\omega^\pm_M\right)k_{\tau,\vep} \hat F\left(\cdot, -i\omega^\pm_M\right)\right](x) + u_{\tau,\vep}^{(1,1)}(x,t) + u_{\tau,\vep}^{(1,2)}(x,t),
\end{align}
where
\begin{align*}
&u_{\tau,\vep}^{(1,1)}(x,t) = \int_{L_{\sigma}^{\vep}}\frac{e^{-i\lambda t}}{2\pi}\left(R_{H_{\tau,\vep}(y_0)}({\lambda}) k_{\tau,\vep} \hat F (\cdot, -i\lambda)\right)(x) d\lambda,\\
&u_{\tau,\vep}^{(1,2)}(x,t) = \int_{I_{-\delta_I/\vep}^{\vep}}\frac{e^{-i\lambda t}}{2\pi}\left(R_{H_{\tau,\vep}(y_0)}({\lambda}) k_{\tau,\vep} \hat F (\cdot, -i\lambda)\right)(x) d\lambda.
\end{align*}
Here, 
\begin{align*}
L_{\sigma}^{\vep}:= \left\{z \in \mathbb C: \textrm{Re}(z) = \pm|I|/\vep,\; \textrm{Im}(z)\in [-\delta_I/\vep, \sigma]\right\}.
\end{align*}
For the estimate of $u_{\tau,\vep}^{(1,1)}$ and $u_{\tau,\vep}^{(1,2)}$, due the choice of $I$, which guarantees the uniform boundedness of $R_{H_{\tau,\vep}(y_0)}({\lambda})$ in $L_{\sigma}^{\vep} \cup I_{-\delta_I/\vep}^{\vep} $, we can use \eqref{eq:136} to obtain
\begin{align}
&\|u_{\tau,\vep}^{(1,1)}(\cdot,t)\|_{L^2(\mathcal K)} \le C\vep^{M-1} e^{\sigma t} e^{-(t-T)\delta_I/\vep}\left\|\partial^M_t F\right\|_{L^2(\R^3)}, \label{eq:138}\\
&\|u_{\tau,\vep}^{(1,2)}(\cdot,t)\|_{L^2(\mathcal K)} \le C \vep^{M-1} e^{-(t-T)\delta_I/\vep}\left\|\partial^M_t F\right\|_{L^2(\R^3)}, \quad t > T. \label{eq:139}
\end{align}

We proceed to estimate the first term in \eqref{eq:137}. By \eqref{eq:64} and \eqref{eq:83}, it is easy to verify that
\begin{align} \label{eq:135}
\left(\Pi_{H_{\tau,\vep}(y_0)}(z_-(\tau)/\vep) k_{\tau,\vep}f\right) (x) = \left(\Pi_{H_{\rho_\tau, k_\tau}}(z_-(\tau)) \widetilde {k_{\tau,\vep}f} \right)(y_0 + (x-y_0)/\vep), \quad f\in L^2_{\textrm{comp}}(\R^3).
\end{align}
Let $u_\pm$ be given by Lemma \ref{le:6}. Since $D_N(0) = -(1/2-K^*_{\Gamma,0})S^{-1}_{\Gamma,0}$ and $-1/2$ is the simple eigenvalue of $K^*_{\Gamma,0}$ with the corresponding eigenfunction being $\left(S^{-1}_{\Gamma,0}\right)(x)$, utilizing \cite[Lemma 2.3]{LS-04}, \eqref{eq:132} and \eqref{eq:133}, we find
\begin{align} \label{eq:124}
u_\pm(y_0 + (x-y_0)/\vep) =  \sqrt{\frac{\tau k_1}{|\Omega|}} \left[\vep\frac{e^{i{w_M^{\pm}}|x-y_0|/c_0}}{4\pi|x-y_0|} \mathcal C_\Omega + O_{L^2(\mathcal K)}(\vep^{3/2})\right].
\end{align}
From \eqref{eq:124}, a straightforward calculation gives 
\begin{align*}
& \int_{\R^3} \overline{u_{\pm}(x)} \frac{\widetilde {k_{\tau,\vep}f}(x)}{k_\tau(x)} dx = \frac{1}{\vep^3}\int_{\R^3} \overline{u_\pm(y_0 + (x-y_0)/\vep)} f(x) dx \\
& = \frac{\mathcal C_\Omega}{\vep^{3}}\sqrt{\frac{\tau k_1}{|\Omega|}} \left[\vep \int_{\R^3} \frac{e^{-i \omega^{\pm}_M|y_0-y|/c_0}}{4\pi |y_0-y|} f (y)dy  + O_{L^2(\mathcal K)}(\vep^{3/2}\|f\|_{L^2(\R^3)}) \right] \quad \textrm{for}\; f \in L^2_{\textrm{comp}}(\R^3). 
\end{align*}
Combining this with \eqref{eq:134}, \eqref{eq:135} and \eqref{eq:124} yields that
\begin{align}
\left(\Pi_{H_{\tau,\vep}(y_0)}(z_\pm(\tau)/\vep) k_{\tau,\vep}f\right)(x) = \frac{\tau}{\vep}\frac{\mathcal C^2_\Omega k_1}{|\Omega|} \frac{e^{i{\omega^{\pm}_M}|x-y_0|/c_0}}{4\pi|x-y_0|}\int_{\R^3} \frac{e^{-i \omega^{\pm}_M|y_0-y|/c_0}}{4\pi |y_0-y|} f (y)dy + \mathrm{Rem}^f_1(x), \notag
\end{align}
where $\mathrm{Rem}^f_1$ satisfies
\begin{align*}
\|\mathrm{Rem}^f_1\|_{L^2(\mathcal K)} \le C \frac{\tau \max(1,(\tau/\vep)^{1/2})}{\vep^{1/2}}\|f\|_{L^2(\R^3)} \quad \textrm{for}\; f \in L^2_{\textrm{comp}}(\R^3). 
\end{align*}
This, together with \eqref{eq:137}, \eqref{eq:138} and \eqref{eq:139} we obtain 
\begin{align*}
u_{\tau,\vep}^{(1)}(x,t) =  \frac{\tau}{\vep}\frac{\mathcal C^2_\Omega k_1}{|\Omega|}\sum_{\pm}\frac{-ie^{-it \omega^{\pm}_M}}{2\omega^{\pm}_M} \frac{e^{i{\omega^{\pm}_M}|x-y_0|/c_0}}{4\pi|x-y_0|} \int_{\R^3} \frac{e^{-i \omega^{\pm}_M|y_0-y|/c_0}}{4\pi |y_0-y|} \hat F (y; -i\omega^{\pm}_M)dy +  \mathrm{Rem}^F_2(x,t),
\end{align*}
where $\mathrm{Rem}^F_2(\cdot,t)$ satisfies
\begin{align*}
\|\mathrm{Rem}^F_2(\cdot,t)\|_{L^2(\mathcal K)} \le C \left[\frac{\tau \max(1,(\tau/\vep)^{1/2})}{\vep^{1/2}}e^{\mathrm{Im}(w_M^{\pm})t}\|F\|_{L^2(\R^3)} + \vep^{M-1} e^{\sigma t}\left\|\partial^M_t F\right\|_{L^2(\R^3)}\right]
\end{align*}
for $t>T$.

In conjunction with the formula \eqref{eq:80} and the results of two previous steps for the case when $\sigma = \vep^\zeta$, we obtain the assertion of this theorem.
\end{proof}

\begin{appendices}
\renewcommand{\theequation}{\Alph{section}.\arabic{equation}}

\section{} \label{sec:a}

\refstepcounter{subsection}
\subsection*{\thesubsection\quad Gohberg and Sigal theory}

This section provides a brief introduction to the Gohberg and Sigal theory used in this paper, for more details, we refer to \cite{AK-09, DM}.

\begin{definition}
Let $X$ and $Y$ be two Banach spaces. Suppose that $V \subset \mathbb C$ is a connected open set. We say that $\lambda \mapsto B(\lambda)\in \mathcal L(X,Y)$ is a meromorphic family of operators in $V$ if for any $\mu \in V$, there exist operators $B_j,\; 1\le j\le J$, with a finite range, and a family of operators $\lambda \mapsto B_0(\lambda)$, holomorphic near $z$, such that 
\begin{align}\label{eq:47}
B(\lambda) = B_0(\lambda) + \frac{B_1}{\lambda-\mu} + \cdots \frac{B_J}{(\lambda-\mu)^{J}} \quad \textrm{near}\; \mu.
\end{align}
Furthermore, we say that $B(\lambda)$ is a meromorphic family of Fredholm operators if for every $\mu \in V$, $B_0(\lambda)$ is a Fredholm operator for $\lambda$ near $\mu$. In particular, for nonsingular $\mu$, $B_0(\lambda) = B(\lambda)$. 
\end{definition}

\begin{definition}
Let $X$ be a Banach space. We say that $\lambda \mapsto B(\lambda)\in \mathcal L(X)$ is normal at $\mu$ if $B(\mu)$ has an expansion \eqref{eq:47} with $B_0(\mu)$ being a Fredholm operator of index $0$, and $B(\lambda)$ is holomorphic and invertible in a punctured neighborhood of $\mu$. Furthermore, we call that $B(\lambda)$ is normal with respect to $\partial V$ if the operator $B(\lambda)$ is invertible in $\overline V$, except for a finite number of points  which are normal points of $B(\lambda)$. Here, $V$ is a simply bounded domain with rectifiable boundary $\partial V$.
\end{definition}

\begin{theorem} \label{le:a2}
Let $X$ be a Banach space. Suppose that $\lambda \mapsto B(\lambda)\in \mathcal L(X)$ with $\lambda \in V$ is a meromorphic family of Fredholm operators. Here, $V \subset \mathbb C$ is a simply bounded domain with rectifiable boundary $\partial V$. If  $B_0(\mu)$ in \eqref{eq:47} has index $0$, then there exist families of operators $\lambda \mapsto G_j(\lambda),\;l=1,2$, holomorphic and invertible near $\mu$, and operators $P_l, 1\le l \le J$, such that near $\mu$,
\begin{align}
& B(\lambda) = G_1(\lambda) \left(\mathcal P_0 + \sum_{l=1}^{J} (\lambda - \mu)^{k_l} \mathcal P_l\right)G_2(\lambda), \quad k_l \in \mathbb Z \backslash \{0\}. \label{eq:51}
\end{align}
Here, $\mathcal P_1,\ldots, \mathcal P_J$ are one-dimensional operators, $\mathcal P_{l_1}\mathcal P_{l_2} = \delta_{l_1l_2} \mathcal P_{l_2}$ with $0\le l_1,l_2 \le J$, and $\mathbb I - \sum^J_{l=0}\mathcal P_l$ is a finite dimensional operator. 
\end{theorem}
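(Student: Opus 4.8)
The final statement to prove is Theorem~\ref{le:a2}, the Gohberg--Sigal-type factorization \eqref{eq:51} for a meromorphic family of Fredholm operators whose holomorphic part $B_0(\mu)$ has index $0$. The plan is to reduce the problem, in two stages, to a finite-dimensional matrix computation and then to apply the Smith normal form for holomorphic matrix germs. First I would normalize the singular part: using the expansion \eqref{eq:47} with $B_1,\dots,B_J$ of finite rank, I would write $B(\lambda) = B_0(\lambda)\bigl(\mathbb I + B_0(\lambda)^{-1}R(\lambda)\bigr)$ on a punctured neighborhood where $B_0$ is invertible (it is Fredholm of index $0$ and invertible for $\lambda$ near, but not equal to, $\mu$ off a discrete set; shrinking $V$ we may assume this), where $R(\lambda)=\sum_{j\ge 1}B_j(\lambda-\mu)^{-j}$ has finite-dimensional range. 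Thus modulo left-multiplication by the holomorphic invertible germ $B_0(\lambda)$, the meromorphy is carried by a germ of the form $\mathbb I + (\text{finite rank, meromorphic})$, which is a holomorphic family of Fredholm operators near $\mu$ once we multiply through by a scalar power $(\lambda-\mu)^J$ to clear poles and then divide back at the end.

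Second, I would invoke the standard reduction of a holomorphic Fredholm germ of index $0$ to a holomorphic square-matrix germ. Concretely, near $\mu$ the operator $A(\lambda):=\mathbb I + B_0(\lambda)^{-1}R(\lambda)$ (or its pole-cleared version) has $A(\mu)$ Fredholm of index $0$; choosing complements of $\mathrm{Ker}(A(\mu))$ and $\mathrm{Ran}(A(\mu))$ and writing $A(\lambda)$ in $2\times 2$ block form, the block acting on the infinite-dimensional complement is invertible at $\mu$, hence invertible and holomorphic near $\mu$; a Schur-complement elimination, realized by left- and right-multiplication by holomorphic invertible operator germs, conjugates $A(\lambda)$ to $\mathrm{diag}\bigl(\mathbb I_\infty,\, M(\lambda)\bigr)$ where $M(\lambda)$ is a holomorphic germ of $k\times k$ matrices with $k=\dim\mathrm{Ker}(A(\mu))$. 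This is exactly the place where the finiteness of the rank of $B_1,\dots,B_J$ and the index-$0$ hypothesis get used, and it is what forces the correction $\mathbb I-\sum_{l=0}^J\mathcal P_l$ in the conclusion to be finite-dimensional.

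Third, I would apply the Smith--McMillan normal form: any holomorphic germ of $k\times k$ matrices $M(\lambda)$ with $\det M\not\equiv 0$ factors as $M(\lambda)=U(\lambda)\,\mathrm{diag}\bigl((\lambda-\mu)^{k_1},\dots,(\lambda-\mu)^{k_k}\bigr)\,W(\lambda)$ with $U,W$ holomorphic invertible germs of matrices and $k_l\in\mathbb Z_{\ge 0}$ (the $k_l$ being, up to sign once we reinsert the scalar power we pulled out, the partial multiplicities); the case $k_l=0$ simply contributes more identity blocks. Absorbing $U,W$ into the already-collected holomorphic invertible factors, reinstating $B_0(\lambda)$ on the left and the scalar power $(\lambda-\mu)^{-J}$ (which shifts the exponents $k_l$ into $\mathbb Z\setminus\{0\}$, consistent with the statement), and setting $\mathcal P_l$ to be the rank-one matrix units $E_{ll}$ conjugated into $X$ via the chosen decomposition, yields \eqref{eq:51} with $\mathcal P_{l_1}\mathcal P_{l_2}=\delta_{l_1 l_2}\mathcal P_{l_2}$ and $\mathbb I-\sum_{l=0}^J\mathcal P_l$ finite-dimensional, exactly as claimed.

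The main obstacle I anticipate is the bookkeeping in the second step: making the Schur-complement reduction of a Fredholm \emph{operator} germ (not just a single operator) genuinely holomorphic and invertible in a fixed neighborhood of $\mu$, and tracking that all auxiliary factors introduced there — projections onto the complements, the inverse of the good block — depend holomorphically on $\lambda$ and are invertible. Nondegeneracy $\det M\not\equiv 0$, needed for Smith normal form, must also be checked; it follows because $B(\lambda)$ is invertible on a punctured neighborhood of $\mu$ by the ``normal point'' hypothesis implicit in the meromorphic-Fredholm setup, but one has to trace this through the reductions. All of this is classical (see \cite{AK-09, DM}), so once the reduction bookkeeping is set up carefully the rest is routine; accordingly I would present the first and third steps briskly and devote the bulk of the write-up to the operator-germ Schur reduction.
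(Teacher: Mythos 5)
The paper does not actually prove Theorem~\ref{le:a2}: it is stated in Appendix~A purely as background, with the reader referred to \cite{AK-09, DM} for the argument. So there is no ``paper's own proof'' to compare against; I can only assess your sketch on its own terms. Your overall plan --- reduce a Fredholm germ of index~$0$ to a finite holomorphic matrix germ by a Schur-type block elimination and then invoke the Smith normal form --- is indeed the standard route taken in those references, so the architecture is right. But two of the steps as written contain genuine gaps.

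First, in your step~1 you declare $B_0(\lambda)$ to be a ``holomorphic invertible germ'' that can be peeled off to the left. The hypothesis only gives that $B_0(\mu)$ is Fredholm of index~$0$; it can perfectly well have a kernel at $\mu$, in which case $B_0$ is \emph{not} an invertible germ and cannot be absorbed into $G_1(\lambda)$, which must be invertible at $\mu$. Your proposed patch --- multiplying through by $(\lambda-\mu)^J$ to clear poles and dividing back later --- does not repair this: $(\lambda-\mu)^J B(\lambda)$ restricted to $\lambda=\mu$ equals the finite-rank operator $B_J$, which on an infinite-dimensional $X$ is not Fredholm, so the Schur reduction in step~2 has nothing to bite on; and $B_0(\lambda)^{-1}$ may itself contribute a pole that $(\lambda-\mu)^J$ does not control. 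The correct sequencing is to run the Schur block reduction \emph{directly} on the holomorphic germ $B_0(\lambda)$ (using only index~$0$ at $\mu$, not invertibility), producing a holomorphically-conjugated form $\mathrm{diag}(\mathbb I_\infty, M_0(\lambda))$; the finite-rank meromorphic tail $R(\lambda)=\sum_{j\ge1}B_j(\lambda-\mu)^{-j}$ is then pushed through the same holomorphic conjugations and, being finite rank, lands (after a further elementary block operation) inside a finite-dimensional block, yielding a \emph{meromorphic} matrix germ $M(\lambda)$ to which Smith normal form with integer (possibly negative or zero) exponents applies. Only at that point do $B_0$ and $R$ interact correctly.

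Second, your justification that $\det M\not\equiv 0$, via a ``normal-point hypothesis implicit in the meromorphic-Fredholm setup,'' is unwarranted: the theorem as stated in the paper does \emph{not} assume $B$ is normal at $\mu$, and this is precisely why the conclusion allows $\mathbb I-\sum_{l=0}^{J}\mathcal P_l$ to be a nonzero finite-dimensional operator rather than forcing $\sum_l\mathcal P_l=\mathbb I$. In the degenerate case the Smith form has some zero diagonal entries; those entries account for the missing finite-dimensional piece, and the nonzero ones give the $(\lambda-\mu)^{k_l}$ with $k_l\in\mathbb Z\setminus\{0\}$. Your write-up should treat this case rather than assume it away, since otherwise you would only be proving the normal-point version, which is the separate statement~(a) of Remark~\ref{re:1} rather than Theorem~\ref{le:a2} itself.
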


\begin{remark} \label{re:1}
Let $B(\lambda)$ be the family of operators considered in Theorem \ref{le:a2}. We have the following interpretations.
\begin{enumerate}[(a)]
\item \label{h1} $B(\lambda)$ is normal at $\mu$ if and only if $\sum^J_{l=0}\mathcal P_l = \mathbb I$, in which case,
\begin{align*}
 \left(B(\lambda)\right)^{-1} = \left(G_2(\lambda)\right)^{-1} \left(\mathcal P_0 + \sum_{l=1}^{J} (\lambda - \mu)^{-k_l} \mathcal P_l\right)\left(G_1(\lambda)\right)^{-1}, \quad k_l \in \mathbb N \backslash \{0\}.
\end{align*}
\item The factorization \eqref{eq:51} of $B(\lambda)$ provides a definition of a null multiplicity of $B(\lambda)$ at $\mu$, defined by 
\begin{align*}
N_{\mu}(B):= \begin{cases}
\sum_{k_l > 0} k_l & {\rm{if}}\;  J = {\rm{dim}}({\rm{Ran}}(\mathbb I-P_0)),\\
\infty  &{\rm{if}}\; J < {\rm{dim}}({\rm{Ran}}(\mathbb I-P_0)).
\end{cases}
\end{align*}
When $N_{\mu}(B) < \infty$, $B(\lambda)^{-1}$ is meromorphic and 
\begin{align*}
N_{\mu}(B^{-1}):=\sum_{k_l < 0} k_l.
\end{align*}
\end{enumerate}
\end{remark}

\begin{theorem}[Generalized Rouché's Theorem] \label{th:a2}
Let $X$ be a Banach space. Suppose that $\lambda \mapsto B_1(\lambda)\in \mathcal L(X)$ with $\lambda \in V$ is normal with respect to $\partial V$. Here, $V \subset \mathbb C$ is a simply bounded domain with rectifiable boundary $\partial V$. Assume that $\lambda \mapsto B_2(\lambda)\in \mathcal L(X)$ with $\lambda \in V$ is a meromorphic family of Fredholm operators.
If 
\begin{align*}
\|B^{-1}_1(\lambda) B_2(\lambda)\|_{\mathcal L(X)} < 1, \lambda \in \partial V,
\end{align*}
we have that $B_1 + B_2$ is also normal with respect to $\partial V$ and that
\begin{align*}
\sum_{\mu \in V} N_{\mu}(B_1)- N_{\mu}(B_1^{-1}) = \sum_{\mu \in V} N_{\mu}(B_1+ B_2)- N_{\mu}((B_1+B_2)^{-1}).
\end{align*}
\end{theorem}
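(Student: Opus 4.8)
The plan is to reduce the statement to the \emph{generalized argument principle} — the operator-valued analogue of the classical argument principle — together with a homotopy in the perturbation parameter. First I would record the argument principle: if $B(\lambda)$ is normal with respect to $\partial V$, then $\frac{1}{2\pi i}\int_{\partial V}B'(\lambda)B(\lambda)^{-1}\,d\lambda$ is a finite-rank operator, being the sum of the residues of $B'B^{-1}$ at the finitely many non-invertible points of $B$ inside $V$, each residue finite-rank because of the one-dimensional projections in the Gohberg--Sigal factorization of Theorem \ref{le:a2}; and moreover
\begin{align*}
\mathcal M(B;\partial V):=\mathrm{tr}\left(\frac{1}{2\pi i}\int_{\partial V}B'(\lambda)B(\lambda)^{-1}\,d\lambda\right)=\sum_{\mu\in V}\big(N_\mu(B)-N_\mu(B^{-1})\big).
\end{align*}
To prove this identity I would localize near each non-invertible point $\mu$, insert the factorization $B(\lambda)=G_1(\lambda)\big(\mathcal P_0+\sum_l(\lambda-\mu)^{k_l}\mathcal P_l\big)G_2(\lambda)$, expand $B'B^{-1}$, and exploit cyclicity of the trace: the contributions of $G_1$ and $G_2$ become holomorphic after cycling and carry no residue, while the diagonal factor produces $\sum_l k_l(\lambda-\mu)^{-1}\mathcal P_l$, so that (using $\mathrm{tr}\,\mathcal P_l=1$) the residue of the trace at $\mu$ equals $\sum_l k_l$, which is $N_\mu(B)-N_\mu(B^{-1})$ in the notation of Remark \ref{re:1}.

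Next I would introduce the homotopy $B_t:=B_1+tB_2$ for $t\in[0,1]$, noting $B_0=B_1$. On $\partial V$ we may factor $B_t=B_1\big(\mathbb I+tB_1^{-1}B_2\big)$, and the hypothesis $\|B_1^{-1}(\lambda)B_2(\lambda)\|_{\mathcal L(X)}<1$ on $\partial V$ gives $\|tB_1^{-1}B_2\|_{\mathcal L(X)}<1$ there, so $\mathbb I+tB_1^{-1}B_2$ is invertible on $\partial V$ by a Neumann series, hence so is $B_t$, with $B_t^{-1}$ uniformly bounded on $\partial V$ for $t\in[0,1]$. Moreover $B_t$ is again a meromorphic family of Fredholm operators of index $0$ (near $\partial V$ via the above factorization, and at interior points by combining the normal structure of $B_1$ with the finite-rank principal parts of $B_2$), and being invertible on $\partial V$ it follows from the analytic Fredholm theorem that $B_t^{-1}$ is meromorphic in $V$ with an isolated — hence, $\overline V$ being compact, finite — set of non-invertible points, each a normal point. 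Thus $B_t$ is normal with respect to $\partial V$ for every $t\in[0,1]$; in particular the endpoint $t=1$, namely $B_1+B_2$, is normal with respect to $\partial V$, which is the first assertion.

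Finally I would set $N(t):=\mathcal M(B_t;\partial V)$, so that $N(0)=\mathcal M(B_1;\partial V)$ and $N(1)=\mathcal M(B_1+B_2;\partial V)$. The integrand $B_t'(\lambda)B_t(\lambda)^{-1}$ depends continuously on $(t,\lambda)\in[0,1]\times\partial V$ and $B_t^{-1}$ is uniformly bounded on $\partial V$, so $t\mapsto N(t)$ is continuous; on the other hand, applying the argument principle to each normal family $B_t$ gives $N(t)=\sum_{\mu\in V}\big(N_\mu(B_t)-N_\mu(B_t^{-1})\big)\in\mathbb Z$. A continuous integer-valued function on $[0,1]$ is constant, so $N(0)=N(1)$, i.e.
\begin{align*}
\sum_{\mu\in V}\big(N_\mu(B_1)-N_\mu(B_1^{-1})\big)=\sum_{\mu\in V}\big(N_{\mu}(B_1+B_2)-N_{\mu}((B_1+B_2)^{-1})\big),
\end{align*}
which is the claimed multiplicity identity. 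The main obstacle is the verification, in the preceding paragraph, that $B_t$ remains a meromorphic family of \emph{Fredholm} operators of index $0$ throughout the homotopy — and in particular that $B_1+B_2$ is normal with respect to $\partial V$: this is where the precise structure of $B_2$ (finite-rank principal parts and Fredholm holomorphic part) and its compatibility with the Gohberg--Sigal factorization of $B_1$ must be used, followed by the analytic Fredholm alternative to control the non-invertible set inside $V$; the remaining ingredients are a standard residue computation and a continuity-plus-integrality argument.
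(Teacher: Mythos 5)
The paper itself does not prove Theorem \ref{th:a2}: it is stated as a known result of Gohberg--Sigal theory and is cited from the references \cite{AK-09,DM}, so there is no internal proof to compare against. Your sketch follows the standard route used in those references — generalized argument principle plus a homotopy $B_t=B_1+tB_2$ plus continuity-integrality — so the overall structure is the right one and the conclusion is reached by essentially the classical argument.

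Two remarks on precision. First, in the residue computation the phrase ``the contributions of $G_1$ and $G_2$ become holomorphic after cycling'' is too quick for the $G_2$ piece: writing $H(\lambda)=G_2'(\lambda)G_2(\lambda)^{-1}$, the operator $G_1 D H D^{-1} G_1^{-1}$ is genuinely singular near $\mu$ (it contains terms like $(\lambda-\mu)^{k_l-k_j}\mathcal P_l H\mathcal P_j$ and $(\lambda-\mu)^{-k_j}\mathcal P_0 H\mathcal P_j$), and one cannot cycle the trace pointwise since the factors are not trace class. The correct argument is to take the residue first — which is finite rank because every singular term carries at least one finite-rank $\mathcal P_j$ with $j\ge 1$ — and then apply cyclicity to the finite-rank residue, after which orthogonality $\mathcal P_j\mathcal P_l=\delta_{jl}\mathcal P_l$ and $\mathcal P_j\mathcal P_0=0$ forces the trace of that residue to vanish. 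Your conclusion is right, but the justification as written would not survive scrutiny. Second, the step you flag yourself as ``the main obstacle'' — showing that each $B_t$ is a finitely meromorphic family of Fredholm operators of index $0$ on $V$, hence normal with respect to $\partial V$ once invertibility on $\partial V$ is established — is indeed the substantive technical point and is only gestured at; in the literature this is where one uses that the principal parts of $B_1$ and $B_2$ are finite rank (so their sum is finitely meromorphic with finite-rank principal parts) together with the factorization $B_t=B_1(\mathbb I+tB_1^{-1}B_2)$ near $\partial V$ and the meromorphic Fredholm alternative on $V$. A complete write-up would need to carry this out; as a sketch, you have identified the right ingredients but have not closed the gap.
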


\refstepcounter{subsection}
\subsection*{\thesubsection\quad Proofs of Lemmas \ref{le:1} and \ref{le:12}, and inequality (\ref{eq:131})}\label{sec:a2}

\refstepcounter{subsubsection}
\subsubsection*{\thesubsubsection\quad Proof of Lemma \ref{le:1}} \label{sec:a21}

\begin{proof}[Proof of Lemma \ref{le:1}]
Introducing two bilinear forms
\begin{align*}
&b_1(\phi_1, \phi_2) = \langle \phi_1, \phi_2 \rangle_{L^2(\Omega)} + \langle \nabla \phi_1, {\nabla \phi_2}\rangle_{\mathbb L^2(\Omega)} \quad \textrm{for}\; \phi_1, \phi_2 \in H^1(\Omega),\\
&\textrm{and}\; b_2(\phi_1,\phi_2) = \langle \phi_1, \phi_2\rangle_{L^2(\Omega)}, \quad \textrm{for}\; \phi_1 \in L^2(\Omega)\; \textrm{and}\; \phi_2 \in H^1(\Omega).
\end{align*}
By Riesz representation theorem, we obtain that
\begin{align*}
& b_1(\phi_1, \phi_2) = \left(\mathcal B_1 \phi_1, \phi_2 \right)_{H^1(\Omega)} \quad \textrm{for}\; \phi_1, \phi_2 \in H^1(\Omega),\\
&\textrm{and}\quad b_2(\phi_1 , \phi_2) = \left(\mathcal B_2 \phi_1, \phi_2\right)_{H^1(\Omega)} \quad \textrm{for}\; \phi_1 \in L^2(\Omega)\; \textrm{and}\; \phi_2 \in H^1(\Omega).
\end{align*}
Here, $\mathcal B_1 \in \mathcal L(H^1(\Omega))$ and $\mathcal B_2 \in  \mathcal L(L^2(\Omega), H^1(\Omega))$. It easily follows from Lax-Milgram theorem that $\mathcal B_1$ is invertible from $H^1(\Omega)$ to $H^1(\Omega)$. Then, due to the regularity assumption of $\Gamma$, it can be seen that for each $\lambda \in \mathbb C \backslash \{0\}$, $\lambda \mathbb I + \mathcal B^{-1}_1 \mathcal B_2$ is a Fredholm operator with index $0$ in $\mathcal L(L^2(\Omega))$. We note that $\lambda \mathbb I + \mathcal B^{-1}_1 \mathcal B_2$ is invertible if and only if $-\lambda \in \R_+$ is an eigenvalue of $\mathcal B^{-1}_1 \mathcal B_2$. Furthermore, it can be seen that 
\begin{align*}
\langle \mathcal B^{-1}_1 \mathcal B_2 \phi_1, \phi_2 \rangle_{L^2(\Omega)} = \langle \mathcal B^{-1}_1 \mathcal B_2 \phi_1, \mathcal B_2 \phi_2\rangle_{H^1(\Omega)} = \langle \mathcal B_2 \phi_1, \mathcal B^{-1}_1 \mathcal B_2 \phi_2\rangle_{H^1(\Omega)} = \langle \phi_1, \mathcal B^{-1}_1 \mathcal B_2 \phi_2\rangle_{L^2(\Omega)}.
\end{align*}
Therefore, when $-\lambda$
is an eigenvalue of $\mathcal B^{-1}_1 \mathcal B_2$, we have that 
\begin{align} \label{eq:99}
\lambda \mathbb I + \mathcal B^{-1}_1 \mathcal B_2 + \mathcal P_{\textrm{Ker}(\lambda\mathbb I + \mathcal B_1^{-1}\mathcal B_2)}\; \textrm{is invertible in} \; \mathcal L(L^2(\Omega)).
\end{align}
Here, $\mathcal P_{\textrm{Ker}(\lambda\mathbb I + \mathcal B_1^{-1} \mathcal B_2)}$ is an orthogonal projection onto ${\textrm{Ker}(\lambda\mathbb I + \mathcal B_1^{-1}\mathcal B_2)}$. 

On the other hand, for any $\phi_1, \phi_2 \in H^1(\Omega)$, we easily find
\begin{align*}
-\langle \nabla \phi_1, {\nabla \phi_2}\rangle_{\mathbb L^2(\Omega)} + \kappa^2 \langle \phi_1 , \phi_2 \rangle_{L^2(\Omega)} 
&= - \left[b_1(\phi_1 , \phi_2) - (1 + \kappa^2) b_2(\phi_1,\phi_2)\right], \\
& = - \left[\left(\mathcal B_1 \phi_1, \phi_2\right)_{H^1(\Omega)} - (1 + \kappa^2) (\mathcal B_2\phi_1, \phi_2)_{H^1(\Omega)}\right].
\end{align*}
This implies that each Neumann eigenvalue $\kappa$ is a point at which $1/(1+\kappa^2)$ is the eigenvalue of $\mathcal B_1^{-1} \mathcal B_2$. Therefore, we arrive at 
\begin{align*}
 a_{z_0}(\phi_1, \phi_2) &= -\left(\mathcal B_1 \phi_1, \phi_2\right)_{H^1(\Omega)} +  \left(1 + \frac{z^2_0}{c^2_1}\right) (\mathcal B_2\phi_1,\phi_2)_{H^1(\Omega)} \notag\\
&+ \left(\mathcal P_{{\rm{Ker}}((1+z_0^2/c^2_1)^{-1}\mathbb I + \mathcal B_1^{-1} \mathcal B_2)} \phi_1, \phi_2 \right)_{L^2(\Omega)}.
\end{align*}
Then, \eqref{eq:71} can be equivalently represented by 
\begin{align*}
\left[\mathbb I - \left(1 + \frac{z^2_0}{c^2_1}\right) B_1^{-1}\mathcal B_2 + \mathcal B_1^{-1} \mathcal B_2 \mathcal P_{{\rm{Ker}}((1+z_0^2/c^2_1)^{-1}\mathbb I + \mathcal B_1^{-1} \mathcal B_2)}\right] v^f = B_1^{-1}f.
\end{align*}
Combining this and \eqref{eq:99} yields the solvability of \eqref{eq:71}.

Moreover, from \eqref{eq:20}, we observe that the sesquilinear form $a^{\rm{dom}}_{z,\tau}$ is subject to a small analytic perturbation of $a_{z_0}$ in terms of $\tau$ and $z-z_0$, represented by $a^{\rm{res}}_{z,\tau}$. Therefore, by using the Born series inversion method and applying statement \eqref{a1} of this lemma, we obtain the assertion of statement \eqref{a2} of this lemma. 
\end{proof}

\refstepcounter{subsubsection}
\subsubsection*{\thesubsubsection\quad Proof of Lemma \ref{le:12}} \label{sec:a22}

\begin{proof}[Proof of Lemma \ref{le:12}]
\eqref{z1} 
A straightforward calculation gives
\begin{align}
&\int_{\R^3}\frac{e^{i(\omega\vep^{-1} + \omega\vep^{\alpha -1})|y_0 + \vep(x-y_0)- y|}}{|y_0 + \vep(x-y_0) - y|} f(y)dy \notag\\
&= \left\{\int_{U_1(\vep)}+ \int_{U_2(\vep^\sigma)} + \int_{U_3(\vep^\sigma)}\right\} \frac{e^{i(\omega\vep^{-1} + \omega\vep^{\alpha -1})|y_0 + \vep(x-y_0)- y|}}{|y_0 + \vep(x-y_0) - y|} f(y)dy =: g_1(x) + g_2(x) + g_3(x), \label{eq:103}
\end{align}
where $\sigma \in (0,1)$, $U_1(\vep)$, $U_2(\vep^\sigma)$ and $U_3(\vep^\sigma)$ are defined by 
\begin{align}
&U_1(\vep):=\{y \in \R^3: |y-y_0| \le 2\vep \max_{x\in\overline \Omega}|x-y_0|\}, \notag \\
&U_2(\vep^\sigma):=\{y \in \R^3: 2\vep \max_{x\in\overline \Omega}|x-y_0| <|y-y_0| <\vep^\sigma\}, \notag\\
&U_3(\vep^\sigma):=\{y \in \R^3: |y-y_0| \ge \vep^\sigma)\}. \notag
\end{align}

For the estimate of $g_1$, we observe that 
\begin{align*}
g_1(x) = \frac{1}{\vep} \int_{U_1(\vep)}\frac{e^{i(\omega + \omega\vep^{\alpha})|(x-y_0) -(y-y_0)/\vep|}}{|(x-y_0) - (y-y_0)/\vep|} f(y)dy \\
= \vep^2 \int_{(U_1(\vep)-y_0)/\vep}\frac{e^{i(\omega + \omega\vep^{\alpha})|x-y|}}{|x-y|}f(y_0 + \vep(y-y_0))dy 
\end{align*}
Thus, we have 
\begin{align}
\|g_1\|_{H^1(\Omega)} \le \vep^{1/2} \|f\|_{L_{\beta}^2(\R^3)}. \label{eq:153}
\end{align} 
In the sequel, we estimate $g_2$ and $g_3$. It is easy to verify that 
\begin{align} \label{eq:104}
& e^{i(\omega\vep^{-1} + \omega\vep^{\alpha -1}) a} = e^{i\omega\vep^{-1}a}\left(1 + i\omega a \vep^{\alpha}\sum^{\infty}_{l=1}\frac{1}{l!}\left(i\omega \vep^{\alpha}a\right)^{l-1}\right), \quad a \in \R, 
\end{align}
and that 
\begin{align} 
& |y_0 + \vep(x-y_0) - y| = |y_0-y|\sqrt{1 + 2\vep \frac{(x-y_0)\cdot (y_0 - y)}{|y_0-y|^2} + {\frac{\vep^2|x-y_0|^2}{|y_0-y|^2}}}, \label{eq:107}\\
& {\left|\frac{y-y_0}\vep + (y_0-x)\right|} = \frac{|y_0-y|}{\vep}\sqrt{1 + 2\vep\frac{(y_0-x)\cdot(y-y_0)}{|y_0-y|^2} + {\frac{\vep^2|y_0 - x|^2}{|y_0-y|^2}}}. \label{eq:144}
\end{align}
Using \eqref{eq:107} and applying Cauchy-Schwartz inequality, we find
\begin{align}
|g_2(x)| &\le C\int_{U_2(\vep^\sigma)}\frac 1 {|y_0-y|}|f(y)|dy \notag \\
&\le C\vep^{\frac{3\sigma(q-2)}{2q}} \||y_0-y|^{-1}\|_{L^q(\R^3)}\|f\|_{L_{\beta}^2(\R^3)} \le C\vep^{\frac{\sigma}2}\ln\vep\|f\|_{L_{\beta}^2(\R^3)}. \label{eq:154} 
\end{align}
Here, the last inequality is obtained by setting $q = 3$.
Similarly, we obtain
\begin{align}
|\nabla g_2(x)| \le C \int_{U_2(\vep^\sigma)}\frac 1 {|y_0-y|}|f(y)|dy \le C\vep^{\frac{\sigma}2}\ln\vep\|f\|_{L_{\beta}^2(\R^3)}, \quad x\in \Omega. \label{eq:155}
\end{align}
Moreover, with the aid of \eqref{eq:104}, \eqref{eq:107},\eqref{eq:144} and the inequality
\begin{align*}
 \int_{\R^3} \frac{1}{|y_0-y|} |f(y)|dy \le C\|f\|_{L^2_{\beta}(\R^3)},   
\end{align*}
we arrive at 
\begin{align} \label{eq:156}
\left\| g_3(x) - \int_{\R^3} \frac{e^{i{\omega}|y_0-y|/\vep}}{4\pi|y_0-y|}e^{i{\omega}\hat y_{y_0}\cdot(y_0-x)}f(y) dy \right\|_{H^1(\Omega)} \le \vep^{\min(\alpha -1,1-\sigma)}\|f\|_{L_{\beta}^2(\R^3)}.
\end{align}
Setting $\sigma = 2/3$
and combining \eqref{eq:103} \eqref{eq:153}, \eqref{eq:154}, \eqref{eq:155} and \eqref{eq:156}, we obtain \eqref{eq:110}.

\eqref{z2} For each $\phi\in L^2(\Omega)$ and $f \in L_\beta^2(\R^3)$, a straightforward calculation yields that
\begin{align}
&\int_{\R^3}\int_{\Omega}\frac{e^{i(\omega + \omega\vep^{\alpha})|y_0+ \vep^{-1}(x-y_0)- y|}}{4\pi|y_0+ \vep^{-1}(x-y_0)- y|}\phi(y) f(x) dy dx \notag\\
&= \vep \int_{\Omega}\int_{\R^3}\frac{e^{i(\omega\vep^{-1} + \omega\vep^{\alpha-1})|(x-y_0) + \vep(y_0-y)}|}{4\pi|(x-y_0) + \vep(y_0-y)|} f(x) \phi(y)dx dy \label{eq:106}
\end{align}
and 
\begin{align}
&\int_{\R^3} \int_\Omega \frac{e^{i{\omega}|x-y_0|/\vep}}{4\pi|x-y_0|}e^{i\omega\hat x_{y_0}\cdot(y_0-y)}\phi(y) f(x) dy dx \notag \\
&= \int_{\Omega}\int_{\R^3}\frac{e^{i{\omega}|x-y_0|/\vep}}{4\pi|x-y_0|}e^{i\omega\hat x_{y_0}\cdot(y_0-y)} f(x) \phi(y) dx dy. \notag
\end{align}
This, together with \eqref{eq:110} and \eqref{eq:106} yields \eqref{eq:105}. Furthermore, \eqref{eq:108} can be derived by proceeding as in the derivation of \eqref{eq:105}.
\end{proof}

\refstepcounter{subsubsection}
\subsubsection*{\thesubsubsection\quad Proof of inequality (\ref{eq:131})} \label{sec:a23}

\begin{proof}[Proof of inequality \eqref{eq:131}]
Given $\lambda \in \mathbb C_+$, consider a sesqulinear form 
\begin{align*}
b_\lambda(h_1,h_2):= \overline \lambda\int_{\R^3}\frac{\lambda^2}{k_{\tau,\vep}(x)}(h_1\overline{h_2})(x)dx + \overline \lambda \frac{1}{\rho_{\tau,\vep}(x)}(\nabla h_1\cdot \nabla \overline{h_2})(x) dx, \quad h_1,h_2\in H^1(\R^3).
\end{align*}
Clearly, $b_\lambda$ is coercive in $H^1(\R^3)$. Therefore, given $\lambda \in \Gamma_{\sigma}$. For any $f \in L^2(\R^3)$, there exists a unique $h^f(x,\lambda)$ satisfying
\begin{align}\label{eq:123}
-\frac{\lambda^2}{k_{\tau,\vep}(x)} h^f(x,\lambda) - \nabla \cdot \frac{1}{\rho_{\tau,\vep}(x)} \nabla h^f(x,\lambda) = f(x), \quad x\in \R^3.
\end{align}
We note that $ h^f = R_{H_{\tau,\vep}}(\lambda) k_{\tau,\vep} f$. It follows from \eqref{eq:123} that 
\begin{align}
\overline{\lambda}\int_{\R^3}\frac{|\lambda|^{2}}{k_{\tau,\vep}(x)}|h^f(x,\lambda)|^2 & + \overline \lambda \frac{1}{\rho_{\tau,\vep}(x)}\left|\nabla h^f(x,\lambda)\right|^2 dx = \int_{\R^3} \overline \lambda f(x) \overline{h^f(x,\lambda)}dx, \notag
\end{align}
whence \eqref{eq:131} follows.
\end{proof}

\refstepcounter{subsection}
\subsection*{\thesubsection\quad Fourier-Laplace transform}

For any $\varphi \in C(\R; L_{\textrm{comp}}^2(\R^3))$, its Fourier-Laplace transform  $\hat \varphi$ is defined by
\begin{align} \label{eq:125}
\left(\mathcal F \varphi(x, \cdot) \right)(s) := \hat \varphi (x,s)  = \int^{+\infty}_{0} e^{-st} \varphi(x,t) dt, \quad s \in \{z\in \CC: \textrm{Re}(z) > 0\}, 
\end{align}
It is well known that 
\begin{align} 
\Theta(t)\varphi(x,t) = \frac{1}{2\pi}\int_{-\infty}^{\infty} e^{(\textrm{Re}(s)+i\xi)t} \hat \varphi(x,\textrm{Re}(s) + i\xi) d\xi. \notag
\end{align}
Here, $\Theta(t)$ is a heaviside function, which is defined as
\begin{align}
    \Theta(t):= 
    \begin{cases}
    0, & t<0,\\
    1, & t\ge 0.
    \end{cases} \notag
\end{align}

\end{appendices}

\section*{Acknowledgment}
This work is supported by the Austrian Science Fund (FWF) grant P: 36942.

\section*{Disclosure Statement}
The authors declare no conflict of interest.

\end{document}